\theoremstyle{plain}
\newtheorem{theorem}{Theorem}
\newtheorem{prop}[theorem]{Proposition}
\newtheorem{proposition}[theorem]{Proposition}
\newtheorem{cor}[theorem]{Corollary}
\newtheorem{corollary}[theorem]{Corollary}
\newtheorem{lemma}[theorem]{Lemma}
\newtheorem*{conj}{Open Problem}
\theoremstyle{definition}
\theoremstyle{remark}
\newtheorem{rmk}[theorem]{Remark}
\DeclareMathOperator{\tr}{\mathrm{trace}}
\newcommand{\R}{\mathbb{R}}					
\newcommand{\N}{\mathbb{N}}					
\newcommand{\distr}{D}			
\renewcommand{\theta}{\vartheta}			
\renewcommand{\epsilon}{\varepsilon}		
\newcommand{\eps}{{\varepsilon}}
\title[Abnormal minimizers for rank 2 sub-Riemannian structures]{On the regularity of abnormal minimizers for rank 2 sub-Riemannian structures}
\date{\today}
\author{D.\ Barilari}
\address{Univ.\ Paris Diderot, Institut de Math\'ematiques de Jussieu-Paris Rive Gauche, CNRS, Sorbonne Universit\'e.
B\^atiment Sophie-Germain, case 7012, 75205 Paris cedex 13, France}
\email{davide.barilari@imj-prg.fr}
\author{Y.\ Chitour}
\address{Universit\'e Paris-Sud, L2S, CentraleSup\'elec, Universit\'e
Paris-Saclay, Gif-sur-Yvette, France}
{\email{yacine.chitour@l2s.centralesupelec.fr}
\author{F.\ Jean}
\address{Unit\'e de Math\'ematiques Appliqu\'ees, ENSTA ParisTech, Universit\'e
Paris-Saclay, 91120 Palaiseau, France}
\email{frederic.jean@ensta-paristech.fr}
\author{D.\ Prandi}
\address{CRNS, L2S, CentraleSup\'elec, Universit\'e
Paris-Saclay, Gif-sur-Yvette, France}
\email{dario.prandi@l2s.centralesupelec.fr}
\author{M.\ Sigalotti}
\address{Inria \& Laboratoire Jacques-Louis Lions, Sorbonne Universit\'e,
75005 Paris, France}
\email{mario.sigalotti@inria.fr}

\begin{document}

\setstcolor{red}

\begin{abstract}
We prove the $C^{1}$ regularity for a class of abnormal length-minimizers in rank 2  sub-Riemannian structures.
As a consequence of our result, all length-minimizers for rank 2 sub-Riemannian structures of step up to 4 are of class $C^{1}$.
\end{abstract}

\maketitle
\setcounter{tocdepth}{1}
 \tableofcontents

\section{Introduction}

The question of regularity of length-minimizers is one of the main open problems in sub-Riemannian geometry, cf.\ for instance \cite[Problem 10.1]{montgomerybook} or \cite[Problem II]{AAA14} and the survey \cite{Monti14}.

Length-minimizers are solutions to a variational problem with constraints and satisfy a first-order necessary condition  resulting from  the Pontryagin Maximum Principle. With every length-minimizer $\gamma:[0,T]\to M$ we can associate a lift $\lambda:[0,T]\to T^{*}M$ in the cotangent space, satisfying a Hamiltonian equation. This lift can be either \emph{normal} or \emph{abnormal}, although  a length-minimizer $\gamma$ can actually admit several lifts,  each of them being either normal or  abnormal.

If a length-minimizer admits a normal lift, then it is smooth, i.e., $C^{\infty}$, since normal  lifts are solutions of smooth autonomous Hamiltonian systems in $T^{*}M$.
Note that we assume length-minimizers to be parametrized by arclength and  their regularity is meant with respect to this time parametrization.
The question of regularity is then reduced to length-minimizers that are strictly abnormal, i.e., those  which do not admit normal lifts. For such length-minimizers, from the first order necessary condition (and actually from the second order one as well) it is a priori not possible to deduce any regularity other than Lipschitz continuity.

In this paper we investigate the following.
\begin{conj}  \label{conjecture}
Are all
length-minimizers in a sub-Riemannian manifold  of class $C^{1}$?
\end{conj}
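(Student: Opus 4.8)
The statement is an open problem, so what follows is a \emph{program} rather than a proof: a strategy for establishing the affirmative answer, together with the point at which it currently breaks down. The plan is to \textbf{reduce}, \textbf{blow up}, and \textbf{induct}.

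First, reduce to strictly abnormal length-minimizers. As recalled above, any length-minimizer admitting a normal Pontryagin lift is $C^{\infty}$, so it suffices to treat a curve $\gamma:[0,T]\to M$, parametrized by arclength, all of whose lifts $\lambda:[0,T]\to T^{*}M$ are abnormal. The Pontryagin Maximum Principle together with the Goh condition (the second-order necessary condition of Agrachev--Sarychev, valid when the corank is one) constrain $\lambda$ to lie in a subvariety of $T^{*}M$ cut out by the vanishing of the Hamiltonians associated with the bracket-generating vector fields up to some order. Generically along $\gamma$ these constraints determine the control uniquely as a \emph{smooth} function of $\lambda$; then $\gamma$ solves a smooth autonomous ODE and is $C^{\infty}$ there. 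Regularity can therefore only fail on the closed ``degenerate'' set $\Sigma\subset[0,T]$ where the rank of the relevant differential drops — this is exactly the phenomenon isolated, in the rank $2$ step $\le 4$ case, in the present paper.

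Second, attack $\Sigma$ by blow-up and nilpotent approximation. Fix $t_{0}\in\Sigma$ and rescale $\gamma$ by the sub-Riemannian dilations centred at $\gamma(t_{0})$. By Mitchell--Margulis-type convergence the rescaled curves subconverge, after reparametrization, to a length-minimizer of the nilpotent approximation $\widehat{\mathcal{D}}$ at $\gamma(t_{0})$, and this tangent minimizer is again abnormal (indeed a Goh curve). By the theorem of Hakavuori--Le Donne, this tangent curve has no corner. The crux is to upgrade ``no corner'' to ``is a straight line'': that would give $\gamma$ a genuine tangent line at $t_{0}$. One would then still need a quantitative step — an almost-monotonicity formula for an ``excess'' functional along the minimizer, in the spirit of geometric measure theory — to propagate the pointwise existence and the uniqueness of tangent lines into actual $C^{1}$ regularity, i.e.\ continuity of $t_{0}\mapsto\dot\gamma(t_{0})$ on all of $[0,T]$.

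Third, organize the argument as an induction on the step of the structure (or, equivalently, on a suitable complexity combining step and corank). The nilpotent minimizers arising in the blow-up live in a homogeneous structure whose complexity is no larger — and on the singular stratum strictly smaller — than that of the original, so the inductive hypothesis can be fed back in; the base cases (step $\le 2$ trivially, step $\le 3$ by Leonardi--Monti, rank $2$ step $\le 4$ by the present paper) are known. The \textbf{main obstacle}, and the reason the problem remains open, is precisely the upgrade in the second step from ``no corners'' to ``straight tangent'' for abnormal minimizers of an arbitrary nilpotent structure: there is at present no coercivity or monotonicity mechanism that survives the absence of a smooth Hamiltonian flow. A secondary but serious difficulty is higher corank, where the Goh condition is much weaker, the degenerate set $\Sigma$ may fail to be small, and the reduction of the first step breaks down; handling it would require working with the entire cone of abnormal lifts and a finer stratification of $T^{*}M$.
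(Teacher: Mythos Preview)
You correctly recognise that the statement is an open problem, and indeed the paper offers no proof of it in full generality: its contribution is the partial result Theorem~\ref{t:main} (rank~$2$, lift avoiding $(\distr^{4})^{\perp}$). So there is no ``paper's proof'' of the conjecture to match; what can be compared is your proposed programme against the strategy the paper actually runs to obtain its partial result.

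Your blow-up step and the appeal to Hakavuori--Le~Donne are genuine ingredients of the paper (Lemma~\ref{lem:nilpot} and Theorem~\ref{t:HLK}). But the heart of the paper is \emph{not} an excess/monotonicity mechanism upgrading ``no corners'' to ``straight tangent'', nor an induction on step. Instead, the paper carries out a direct dynamical analysis of the abnormal Hamiltonian system. In rank~$2$ the constraint $h_{12}\equiv 0$ and its derivative force $u=\pm h/|h|$ with $h=(-h_{212},h_{112})$, so the control is smooth wherever $h\ne 0$; all potential non-$C^{1}$ behaviour is concentrated at the zeros of $h$. There the paper studies the planar system $\dot h = A\,h/|h|$, where $A$ is the traceless $2\times 2$ matrix of fourth-order brackets, and a case analysis on $\det A$ (Propositions~\ref{prop:nonpos} and~\ref{prop:h/h}, plus Section~\ref{proof-ex-conj} for accumulation points of zeros) shows that $u$ has one-sided limits at every such time. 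Hakavuori--Le~Donne is then invoked only at the very end, to force the two one-sided limits to coincide. This route is concrete and effective in rank~$2$ but is visibly tied to that setting and to the hypothesis $A\ne 0$, i.e., $\lambda\notin(\distr^{4})^{\perp}$; it does not obviously iterate.

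Two small corrections to your programme as stated: the Goh condition is a necessary condition for strictly abnormal length-minimizers without any corank-one assumption (see \cite{AS99} and the remark after \eqref{eq:diff-h}); and smoothness in step~$3$ is currently known only for Carnot groups \cite{LDLMV13,TY13}, not for general step-$3$ structures, so the base of your proposed induction is thinner than you claim.
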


If the sub-Riemannian structure has step 2, there are no strictly abnormal length-minimizers, see e.g.\ \cite{Agrachev-Sarychev,ABB}, thus every length-minimizer admits a normal lift, and is  hence smooth. For step 3 structures, the situation is already more complicated and a positive answer to  the above problem is known only for Carnot groups (where, actually, length-minimizers are proved to be $C^{\infty}$), see \cite{LDLMV13,TY13}.  When the sub-Riemannian structure is analytic, more is known on the size of the set of points where a length-minimizer can lose regularity \cite{Sussmann2014ART}, regardless of the rank and of the step of the distribution.

\medskip
To state our main result, we introduce some notations. We refer the reader to Section~\ref{s:prel} for precise definitions.
Recall that a sub-Riemannian structure $(D,g)$ on $M$ is defined by a bracket generating distribution $\distr$ endowed with a metric $g$. Hence $\distr$ defines a flag of subspaces at every point $x \in M$
$$
\distr_x=\distr^{1}_x\subset \distr^{2}_x \subset \distr^{3}_x\subset \cdots \subset \distr^{r}_x=T_xM,
$$
where
$\distr^i_x$ is the subspace of the tangent space spanned by Lie brackets of length at most $i$ between horizontal vector fields. This induces a dual decreasing sequence of subspaces of $T^{*}_xM$
$$
0=(\distr^{r}_x)^{\perp}\subset \cdots \subset (\distr^{4}_x)^{\perp} \subset (\distr^{3}_x)^{\perp} \subset (\distr^{2}_x)^{\perp} \subset (\distr^{1}_x)^{\perp} \subset T^{*}_xM,
$$
where perpendicularity is considered with respect to the duality product. By construction, any abnormal lift satisfies  $\lambda(t)\in (\distr^{1})^{\perp}$ for every $t$. If the lift is strictly abnormal, then by Goh conditions $\lambda(t)\in (\distr^{2})^{\perp}$ for every $t$.

When the distribution has rank 2, it is known that if $\lambda(t)$ does not cross  $(\distr^{3})^{\perp}$, then the length-minimizer is $C^{\infty}$ \cite[Sect.~6.2, Cor.~4]{liu95}.
Our main result pushes this analysis  further and establishes that the answer to the
Open Problem
is positive
for length-minimizers whose abnormal lift does not enter $(\distr^{4})^{\perp}$.

\begin{theorem}\label{t:main}
Let $(\distr,g)$ be a rank 2 sub-Riemannian structure on $M$. Assume that $\gamma:[0,T]\to M$ is an abnormal minimizer parametrized by arclength. If $\gamma$ admits a lift
satisfying  $\lambda(t)\notin (\distr^{4})^{\perp}$ for every $t \in [0,T]$, then $\gamma$ is  of class $C^1$.
\end{theorem}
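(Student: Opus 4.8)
The plan is to work with the abnormal lift $\lambda$ directly and exploit the rank 2 hypothesis to derive an ODE that controls the control $u(t)$. Fix a local orthonormal frame $X_1,X_2$ for $\distr$, so that $\dot\gamma(t)=u_1(t)X_1+u_2(t)X_2$ with $u_1^2+u_2^2=1$ a.e. (arclength parametrization). Writing $h_i(\lambda)=\langle\lambda,X_i\rangle$ and, more generally, $h_{i_1\cdots i_k}$ for the iterated Poisson brackets/Hamiltonian lifts of iterated Lie brackets, the condition $\lambda(t)\in(\distr^1)^\perp$ reads $h_1=h_2\equiv0$ along $\lambda$, and the Goh condition $\lambda(t)\in(\distr^2)^\perp$ gives $h_{12}\equiv0$. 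Differentiating $h_{12}\equiv0$ along the flow and using that $h_1,h_2$ vanish identically produces a relation of the form $u_1 h_{112}+u_2 h_{212}=0$ (after symmetrization), i.e. the vector $(h_{112},h_{212})$ evaluated along $\lambda$ is orthogonal to $u=(u_1,u_2)$. The hypothesis $\lambda(t)\notin(\distr^4)^\perp$ means precisely that not all third-order coordinates $h_{ijk}$ vanish at any time, and one shows — this is where the step $\le 3$ regularity of \cite{liu95} and the rank 2 structure enter — that in fact $(h_{112},h_{212})$ is nonvanishing along the minimizer, so it determines the direction of $u$ up to sign:
\begin{equation*}
u(t) = \pm \frac{(h_{212}(\lambda(t)),-h_{112}(\lambda(t)))}{\sqrt{h_{112}(\lambda(t))^2+h_{212}(\lambda(t))^2}}.
\end{equation*}

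Next I would upgrade this pointwise identity to a regularity statement. The right-hand side above is a continuous (indeed smooth) function of $\lambda(t)$, and $\lambda$ is absolutely continuous, so once we know $\lambda$ is $C^1$ we get $u$ continuous, hence $\gamma\in C^1$. To close the loop, one differentiates the Hamiltonian system for $\lambda$: the abnormal lift solves $\dot\lambda = u_1(t)\vec h_1(\lambda) + u_2(t)\vec h_2(\lambda)$, and substituting the formula for $u$ gives a closed (though only a priori measurable-in-$t$) differential equation $\dot\lambda = F(\lambda)$ with $F$ a bounded measurable vector field that is actually a continuous function of $\lambda$ away from the bad set. Since $\lambda$ stays in the region where $(h_{112},h_{212})\neq0$, $F$ is continuous there, so $\lambda$ is a $C^1$ solution of an ODE with continuous right-hand side; bootstrapping once more gives $u\in C^1$ and in fact $\gamma\in C^\infty$ on the open set where this argument applies, but $C^1$ globally is what we claim.

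The subtle points are, first, establishing that $(h_{112},h_{212})$ truly does not vanish along $\lambda$: a priori the hypothesis only forbids $\lambda\in(\distr^4)^\perp$, i.e. it forbids \emph{all} four independent third-order brackets from being annihilated, but the Goh-type differentiated identity only involves the two specific combinations $h_{112},h_{212}$. One must argue that the remaining third-order coordinate (essentially $h_{1112}$ or a linear combination) cannot carry all the "mass" while $h_{112}=h_{212}=0$, since if $h_{112}=h_{212}=0$ on a set of positive measure a further differentiation forces more brackets to vanish, and a Warner-type / analyticity-free propagation argument (or the structure equations in the rank 2 case, cf.\ the normal form used in \cite{liu95}) shows this would push $\lambda$ into $(\distr^4)^\perp$, contradicting the hypothesis. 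Handling the measure-zero set where $h_{112}=h_{212}=0$ without there being an interval of such times — so that $u$ is still determined a.e.\ and the ODE argument goes through — is the main obstacle, and I expect it to require a careful case analysis of the order of vanishing of the $h_{ijk}$'s, together with a localization argument reducing to a neighborhood of a point where the worst degeneracy occurs. The second, more routine point is checking that the differentiated Goh relation has exactly the stated form $u_1 h_{112}+u_2 h_{212}=0$ and not an inhomogeneous one; this is a direct computation using $h_1=h_2=h_{12}=0$ and the Jacobi identity, and the vanishing of $h_1,h_2$ kills all the terms that would otherwise spoil homogeneity.
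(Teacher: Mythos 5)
There is a genuine gap, and it is located exactly where you flagged the ``subtle point.'' The hypothesis $\lambda(t)\notin(\distr^{4})^{\perp}$ does \emph{not} imply that $(h_{112},h_{212})$ is nonvanishing along $\lambda$. It only says that $\lambda(t)$ does not annihilate \emph{all} of $\distr^{4}$; it is perfectly consistent for the two third-order functions $h_{112}$ and $h_{212}$ to vanish simultaneously at some $t_0$ (i.e., $\lambda(t_0)\in(\distr^{3})^{\perp}$) while a fourth-order quantity such as $h_{1112}$ stays nonzero. What the paper's Lemma~\ref{l:dis} extracts from the hypothesis is only that, at a time where $h:=(-h_{212},h_{112})$ vanishes, the $2\times2$ matrix $A$ of fourth-order brackets does not vanish. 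Your ``Warner-type propagation'' claim --- that $h\equiv0$ on a set of positive measure forces $\lambda\in(\distr^{4})^{\perp}$ --- is false: if $h\equiv0$ on an interval, differentiating only gives $A(t)u(t)=0$, i.e., $u(t)\in\ker A(t)$, which is a one-dimensional kernel (not a contradiction), and this is exactly one of the admissible cases handled in Section~\ref{proof-ex-conj} of the paper.

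Once this is noticed, your argument collapses to the already-known result of \cite{liu95} (the case where $\lambda$ never meets $(\distr^{3})^{\perp}$, so that $u=\pm h/|h|$ is a continuous function of $\lambda$). The genuine content of Theorem~\ref{t:main} is precisely the case where $h$ does vanish somewhere, and there the ODE $\dot h=A\,h/|h|$ has a nonremovable singularity: the right-hand side is not continuous at $h=0$, so you cannot ``bootstrap an ODE with continuous RHS'' as you propose. The paper's strategy is instead to show that near such a time $t_1$, the control $u(t)=\pm h(t)/|h(t)|$ admits one-sided limits which are eigenvectors of $A(t_1)$ (Propositions~\ref{prop:nonpos} and \ref{prop:h/h}, using a time-rescaling, a sign constraint on $\det A(t_1)$, and a delicate polar-coordinates asymptotic analysis split by the sign of $\det A(t_1)$), and then to rule out a jump in $u$ by blowing up at $t_1$ (nilpotentization, Lemma~\ref{lem:nilpot}) and invoking the no-corner theorem of Hakavuori--Le Donne (Theorem~\ref{t:HLK}). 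None of this machinery appears in your proposal, and without it the statement for step $4$ does not follow.
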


If the sub-Riemannian manifold has rank $2$ and step at most $4$, the assumption in Theorem \ref{t:main} is trivially satisfied by every abnormal minimizer $\gamma$ and we immediately obtain the following corollary.
\begin{corollary}\label{cor:step4}
Assume that the sub-Riemannian structure has rank $2$ and step at most $4$. Then all length-minimizers are of class $C^1$.
\end{corollary}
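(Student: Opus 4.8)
The Corollary is immediate from Theorem~\ref{t:main}: when the step is at most $4$ one has $(\distr^{4}_{x})^{\perp}=\{0\}$ at every $x\in M$, so a nonzero abnormal lift never meets $(\distr^{4})^{\perp}$; since, moreover, a length-minimizer admitting a normal lift is $C^{\infty}$ and every length-minimizer admits at least one lift, all length-minimizers are then $C^{1}$. So the plan is to prove Theorem~\ref{t:main}, as follows. First I would note that, by the argument of \cite[Sect.~6.2, Cor.~4]{liu95}, $\gamma$ is $C^{\infty}$ on the open set $\{t:\lambda(t)\notin(\distr^{3})^{\perp}\}$, so it suffices to prove $C^{1}$-regularity near a time $t_{0}$ with $\lambda(t_{0})\in(\distr^{3})^{\perp}\setminus(\distr^{4})^{\perp}$. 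Fix a $g$-orthonormal local frame $X_{1},X_{2}$ of $\distr$ near $\gamma(t_{0})$, write the arclength control in polar form $\dot\gamma=\cos\theta(t)\,X_{1}(\gamma)+\sin\theta(t)\,X_{2}(\gamma)$, and put $u=(\cos\theta,\sin\theta)$, $u^{\perp}=(-\sin\theta,\cos\theta)$, $X_{u}=\cos\theta\,X_{1}+\sin\theta\,X_{2}$, $Z=[X_{1},X_{2}]$, and $h_{Y}(t)=\langle\lambda(t),Y(\gamma(t))\rangle$ for a vector field $Y$, so that $\tfrac{d}{dt}h_{Y}=h_{[X_{u},Y]}$ along the extremal. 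Abnormality forces $h_{X_{1}}\equiv h_{X_{2}}\equiv0$, and differentiating these and using $\cos^{2}\theta+\sin^{2}\theta=1$ forces the Goh-type identity $h_{Z}\equiv0$ (automatic in rank $2$). Set $\phi=(\phi_{1},\phi_{2})$ with $\phi_{i}=h_{[X_{i},Z]}$, and let $S$ be the symmetric $2\times2$ matrix with $S_{11}=h_{[X_{1},[X_{1},Z]]}$, $S_{12}=h_{[X_{1},[X_{2},Z]]}=h_{[X_{2},[X_{1},Z]]}$ (Jacobi identity), $S_{22}=h_{[X_{2},[X_{2},Z]]}$. Since $\distr^{3}$, resp.\ $\distr^{4}$, is spanned by the iterated brackets of $X_{1},X_{2}$ of length at most $3$, resp.\ $4$, the condition $\lambda(t_{0})\in(\distr^{3})^{\perp}\setminus(\distr^{4})^{\perp}$ means exactly $\phi(t_{0})=0$ and $S(t_{0})\neq0$; by continuity $S(t)\neq0$ for $t$ near $t_{0}$.

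Next I would extract a reduced planar system. Differentiating $h_{Z}\equiv0$ gives $\langle u,\phi\rangle\equiv0$, hence $\phi=\rho\,u^{\perp}$ for a Lipschitz function $\rho$, and $C:=\{t:\lambda(t)\in(\distr^{3})^{\perp}\}=\{\rho=0\}$. On the other hand $\dot\phi=Su$ (viewing $\phi$ and $u$ as columns); substituting $\phi=\rho u^{\perp}$ and projecting onto $u$ and $u^{\perp}$ yields, on $[0,T]\setminus C$,
\begin{equation*}
\rho\,\dot\theta=-Q_{t}(\theta),\qquad \dot\rho=\tfrac{1}{2}\,\partial_{\theta}Q_{t}(\theta),
\end{equation*}
where $Q_{t}(\theta)=\langle u,Su\rangle=S_{11}\cos^{2}\theta+2S_{12}\sin\theta\cos\theta+S_{22}\sin^{2}\theta$ is a quadratic form in $u$, nonzero for $t$ near $t_{0}$. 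On the interior of $C$ one instead has $Su=0$ with $|u|=1$, which forces $\rank S\equiv1$ and $u$ to span $\ker S$; hence $\gamma$ is $C^{\infty}$ there by the usual bootstrap, and the only remaining difficulty is at the boundary of $C$, near $t_{0}$.

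The key point would be the almost conservation law $\tfrac{d}{dt}\big(\rho^{2}Q_{t}(\theta)\big)=\rho^{2}\,(\partial_{t}Q_{t})(\theta)$ on $[0,T]\setminus C$, which is a short computation. Writing $K=\rho^{2}Q_{t}(\theta)$ one gets $|\dot K|\le C_{0}\rho^{2}$ near $t_{0}$, while $\rho^{2}=|K|/|Q_{t}(\theta)|$ wherever $Q_{t}(\theta)\neq0$. Since $\rho$, hence $K$, vanishes on $C$, Gronwall's lemma prevents $\theta$ from remaining, on an interval adjacent to $t_{0}$, inside a region where $|Q_{t}(\theta)|$ is bounded below (there $|\dot K|\le C_{1}|K|$ would force $K\equiv0$, i.e.\ $\rho\equiv0$, impossible off $C$). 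Therefore, as $t\to t_{0}^{\pm}$, $\theta(t)$ comes arbitrarily close to the finite zero set of the nonzero form $Q_{t_{0}}$ — in particular that set is nonempty, so $Q_{t_{0}}$ is not definite. Combining this with $\dot\rho=\tfrac12\partial_{\theta}Q_{t}(\theta)$, which is bounded away from $0$ near a simple zero of $Q_{t_{0}}$ (so that $t_{0}$ is then isolated in $C$), and with the sign of $\dot\theta=-Q_{t}(\theta)/\rho$ between consecutive zeros of $Q_{t_{0}}$, I would upgrade this to: the one-sided limits $\theta(t_{0}^{-})$ and $\theta(t_{0}^{+})$ exist and are zeros of $Q_{t_{0}}$.

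Consequently $\dot\gamma(t_{0}^{\pm})=\cos\theta(t_{0}^{\pm})\,X_{1}(\gamma(t_{0}))+\sin\theta(t_{0}^{\pm})\,X_{2}(\gamma(t_{0}))$ both exist. If they were opposite, $\gamma$ would fail to be injective near $t_{0}$, contradicting injectivity of length-minimizers; if they were distinct but not opposite, $\gamma$ would have a corner at $t_{0}$, excluded by its minimality. Hence they coincide, $\dot\gamma$ extends continuously across $t_{0}$, and $\gamma\in C^{1}$ on $[0,T]$. The main obstacle I expect is the third step together with the corner exclusion: handling the degenerate configurations (higher-order vanishing of $\rho$, a rank-one $Q_{t_{0}}$ with a double zero), describing $C$ near its boundary, ruling out oscillation of $\theta$ among the zeros of $Q_{t_{0}}$, and — above all — using the \emph{minimality} of $\gamma$, rather than the mere Pontryagin conditions (which by themselves give only Lipschitz regularity, as recalled in the Introduction), to exclude corners. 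As a guide, the ``frozen'' model in which the length-$\le4$ brackets are constant is integrated exactly by the first integral $\rho^{2}Q(\theta)$, which vanishes along every extremal passing through a point of $(\distr^{3})^{\perp}\setminus(\distr^{4})^{\perp}$ and thereby forces such an extremal to be a straight segment; the work lies in showing that this rigidity survives the lower-order perturbations.
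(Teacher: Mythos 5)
Your deduction of the corollary from Theorem~\ref{t:main} is exactly right and is precisely what the paper does: step at most $4$ means $(\distr^{4}_x)^{\perp}=\{0\}$, so the hypothesis of the theorem is vacuous, and normal minimizers are anyway $C^\infty$. If you had stopped there, the answer would be ``correct, same approach.'' Since you went on to propose a proof of Theorem~\ref{t:main} itself, let me comment on that. Your reduction is genuinely close to the paper's: your $\phi=(h_{112},h_{212})$ and symmetric matrix $S$ correspond, up to a rotation by $J=\begin{psmallmatrix}0&-1\\1&0\end{psmallmatrix}$, to the paper's $h=(-h_{212},h_{112})$ and traceless matrix $A=JS$; your polar system $\rho\dot\theta=-Q_t(\theta)$, $\dot\rho=\tfrac12\partial_\theta Q_t(\theta)$ is \eqref{eq:Ah} written intrinsically; and your almost-conserved quantity $K=\rho^2 Q_t(\theta)$, with $\dot K=\rho^2\partial_t Q_t$, is exactly the mechanism behind Proposition~\ref{prop:nonpos} (there it appears as $e^{Mt}\rho^2 w$). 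Your Gronwall argument correctly shows that $Q_{t_0}$ must be indefinite and that $\theta$ must repeatedly approach $\{Q_{t_0}=0\}$.

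However, there is a genuine gap at exactly the point you flag as ``the main obstacle.'' Knowing that $\theta(t)$ approaches the zero set of $Q_{t_0}$ infinitely often is not the same as knowing $\theta(t_0^{\pm})$ exist: the dangerous scenario is $\theta\to-\infty$ (monotone winding, case~\ref{p-ii} of Lemma~\ref{lem:turning}), in which case $\dot\gamma(t_0^\pm)$ fail to exist and Theorem~\ref{t:HLK} cannot be applied directly. The paper's way around this --- the real content of Sections~\ref{s:sis} and~\ref{proof-ex-conj}, together with Lemmas~\ref{lem:estimate0} and~\ref{lem8} --- is to rescale time on the winding intervals, pass to the nilpotent approximation via Lemma~\ref{lem:nilpot}, show that the weak-$\star$ limit of the controls is a unit constant, and then reach a contradiction by comparing with the quantitative estimates \eqref{esti1}--\eqref{esti3}. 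Your proposal does not identify this blow-up/nilpotentization device, nor the desingularization step (Lemma~\ref{le:desing}) that makes the blow-up well-defined, so the proof does not close. Two smaller points: (1) on the interior of $\{h=0\}$ you assert a ``usual bootstrap,'' but $u(t)=\pm v(t)$ with $v$ spanning $\ker S(t)$ leaves a sign ambiguity that the paper resolves via optimality (Lemma~\ref{lem:constantsign}); and (2) opposite one-sided derivatives do not in general contradict injectivity (e.g.\ $\gamma(t)=(t,0)$ for $t\le0$, $\gamma(t)=(-t,t^2)$ for $t\ge0$ is injective), so that subcase must also be handled by Theorem~\ref{t:HLK}, not by an injectivity argument.
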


 It is legitimate to ask
whether the $C^1$ regularity in the Open Problem
can be further improved.
Indeed, the argument behind our proof permits to obtain $C^{\infty}$ regularity of length-minimizers under an additional nilpotency condition on the Lie algebra generated by horizontal vector fields.
\begin{proposition}\label{t:main2}
Assume that $\distr$ is generated by two vector fields $X_{1},X_{2}$ such that the Lie algebra $\mathrm{Lie}\{X_{1},X_{2}\}$ is  nilpotent of step at most $4$.
Then for every sub-Riemannian structure $(\distr,g)$ on $M$, the corresponding length-minimizers are of class $C^\infty$.
\end{proposition}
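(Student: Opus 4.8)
The plan is to run the argument behind Theorem~\ref{t:main} and then exploit the step-$\le4$ nilpotency to rigidify the abnormal extremal, upgrading $C^1$ regularity to $C^\infty$. First I would note that if $\gamma$ admits a normal lift it is automatically smooth, so one may assume $\gamma$ is strictly abnormal with abnormal lift $\lambda:[0,T]\to T^*M$; by the Goh conditions $\lambda(t)\in(\distr^2)^\perp$ for all $t$, that is $\langle\lambda,X_1\rangle=\langle\lambda,X_2\rangle=\langle\lambda,[X_1,X_2]\rangle\equiv0$ along $\gamma$. Since $\mathrm{Lie}\{X_1,X_2\}$ is nilpotent of step $\le4$, every iterated bracket of $X_1,X_2$ of length $\ge5$ vanishes identically, whence $\distr^4_x=\mathrm{Lie}_x\{X_1,X_2\}=T_xM$ and $(\distr^4_x)^\perp=\{0\}$ for every $x$; as abnormal lifts are nowhere zero, the hypothesis of Theorem~\ref{t:main} is met and $\gamma\in C^1$. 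Writing $\dot\gamma=u_1X_1+u_2X_2$ with the unit control $u=(u_1,u_2)$, this gives $u\in C^0$, and the goal is reduced to showing that $u$ is constant.

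The second step is to extract, from the bracket hierarchy, the equations governing $u$. Set $W=[X_1,X_2]$ and, along the extremal, $h_{112}=\langle\lambda,[X_1,W]\rangle$, $h_{212}=\langle\lambda,[X_2,W]\rangle$, $a=\langle\lambda,[X_1,[X_1,W]]\rangle$, $b=\langle\lambda,[X_2,[X_1,W]]\rangle$, $c=\langle\lambda,[X_2,[X_2,W]]\rangle$; the Jacobi identity gives $[X_1,[X_2,W]]=[X_2,[X_1,W]]$, so $\langle\lambda,[X_1,[X_2,W]]\rangle=b$. Using $\frac{d}{dt}\langle\lambda,Y\rangle=u_1\langle\lambda,[X_1,Y]\rangle+u_2\langle\lambda,[X_2,Y]\rangle$, differentiating $\langle\lambda,W\rangle\equiv0$ yields the pointwise constraint $u_1h_{112}+u_2h_{212}\equiv0$, while differentiating $h_{112}$ and $h_{212}$ gives $(\dot h_{112},\dot h_{212})=Qu$ with $Q=\left(\begin{smallmatrix}a&b\\ b&c\end{smallmatrix}\right)$; and since $\dot a,\dot b,\dot c$ involve only length-$5$ brackets, they vanish, so $Q$ \emph{is constant along the extremal}. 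Let $Z=\{t:\lambda(t)\in(\distr^3)^\perp\}=\{t:h_{112}(t)=h_{212}(t)=0\}$ and $\rho^2=h_{112}^2+h_{212}^2$. If $Z=\emptyset$, then $\lambda$ never enters $(\distr^3)^\perp$ and $\gamma\in C^\infty$ by \cite[Sect.~6.2, Cor.~4]{liu95} (equivalently, $u=\pm(-h_{212},h_{112})/\rho$ is a smooth function of $\lambda$, so $(\gamma,\lambda)$ solves a smooth autonomous system). If $Z\ne\emptyset$, then $Q\ne0$: picking $t_0\in Z$, were $Q=0$ the covector $\lambda(t_0)$ would annihilate $\distr^3$ together with $[X_1,[X_1,W]],[X_2,[X_1,W]],[X_2,[X_2,W]]$, hence all of $\distr^4=TM$, forcing $\lambda(t_0)=0$.

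In the remaining case $Z\ne\emptyset$, so $Q$ is a fixed nonzero symmetric $2\times2$ matrix, I would argue as follows. On the open set $\{\rho>0\}$ the control $u$ is smooth (once more $u=\pm(-h_{212},h_{112})/\rho$, a smooth function of $\lambda$, and $\lambda$ solves there a smooth autonomous system); write $\dot u=\mu u^\perp$ and $(h_{112},h_{212})=\kappa u^\perp$, with $u^\perp=(-u_2,u_1)$ and $\kappa^2=\rho^2$. Differentiating the latter identity (using $\frac{d}{dt}u^\perp=-\mu u$), equating to $(\dot h_{112},\dot h_{212})=Qu$, and projecting onto $u$ and $u^\perp$ give $u^\top Qu=-\kappa\mu$ and $(u^\perp)^\top Qu=\dot\kappa$; since $Q$ is constant, $\frac{d}{dt}(u^\top Qu)=2\mu(u^\perp)^\top Qu=2\mu\dot\kappa$, and eliminating $\mu$ yields $\frac{d}{dt}\!\big((u^\top Qu)\,\rho^2\big)=0$. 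Thus $(u^\top Qu)\rho^2$ is constant on each connected component of $\{\rho>0\}$; since $Z\ne\emptyset$ every such component has an endpoint in $Z$, where $\rho\to0$ while $u^\top Qu$ stays bounded by $\|Q\|$, so that constant is $0$ and $u^\top Qu\equiv0$ on $\{\rho>0\}$. On $\mathrm{int}(Z)$ one has $Qu=(\dot h_{112},\dot h_{212})\equiv0$, hence $u^\top Qu\equiv0$ there too; as the remainder $\partial Z$ is nowhere dense and $u^\top Qu$ is continuous, $u^\top Qu\equiv0$ on $[0,T]$. Finally, since $Q\ne0$ is symmetric, $\{v\in S^1:v^\top Qv=0\}$ is either empty (if $Q$ is definite) or consists of two or four points; a continuous $u$ taking values in this set on the connected interval $[0,T]$ cannot land in the empty set, and must therefore be constant. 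Then $\gamma$ is an integral curve of the smooth vector field $u_1X_1+u_2X_2$, so $\gamma\in C^\infty$.

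The crux — and the only place where the step-$\le4$ nilpotency is genuinely used — is the constancy of $Q$: it turns the apparent singularity of $\dot u=\mu u^\perp$, $\mu=-u^\top Qu/\kappa$, near $Z$ (where $\kappa\to0$) into a spurious one, via the conservation law $\frac{d}{dt}\big((u^\top Qu)\rho^2\big)=0$. The secondary difficulty I anticipate is that $Z$ need not be a finite union of intervals (it could a priori be a Cantor-type set), so the argument must avoid differentiating on $Z$ itself and instead work on $\{\rho>0\}$ and $\mathrm{int}(Z)$, closing up on the nowhere-dense set $\partial Z$ by continuity. What remains are routine checks of degenerate sub-cases — $Q$ of rank one, $(a,b)=(0,0)$, components of $\{\rho>0\}$ abutting the endpoints $0$ or $T$, and $Z=[0,T]$ (where one differentiates $h_{112}\equiv h_{212}\equiv0$ directly to get $Qu\equiv0$ and hence $\det Q=0$) — none of which introduces a new idea.
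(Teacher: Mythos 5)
Your proposal is correct, but it takes a genuinely different route from the paper's. The paper proves Proposition~\ref{t:main2} \emph{without} invoking Theorem~\ref{t:main}: since the constant traceless matrix $A$ (equal to $JQ$ with $J$ the $\pi/2$-rotation, in your notation) satisfies $\det A\le 0$ by Proposition~\ref{prop:nonpos} and $A\ne 0$ by Lemma~\ref{l:dis}, the paper analyzes the maximal intervals where $h\ne 0$ directly: $h$ must lie in a stable or unstable eigendirection of $A$, hence is \emph{affine} there with $u$ equal to a fixed unit eigenvector; this forces $\{h=0\}$ to be either all of $I$, a subinterval (ruled out because then $\ker A\ne 0$ contradicts $\det A<0$), or a single point $\bar t$, and Theorem~\ref{t:HLK} equates the two one-sided limits of $u$ at $\bar t$. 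You instead first use Theorem~\ref{t:main} as a black box to get $u\in C^0$, and then replace the eigendirection analysis by the conservation law $\tfrac{d}{dt}\bigl((u^\top Qu)\rho^2\bigr)=0$, forcing $u^\top Qu\equiv 0$ and hence $u$ to take values in the finite set $\{v\in S^1: v^\top Qv=0\}$, which by continuity makes $u$ constant; as a by-product you rederive $\det Q\le 0$ (the analogue of Proposition~\ref{prop:nonpos} in this setting) without citing it. The two proofs thus differ in logical dependency and in mechanism: the paper's argument is self-contained and serves as the ``warm-up'' model for the general case, while yours is shorter but leans on the (harder) Theorem~\ref{t:main}, replacing the geometric stable/unstable-manifold picture with a slick first integral. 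Both establish that $u$ is constant, hence $\gamma$ is an integral curve of $u_1X_1+u_2X_2$ and so $C^\infty$. One small clean-up worth making explicit: when you claim $u$ is smooth on each component of $\{\rho>0\}$, the local constancy of the sign in $u=\pm(-h_{212},h_{112})/\rho$ follows from $u\in C^0$ (or, independently, from Lemma~\ref{lem:constantsign}); otherwise the statement is not a priori clear.
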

%
The above proposition applies in particular to Carnot groups of rank 2 and step at most 4. In this case we recover the results obtained in \cite[Example 4.6]{LM08}.

\medskip
The strategy of proof of Theorem \ref{t:main} is to show that, at points where they are not of class $C^1$, length-minimizers can admit only corner-like singularities. This is done by a careful asymptotic analysis of the differential equations satisfied by the abnormal lift, which exploits their Hamiltonian structure.
We can then conclude thanks to the following result.

\begin{theorem}[\cite{Hakavuori2016}]
\label{t:HLK}
Let $M$ be a sub-Riemannian manifold. Let $T>0$ and let $\gamma:[-T,T]\to M$ be a horizontal curve parametrized by arclength. Assume that, in local coordinates, there exist
$$\dot \gamma^{+}(0):=\lim_{{t\downarrow 0}} \frac{ \gamma(t)-\gamma(0)}{t},\qquad \dot \gamma^{-}(0):=\lim_{t\uparrow 0}\frac{ \gamma(t)-\gamma(0)}{t}.$$
If $\dot \gamma^{+}(0)\neq \dot \gamma^{-}(0)$, then $\gamma$ is not a length-minimizer.
\end{theorem}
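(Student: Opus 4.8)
The statement to be established is the ``no-corners'' theorem, and I would follow the approach of \cite{Hakavuori2016}, in two stages: a blow-up reduction to Carnot groups, followed by the proof that a broken horizontal line is never length-minimizing in a Carnot group. The point is that the existence of the one-sided derivatives turns, after rescaling, into a curve that is \emph{exactly} two horizontal half-lines meeting at a genuine angle, and such a curve cannot be a geodesic of a Carnot group.

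\emph{Step 1: reduction to a Carnot group.} Suppose, for contradiction, that $\gamma$ is a length-minimizer with a corner at $t=0$, and set $v^{\pm}=\dot\gamma^{\pm}(0)$, so that $v^{+}\neq v^{-}$ are horizontal unit vectors at $q_{0}=\gamma(0)$. Let $\widehat G$ be the nilpotent approximation of the sub-Riemannian structure at $q_{0}$, with dilations $\delta_{r}$, and consider the rescaled curves $\gamma_{r}(t):=\delta_{1/r}\bigl(\gamma(rt)\bigr)$ expressed in privileged coordinates centered at $q_{0}$. Since the one-sided derivatives of $\gamma$ at $0$ exist, the family $(\gamma_{r})_{r>0}$ converges, as $r\downarrow 0$, uniformly on $[-1,1]$ to the \emph{broken line} $\widehat\gamma$ defined by $\widehat\gamma(t)=\exp(tv^{+})$ for $t\geq 0$ and $\widehat\gamma(t)=\exp(tv^{-})$ for $t\leq 0$; this curve is horizontal, unit speed, and has a genuine corner at the identity because $v^{+}\neq v^{-}$. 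Because $\gamma$ is a length-minimizer, $L(\gamma|_{[-r,r]})=\d(\gamma(-r),\gamma(r))$ for every small $r$; rescaling this identity by $1/r$ and passing to the limit --- using the convergence of the rescaled distances $\tfrac1r\d$ to $\widehat\d$ (Mitchell's theorem, i.e.\ Gromov--Hausdorff convergence to the metric tangent cone) together with lower semicontinuity of length --- yields $\widehat\d\bigl(\widehat\gamma(-1),\widehat\gamma(1)\bigr)=2=L_{\widehat G}(\widehat\gamma)$. Hence the broken line is length-minimizing in the Carnot group $\widehat G$, and it suffices to rule this out.

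\emph{Step 2: broken lines are not minimizing in Carnot groups.} Here one must exhibit, for any Carnot group $G=\exp(V_{1}\oplus\cdots\oplus V_{s})$ and any pair of distinct horizontal unit vectors $v^{\pm}$, a horizontal path joining $\exp(-v^{-})$ to $\exp(v^{+})$ of length strictly less than $2$. The naive attempt --- replace the corner on a short window $[-\eps,\eps]$ by a nearby shortcut --- gains length of order $\eps\,(2-|v^{+}+v^{-}|)$ in the first stratum, but the mismatch it creates in the $j$-th stratum has homogeneous size $\eps^{j}$, whose correction by admissible horizontal motions costs length of order $\eps$ as well (ball--box theorem); so a purely local cut does not close the argument. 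The construction must therefore be non-local: one modifies the two half-lines over a macroscopic portion of their length, exploiting the homogeneity of $G$ under $\delta_{r}$ (which fixes lines through the identity, so that minimality of the corner is scale-invariant) and the group structure to absorb the higher-stratum corrections without an extra length cost, with an induction on the step $s$ --- reducing the top-stratum correction to a problem in a lower-step quotient --- as the natural scheme. This quantitative shortcut in Carnot groups is the heart of the matter and, I expect, by far the main obstacle, precisely because the length gained by cutting and the length spent on the correction are of the same order: one cannot conclude by crude estimates and must use the algebraic structure of $G$ in an essential way.
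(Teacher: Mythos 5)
The paper does not actually prove this theorem: it quotes it from \cite{Hakavuori2016} as a black box (and even flags, in the surrounding text, that the proof in \cite{Hakavuori2016} rests on a shortcutting lemma from \cite{LM08} whose complete justification is given in \cite{MPV17}). So there is no ``paper's own proof'' to compare against, and your proposal should be judged as a reconstruction of the argument of Hakavuori--Le Donne.

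Your Step 1 is sound and matches the actual argument: existence of the two one-sided derivatives forces the dilated curves $\gamma_r$ to converge (in privileged coordinates, after passing to weak-$\star$ limits of the controls) to the concatenation of the two horizontal half-lines $t\mapsto\exp(tv^{\pm})$; combining $L(\gamma|_{[-r,r]})=d(\gamma(-r),\gamma(r))$ with the Gromov--Hausdorff convergence of $\tfrac1r d$ to $\widehat d$ and lower semicontinuity of length shows that the broken line is a geodesic of the Carnot group $\widehat G$. The reduction is essentially the same one the paper itself performs (in a slightly different form) in Lemma~3.3, and is correct.

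Step 2, however, is not a proof: it is a correct diagnosis of the difficulty followed by an admission that you do not resolve it. You rightly observe that a purely local surgery near the corner gains length of order $\eps$ in $V_1$ but creates a defect of order $\eps^j$ in $V_j$, whose correction also costs order $\eps$, so the orders cancel and ``crude estimates'' fail; and you rightly guess that the cure must use the dilations and an induction on the step via the quotient by the top stratum. But none of this is carried out, and the sentence ``This quantitative shortcut in Carnot groups is the heart of the matter and, I expect, by far the main obstacle'' explicitly concedes the gap. What is missing is precisely the content of the Leonardi--Monti / Hakavuori--Le Donne correction lemma: after shortcutting the projection of the corner in the quotient Carnot group $G/\exp V_s$ (which exists by the inductive hypothesis), one must lift the shortcut back to $G$, estimate the resulting error, which lies in $\exp V_s$, and show that this error can be compensated by an auxiliary horizontal path whose length is \emph{strictly smaller} than the length saved. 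This requires a genuine quantitative argument --- roughly, that an element $\exp w$ with $w\in V_s$ of homogeneous norm $\eta$ can be reached by a horizontal loop of length $O(\eta)$ with a constant depending only on $G$, and then a careful choice of the scale $\eps$ so that the saving (order $\eps$) dominates the correction cost. Without that estimate, and without the inductive bookkeeping that makes the constants close, the proof does not go through; as written, Step 2 establishes nothing beyond ``the obvious approach fails and a cleverer one is needed,'' which is exactly the gap the cited literature fills.
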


We observe that the proof contained in  \cite{Hakavuori2016} requires a previous result stated in \cite{LM08}. A complete argument for the latter, addressing some issues raised in \cite[p.\  1113-15]{RiffordBBK}, is provided in  \cite{MPV17}.
For sub-Riemannian structures of rank $2$ and step at most $4$ (and indeed also for higher step, under an additional condition on the Lie algebra generated by horizontal vector fields), the fact that corners are not length-minimizers is already contained in \cite{LM08}.

We  notice that the answer to the Open Problem is known to be positive also in a class of rank 2 Carnot groups (with no restriction on the step, but satisfying other additional conditions). For this class of structures in \cite{Monti2014}, it is proved the $C^{1,\alpha}$ regularity for some suitable $\alpha>0$ (depending on the step).

We also refer to \cite{MPV18bis,HLD18} for recent results regarding these issues.

\subsection{Structure of the paper}
In Section \ref{s:prel} we recall some notations and preliminary notions. Section \ref{s:nilp} is devoted to a desingularization and nilpotentization argument.  Section \ref{s:dyn1} contains a preliminary analysis on the dynamics of abnormal extremals. To illustrate our approach in a simpler case, we discuss in Section~\ref{s:r2s4} the proof of the main result for a nilpotent structure of step up to $4$. Then in Sections~\ref{s:sis} and \ref{proof-ex-conj} we complete our analysis to prove the general result. Appendix~\ref{s:app} contains a technical lemma.

\subsection*{Acknowledgments}
We thank Ugo Boscain, Paolo Mason, Ludovic Rifford, and Luca Rizzi for many stimulating discussions.
This work was supported by the Grant ANR-15-CE40-0018 SRGI ``Sub-Riemannian geometry and interactions''
and by a public grant as part of the Investissement d'avenir project, reference ANR-11-LABX-0056-LMH, LabEx LMH, in a joint call with Programme Gaspard Monge en Optimisation et Recherche Op\'erationnelle.

\section{Notations and preliminary notions}
\label{s:prel}

Let $M$ be a smooth  $n$-dimensional manifold.  A \emph{sub-Riemannian structure} of rank $m$ on $M$ is a triplet $(E,g^E,f)$ where $E$ is a vector bundle of rank $m$ over $M$,  $g^E$ is an Euclidean metric on $E$, and $f: E \to TM$ is a morphism of vector bundles
such that $f(E_x)\subseteq T_x M$ for every $x\in M$.
Fix such a structure and define a family of subspaces of the tangent spaces by
\begin{equation*}
\distr_x=\{X(x) \mid X \in\distr\} \subseteq T_{x}M,\qquad \forall\, x\in M,
\end{equation*}
where $\distr=\{f \circ Y \mid Y \textrm{ smooth section of }E\}$ is a submodule of the set of vector fields on $M$. We assume that the structure is \emph{bracket generating}, i.e., the tangent space $T_{x}M$ is spanned by the vector fields in $\distr$ and their iterated Lie brackets evaluated at $x$.

The sub-Riemannian structure induces a quadratic form $g_{x}$ on $\distr_{x}$ by
\begin{equation*}
g_x(v,v)=
\inf\{ g^E_x( u,u) \mid f(u)=v, u\in E_x\}, \quad v\in \distr_x.
\end{equation*}
In analogy with the classic sub-Riemannian case and to simplify notations, in the sequel we will refer to the sub-Riemannian structure as the pair $(\distr,g)$ rather than $(E, g^E, f)$. This is justified since all the constructions and definitions below rely only on $\distr$ and $g$. The  triplet $(M, \distr, g)$ is called a \emph{sub-Riemannian manifold}.

\begin{rmk}
Usually, a sub-Riemannian manifold  denotes a triplet $(M,D,g)$, where $M$ is a smooth manifold, $D$
is a subbundle of $TM$, and $g$ is a Riemannian metric on $D$  (see, e.g., \cite{Bellaiche1996}).
This corresponds to the case where $f(E_x)$ is of constant rank.
The definition given above follows, for instance, \cite{ABB}.
\end{rmk}

A \emph{horizontal curve} $\gamma : [0,T] \to M$ is an absolutely continuous path such that $\dot\gamma(t)\in \distr_{\gamma(t)}$ for almost every (a.e.\ for short) $t\in [0,T]$.
The \emph{length} of a horizontal curve is defined by
\begin{equation*}
\ell(\gamma) = \int_0^T \sqrt{g_{\gamma(t)}(\dot\gamma(t),\dot\gamma(t))}dt.
\end{equation*}
The \emph{sub-Rieman\-nian distance} between two arbitrary points $x,y$ in $M$ is then
\begin{equation*}
d(x,y) = \inf\{\ell(\gamma)\mid \gamma(0) = x,\, \gamma(T) = y,\, \gamma \text{ horizontal} \}.
\end{equation*}
A \emph{length-minimizer} is a horizontal curve $\gamma$ which realizes the distance between its extremities, that is, $\ell(\gamma)=d(\gamma(0),\gamma(T))$. Note that any time-reparametrization of a length-minimizer is a length-minimizer as well.

A \emph{generating frame} of the sub-Riemannian structure is a family of smooth vector fields $X_1,\dots,X_k$ such that $\distr$ is generated by $X_1,\dots, X_k$ as a module and
$$
g_x(v,v)=\inf\left\{\sum_{i=1}^{k}u_{i}^{2}\mid \sum_{i=1}^{k}u_{i}X_{i}(x)=v\right\}, \quad x \in U, \ v \in \distr_x.
$$
There always exists a global generating frame (see \cite[Corollary 3.26]{ABB}), with, in general, a number $k$ of elements greater than the rank $m$ of the structure. However, every point $x \in M$ admits a neighborhood on which there exists a (local) generating frame with exactly $k=m$ elements, e.g., by taking the image via $f$ of a local orthonormal frame of $(E,g^E)$.

Fix now a (local or global) generating frame $X_1,\dots,X_k$ of $(D,g)$. For any horizontal curve $\gamma$ of finite length,  there exists $u\in L^{\infty}([0,T],\R^{k})$ satisfying
\begin{equation}\label{eq:dotgamma}
\dot\gamma(t) =  \sum_{i=1}^k u_i(t) X_i(\gamma(t)), \qquad \text{for a.e.}\ t \in [0,T].
\end{equation}
The curve is said to be \emph{parametrized by arclength} if $g_{\gamma(t)}(\dot\gamma(t),\dot\gamma(t)) =1$ for a.e.\ $t \in [0,T]$, i.e., if there exists $u\in L^\infty([0,T],\mathbb{S}^{k-1})$ satisfying~\eqref{eq:dotgamma}. In that case $\ell(\gamma)=T$.

\smallskip
To state the first order necessary conditions, let us first introduce some notations. For $\lambda \in T^*M$ and $x=\pi(\lambda)$, where $\pi : T^*M \to M$ is the canonical projection, we set $h_i(\lambda) = \langle\lambda, X_i(x)\rangle$, for $i=1,\dots,k$ (here $\langle \lambda,\cdot\rangle$ denotes the dual action of covectors on vectors). Recall also that, for a function $H : T^*M \to \R$, the corresponding Hamiltonian vector field $\vec{H}$ is the unique vector field such that $\sigma(\cdot,\vec{H}) = dH$, where $\sigma$ is the canonical symplectic form on the cotangent bundle.

Applying the Pontryagin Maximum Principle to the sub-Riemannian length minimization problem yields the following theorem.
\begin{theorem} \label{t:utile}
Let $(M,D,g)$ be a sub-Riemannian manifold with generating frame $X_1,\dots, X_k$ and $\gamma  :[0,T] \to M$
be a length-minimizer. Then there exists a nontrivial absolutely continuous  curve $t\mapsto \lambda(t) \in T_{\gamma(t)}^*M$  such that one of the following conditions is satisfied:
\begin{itemize}
\item[(N)] $\dot{\lambda}(t) = \vec{H}(\lambda(t))$  for all $t\in [0,T]$, where $H(\lambda) = \frac{1}{2}\sum_{i=1}^{k}h_i^2$,
\item[(A)] $\dot{\lambda}(t) =  \sum_{i=1}^{k} u_i (t) \vec{h}_i(\lambda(t))$
 for almost every $t\in [0,T]$, with $u_1,\dots,u_k\in L^1([0,T])$. Moreover,
 $\lambda(t) \in (D_{\gamma(t)})^\perp$ for all $t$, 
i.e.,  $h_i(\lambda(t)) \equiv 0$ for $i=1,\dots,k$.
\end{itemize}
\end{theorem}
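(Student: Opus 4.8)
The plan is to deduce Theorem~\ref{t:utile} from the Pontryagin Maximum Principle (PMP) applied to a suitable optimal control reformulation of the length minimization problem. First I would fix a (local or global) generating frame $X_1,\dots,X_k$, so that, by~\eqref{eq:dotgamma}, horizontal curves of finite length are exactly the trajectories of the control-affine system $\dot\gamma(t)=\sum_{i=1}^k u_i(t)X_i(\gamma(t))$, $u\in L^\infty([0,T],\R^k)$, and reparametrizing if necessary we may assume that $\gamma$ is parametrized by arclength, i.e.\ $|u(t)|=1$ a.e.\ and $\ell(\gamma)=T$. By the Cauchy--Schwarz inequality, an arclength-parametrized length-minimizer on $[0,T]$ is equivalently a minimizer of the energy $J(u)=\tfrac12\int_0^T|u(t)|^2\,dt$ among all trajectories of the system joining $\gamma(0)$ to $\gamma(T)$ in the fixed time $T$. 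This is a smooth, fixed-horizon optimal control problem with control set $\R^k$, to which the PMP applies.

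The PMP then produces a constant $\lambda_0\le 0$ and an absolutely continuous curve $\lambda:[0,T]\to T^*M$ with $\pi(\lambda(t))=\gamma(t)$, the pair $(\lambda_0,\lambda(\cdot))$ being nontrivial, such that, introducing the control-dependent Hamiltonian $\mathcal H_v(\lambda,\lambda_0)=\sum_{i=1}^k v_i\,h_i(\lambda)+\tfrac{\lambda_0}{2}|v|^2$, one has the adjoint (Hamiltonian) equation $\dot\lambda(t)=\vec{\mathcal H}_{u(t)}(\lambda(t),\lambda_0)$ for a.e.\ $t$ together with the maximality condition $\mathcal H_{u(t)}(\lambda(t),\lambda_0)=\max_{v\in\R^k}\mathcal H_v(\lambda(t),\lambda_0)$ for a.e.\ $t$. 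Since the term $\tfrac{\lambda_0}{2}|v|^2$ does not depend on $\lambda$, the Hamiltonian vector field of $\mathcal H_v$ is simply $\vec{\mathcal H}_v=\sum_{i=1}^k v_i\,\vec h_i$, so in every case $\dot\lambda(t)=\sum_{i=1}^k u_i(t)\,\vec h_i(\lambda(t))$; in particular $\lambda$ solves a linear nonautonomous ODE, hence by uniqueness it either vanishes identically or is everywhere nonzero, which (together with $\lambda_0=0$ in the case where $\lambda$ has a zero) secures the nontriviality of $\lambda$ itself.

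It then remains to split according to the sign of $\lambda_0$. If $\lambda_0<0$, normalize $\lambda_0=-1$; strict concavity of $v\mapsto\mathcal H_v(\lambda(t),-1)$ and the maximality condition force the stationarity relation $u_i(t)=h_i(\lambda(t))$, and substituting into the adjoint equation gives $\dot\lambda(t)=\sum_i h_i(\lambda(t))\,\vec h_i(\lambda(t))=\vec H(\lambda(t))$ with $H=\tfrac12\sum_i h_i^2$; as $\vec H$ is a smooth autonomous vector field on $T^*M$, $\lambda$ is automatically $C^\infty$ and the equation holds for all $t$ — this is case~(N). If $\lambda_0=0$, then $v\mapsto\mathcal H_v(\lambda(t),0)=\sum_i v_i h_i(\lambda(t))$ is linear on $\R^k$, so it attains a maximum over $\R^k$ if and only if $h_i(\lambda(t))=0$ for all $i$, i.e.\ $\lambda(t)\in(D_{\gamma(t)})^\perp$; combined with $\dot\lambda(t)=\sum_i u_i(t)\vec h_i(\lambda(t))$ and $u\in L^\infty([0,T])\subset L^1([0,T])$, this is case~(A). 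The main point requiring care is reconciling the intrinsic statement with a setting where the PMP can be quoted: one either works in local coordinates and checks that the resulting covector curve is coordinate-independent, or runs the whole argument intrinsically via the endpoint map $E_{x_0,T}:u\mapsto\gamma_u(T)$ — a length-minimizing control is a critical point of $J$ constrained to a level set of $E_{x_0,T}$, and the dichotomy (N)/(A) corresponds exactly to whether or not $E_{x_0,T}$ is a submersion at that control, the abnormal covector arising by propagating backwards along the flow a covector annihilating the image of the differential of $E_{x_0,T}$. Translating the latter annihilation condition, through the explicit formula for $dE_{x_0,T}$, into the pointwise identities $h_i(\lambda(t))\equiv 0$ is the only genuinely computational step; everything else is bookkeeping.
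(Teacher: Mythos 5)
The paper does not prove Theorem~\ref{t:utile}; it simply states that it follows from the Pontryagin Maximum Principle applied to the length-minimization problem, treating it as standard background. Your proposal correctly spells out exactly that derivation (length to energy via Cauchy--Schwarz and arclength parametrization, application of the PMP with the control-dependent Hamiltonian $\mathcal H_v=\sum v_i h_i+\tfrac{\lambda_0}{2}|v|^2$, then the $\lambda_0<0$ versus $\lambda_0=0$ dichotomy), so it matches the argument the paper implicitly invokes. The only place where your write-up is a bit loose is the nontriviality of $\lambda$ alone (as opposed to the pair $(\lambda_0,\lambda)$): in the normal case one should note that $u_i(t)=h_i(\lambda(t))$ together with $|u(t)|=1$ a.e.\ already forces $\lambda\not\equiv 0$, which then propagates to $\lambda(t)\neq 0$ for all $t$ by uniqueness for the linear adjoint ODE; your parenthetical remark conflates this with the abnormal case, but the underlying reasoning is sound.
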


In  case (N) (respectively, case (A)), $\lambda$ is called a \emph{normal} (respectively, \emph{abnormal}) \emph{extremal}. Normal extremals are integral curves of $\vec{H}$. As such, they are smooth. A length-minimizer is normal (respectively, abnormal) if it admits a normal (respectively, abnormal) extremal lift. We stress that both conditions can be satisfied for the same curve $\gamma$, with different lifts $\lambda_{1}$ and $\lambda_{2}$.

\section{Desingularisation and nilpotentization}\label{s:nilp}

\subsection{Desingularisation}
Let $(M, \distr, g)$ be a sub-Riemannian manifold.
We define recursively  the following
 sequence of submodules of the set of vector fields,
$$
\distr^{1}=\distr,\quad \distr^{i+1}=\distr^{i}+[\distr,\distr^{i}].
$$
At every point $x \in M$, the evaluation at $x$ of these modules induces a flag of subspaces of the tangent space,
\begin{equation*}
\distr^1_x \subset \distr^2_x \subset \cdots \subset \distr^r_x = T_xM.
\end{equation*}
The smallest integer $r=r(x)$ satisfying $\distr^r_x = T_xM$ is called the \emph{step of $\distr$ at $x$}. A point is said to be \emph{regular} if the dimensions of the subspaces of the flag are locally constant  in an open neighborhood of the point. When every point in $M$ is regular, the sub-Riemannian manifold is said to be \emph{equiregular}.

In general a sub-Riemannian manifold may admit non-regular points. However, for our purposes, we can restrict ourselves with no loss of generality to  equiregular manifolds thanks to a desingularisation procedure.

\begin{lemma}
\label{le:desing}
Fix an integer $m\ge 2$. Assume that for every rank $m$ equiregular sub-Riemannian structure the following property holds: every arclength parametrized abnormal minimizer
admitting a lift  $\lambda(t)\notin (\distr^{4})^{\perp}$
is  of class $C^1$. Then the same property holds true  for every rank $m$ sub-Riemannian structure.
\end{lemma}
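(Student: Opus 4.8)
The plan is to use the classical desingularisation technique: lift the sub-Riemannian structure on $M$ to an equiregular structure on a manifold $\widetilde M$ of higher dimension, in such a way that minimizers and abnormal lifts are preserved under the projection $\pi:\widetilde M\to M$. First I would recall the construction (due to essentially Rothschild--Stein / Bellaïche-type lifting, or rather the blow-up/lift of \cite{Bellaiche1996} as adapted in the equiregularity literature): given a non-regular point $x_0$, one adds auxiliary variables and new horizontal vector fields $\widetilde X_i$ on a neighborhood in $\widetilde M = M \times \R^d$ (for suitable $d$) so that the $\widetilde X_i$ are $\pi$-related to the $X_i$, the lifted structure $(\widetilde\distr,\widetilde g)$ has rank $m$, and $\widetilde M$ is equiregular along the relevant set. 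The key point to verify carefully is that the rank is not increased: the new vector fields are chosen so that $\widetilde\distr$ is still generated by $m$ vector fields with the pulled-back metric, which is exactly what the statement of the lemma allows (it is a hypothesis about rank $m$ structures, with no constraint on the step).

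Next I would establish the two transfer properties. \emph{(i) Minimizers lift to minimizers.} Since $\pi$ is a submersion intertwining $\widetilde\distr$ and $\distr$ and preserving the metrics, any horizontal curve $\gamma$ in $M$ with $\gamma(0)=x_0$ admits a horizontal lift $\widetilde\gamma$ in $\widetilde M$ of the same length, and conversely $\pi\circ\widetilde\gamma$ is horizontal of the same length; hence $d_{\widetilde M}(\widetilde x,\widetilde y)\ge d_M(x,y)$ with equality achieved, so $\gamma$ is a length-minimizer in $M$ if and only if some (equivalently, any) horizontal lift $\widetilde\gamma$ is a length-minimizer in $\widetilde M$. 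Moreover arclength parametrization is preserved. \emph{(ii) Abnormal lifts correspond, and the flag condition is preserved.} Because the $\widetilde X_i$ are $\pi$-related to the $X_i$, one has $d\pi^* (\distr^j)^\perp \subseteq (\widetilde\distr^j)^\perp$, and more precisely the flags are compatible: if $\lambda$ is an abnormal extremal lift of $\gamma$ along condition (A), then pulling back via the cotangent map associated to $\pi$ produces an abnormal extremal lift $\widetilde\lambda$ of $\widetilde\gamma$; and if $\lambda(t)\notin(\distr^4)^\perp$ for all $t$, then $\widetilde\lambda(t)\notin(\widetilde\distr^4)^\perp$ for all $t$, because a covector annihilating $\widetilde\distr^4_{\widetilde\gamma(t)}$ would, by $\pi$-relatedness and surjectivity of $d\pi$, push down to one annihilating $\distr^4_{\gamma(t)}$. (Here I would also note the Goh/abnormal relations are intrinsic and transfer.)

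With (i) and (ii) in hand the lemma follows: let $\gamma:[0,T]\to M$ be an arclength parametrized abnormal minimizer admitting a lift with $\lambda(t)\notin(\distr^4)^\perp$; regularity is a local property, so it suffices to prove $\gamma$ is $C^1$ near each time $t_0$, and after restricting to a small interval we may assume $\gamma$ lies in a coordinate neighborhood of a point where the desingularisation $\widetilde M$ is equiregular. Lift to $\widetilde\gamma$, which by (i) is an arclength parametrized minimizer and by (ii) an abnormal one admitting a lift avoiding $(\widetilde\distr^4)^\perp$; by hypothesis $\widetilde\gamma$ is $C^1$; since $\gamma=\pi\circ\widetilde\gamma$ and $\pi$ is smooth, $\gamma$ is $C^1$ on that interval, hence everywhere.

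The main obstacle is the construction in property (ii): one must make sure the lifting can be performed \emph{without raising the rank} and in a way \emph{compatible with the whole flag} $\distr^1\subset\distr^2\subset\distr^3\subset\distr^4$, so that the hypothesis ``$\lambda(t)\notin(\distr^4)^\perp$'' is genuinely inherited. A naive lifting that only makes the structure equiregular could in principle alter the growth vector in a way that moves the abnormal lift into $(\widetilde\distr^4)^\perp$; ruling this out requires choosing the lifting vector fields so that $\pi$ induces isomorphisms $\widetilde\distr^j_{\widetilde x}/\ker d\pi \cong \distr^j_x$ at the relevant points (this is where equiregularity of $\widetilde M$ and the $\pi$-relatedness are used together). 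Everything else — the equivalence of the two minimization problems, preservation of arclength, the correspondence of Pontryagin extremals — is standard and follows from the fact that $\pi$ is a metric-preserving submersion of sub-Riemannian structures.
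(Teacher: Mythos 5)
Your proposal is correct and follows essentially the same route as the paper: lift to an equiregular desingularisation via a rank-preserving submersion $\varpi$ with $\varpi_*\xi_i = X_i$ (the paper supplies this by citing \cite[Lemma 2.5]{Jea-2014}), and transfer minimality, arclength parametrization, abnormality, and the condition $\lambda(t)\notin(\distr^4)^\perp$ through $\varpi^*$, then invoke the equiregular hypothesis. The only small slip is the claimed ``if and only if'' in your step (i): only the forward implication (a minimizer in $M$ lifts to a minimizer in $\widetilde M$) holds in general, and only that direction is needed.
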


\begin{proof}
Let $(M, \distr, g)$ be a non-equiregular sub-Riemannian manifold of rank $m$ and $\gamma$ be an abnormal length-minimizer of $(M, \distr, g)$ which admits an abnormal extremal lift such that $\lambda(t)\notin (\distr^{4})^{\perp}$ for every $t \in [0,T]$. Assume moreover that $\gamma$ is parametrized by arclength. We have to prove that $\gamma$ is  of class $C^1$.

Fix  $t_0 \in [0,T]$ and a generating frame  $X_1,\dots,X_m$ on a neighborhood of $\gamma(t_0)$.
By \cite[Lemma 2.5]{Jea-2014}, there exists an equiregular sub-Riemannian manifold $(\widetilde{M},\widetilde{\distr},\widetilde{g})$ of rank $m$ with a generating
frame $\xi_1,\dots,\xi_m$ and a map $\varpi: \widetilde{M} \to M$  onto a neighborhood $U \subset M$ of $\gamma(t_0)$ such that $\varpi_*\xi_i=X_i$. Up to reducing the interval $[0,T]$ we assume that $\gamma(t) \in U$ for all $t\in [0,T]$. Let $u\in L^\infty([0,T],\mathbb{S}^{m-1})$ be  such that
\begin{equation*}
\dot\gamma(t) =  \sum_{i=1}^m u_i(t) X_i(\gamma(t)), \qquad \mathrm{a.e.}\ t.
\end{equation*}
By construction, since $\gamma$ is a length-minimizer, there exists a length-minimizer $\widetilde{\gamma}$ in $\widetilde{M}$ with $\varpi(\widetilde{\gamma})=\gamma$ associated with the same $u$, that is,
\[
\dot{\widetilde{\gamma}}(t) = \sum_{i=1}^m u_i(t)\xi_i(\widetilde{\gamma}(t)), \qquad \mathrm{a.e.}\ t,
\]
which is parametrized by arclength as well. Hence the trajectory $\gamma$
has at least the same regularity as $\widetilde{\gamma}$.

Moreover, if $\lambda$ is an abnormal lift of $\gamma$ in $T^*M$, then $\widetilde{\gamma}$ admits an abnormal lift $\widetilde{\lambda}$ in $T^*\widetilde{M}$ such that $\widetilde{\lambda}(t)=\varpi^* \lambda(t)$ for every $t$. Since  $\varpi^* (\widetilde{\distr}^k)^\perp = ({\distr}^k)^\perp$ for any positive integer $k$, the property $\lambda(t) \notin ({\distr}^4)^\perp$ implies $\widetilde{\lambda}(t) \notin (\widetilde{\distr}^4)^\perp$.

It results from the hypothesis that $\widetilde{\gamma}$ is $C^1$,  so $\gamma$ is of class $C^1$ in an open neighborhood of $t_0 \in [0,T]$, which ends the proof.
\end{proof}

As a consequence of Lemma~\ref{le:desing}, we can assume in the rest of the paper that the sub-Riemannian manifold is equiregular.

\subsection{Nilpotentization}
Let us recall the construction of the nilpotent approximation (see for instance \cite{Bellaiche1996} for details).

Let $(M, \distr, g)$ be an equiregular sub-Riemannian manifold. We fix a point $x \in M$ and a local generating frame $X_1,\dots,X_m$ in a neighborhood of $x$.

For $i=1,\dots,n$, let $w_i$ be the smallest integer $j$ such that $\dim \distr^j_x \geq i$. We define the dilations $\delta_\nu : \mathbb{R}^n \to \mathbb{R}^n$ for $\nu \in \R$ as $\delta_\nu (z) = (\nu^{w_1} z_1, \dots, \nu^{w_n} z_n)$.  Let
$z^x$ be a system of privileged coordinates at $x$   and set $\delta_\nu^x=\delta_\nu \circ z^x$. Then, for $i=1,\dots,m$, the 	
 vector field $\varepsilon \big(\delta^{x}_{1/\varepsilon}\big)_*X_i$ converges locally uniformly as $\varepsilon \to 0$ to a vector field $\widehat X^x_i$ on $\R^n$. The space $\R^n$ endowed with the sub-Riemannian structure having $\widehat X^x_1, \dots, \widehat X^x_m$ as generating frame is called the \emph{nilpotent approximation of $(M, \distr, g)$ at $x$} and is denoted by $\widehat{M}_{x}$. This nilpotent approximation $\widehat{M}_{x}$ is a Carnot group equipped with a left-invariant sub-Riemannian structure.

Since $(M,D,g)$ is equiregular, we can locally choose systems of privileged coordinates $z^x$ depending continuously on $x$ \cite[Sect.\ 2.2.2]{Jea-2014}. Note that the $w_i$'s and $\delta_\nu$ are independent of  $x$. Thus an easy adaptation of the proof of \cite[Prop.\ 3.4]{Ambrosio2015} (see also \cite[Sect.\ 10.4.1]{ABB}) shows that, for $i=1,\dots,m$, the  vector field
$\varepsilon \big( \delta^{x}_{1/ \varepsilon} \big)_*X_i$ converges locally uniformly to $\widehat X_i^{x_0}$ as $\varepsilon \to 0$ and $x \to x_0$.

\begin{lemma}\label{lem:nilpot}
	Let $(a_n)_{n\in\mathbb N},(b_n)_{n\in\mathbb N}\subset [0,T]$, $\bar a\in [0,T]$, be such that $a_n,b_n\rightarrow \bar a$ and $a_n<b_n$ for any $n\in\mathbb N$.
	Given $u\in L^\infty([0,T],\mathbb{S}^{m-1})$  and $n\in\N$, define $u_n\in L^\infty([0,1],\mathbb{S}^{m-1})$ by
	\[ u_n(\tau)=u(a_n+\tau(b_n-a_n)).\]
	Assume that the sequence
	$(u_n)_{n\in\mathbb N}$  converges to
 $u_\star\in L^\infty([0,1],\mathbb R^{m})$ for the weak-$\star$ topology of $L^\infty([0,1],\mathbb{R}^{m})$  and, moreover, that the trajectory $\gamma:[0,T]\to M$ associated with $u$  is a length-minimizer. If $x=\gamma(\bar a)$, then the trajectory $\gamma_\star:[0,1]\to \widehat M_{x}$ satisfying
$$
\dot\gamma_\star(s) = \sum_{i=1}^m u_{\star,i}(s)\widehat X_i^x(\gamma_\star(s)),\qquad \gamma_\star(0)= 0,
$$
is also a length-minimizer. In particular, $u_\star(t)\in\mathbb S^{m-1}$ for almost every $t\in [0,1]$.
\end{lemma}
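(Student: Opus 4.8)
The plan is to pass to a blow-up limit at the point $x = \gamma(\bar a)$ using the dilations $\delta_\nu^x$, and to show that length-minimality is preserved under this limit. First I would consider, for each $n$, the restriction $\gamma|_{[a_n,b_n]}$, reparametrized to the unit interval via $s \mapsto \gamma(a_n + s(b_n-a_n))$; this is a length-minimizer between $\gamma(a_n)$ and $\gamma(b_n)$ of length $b_n - a_n$ (since $\gamma$ is arclength-parametrized), controlled by $u_n$. Next I would rescale in space: set $\gamma_n(s) = \delta_{1/\varepsilon_n}^x\big(\gamma(a_n + s(b_n-a_n))\big)$ with $\varepsilon_n = b_n - a_n \to 0$, so that $\gamma_n(0) = \delta_{1/\varepsilon_n}^x(\gamma(a_n)) \to 0$ (because $\gamma(a_n) \to x$ and the privileged coordinates of $x$ are the origin, with the convergence rate matching the dilation weights — this is exactly the estimate underlying the ball-box theorem). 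The curve $\gamma_n$ satisfies
\[
\dot\gamma_n(s) = \sum_{i=1}^m u_{n,i}(s)\, \varepsilon_n\big(\delta_{1/\varepsilon_n}^x\big)_* X_i\big(\gamma_n(s)\big),
\]
i.e. it is a trajectory of the rescaled vector fields $X_i^{\varepsilon_n} := \varepsilon_n(\delta_{1/\varepsilon_n}^x)_* X_i$ controlled by $u_n$.

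The core step is then a limiting argument. By the remarks preceding the lemma, $X_i^{\varepsilon_n} \to \widehat X_i^x$ locally uniformly (together with their derivatives, since the convergence is in the $C^\infty_{loc}$ sense for the standard nilpotentization). Since $\|u_n\|_{L^\infty} = 1$ uniformly and $u_n \rightharpoonup u_\star$ weakly-$\star$, and since the fields $X_i^{\varepsilon_n}$ are uniformly Lipschitz on compacts, a standard ODE stability result (e.g. the trajectories $\gamma_n$ are equi-Lipschitz, hence by Arzel\`a--Ascoli converge uniformly on $[0,1]$ up to a subsequence, and the weak-$\star$ convergence of the controls combined with the uniform convergence of the vector fields passes to the limit in the integral form of the ODE) shows that $\gamma_n \to \gamma_\star$ uniformly, where $\gamma_\star$ is the trajectory of $\widehat X_1^x,\dots,\widehat X_m^x$ controlled by $u_\star$ with $\gamma_\star(0) = 0$. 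This identifies the limit curve with the one in the statement.

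It remains to show $\gamma_\star$ is a length-minimizer in $\widehat M_x$ and that $u_\star$ takes values in $\mathbb S^{m-1}$ a.e. For minimality I would argue by contradiction: if some horizontal curve $\eta$ in $\widehat M_x$ joined $0$ to $\gamma_\star(1)$ with $\ell_{\widehat M_x}(\eta) < \ell_{\widehat M_x}(\gamma_\star) =: L$, then using the uniform convergence of the sub-Riemannian distances $d_n := \varepsilon_n^{-1}\, d_M\circ(\delta_{\varepsilon_n}^x \times \delta_{\varepsilon_n}^x)$ to $\widehat d_x$ (the Gromov--Mitchell convergence of rescaled sub-Riemannian distances, valid uniformly in $x$ near $x_0$ by equiregularity), one would get $d_M(\gamma(a_n),\gamma(b_n)) \le \varepsilon_n(\ell_{\widehat M_x}(\eta) + o(1)) < \varepsilon_n L \le b_n - a_n = \ell(\gamma|_{[a_n,b_n]})$ for $n$ large, contradicting that $\gamma|_{[a_n,b_n]}$ is a length-minimizer. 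Hence $L \le \widehat d_x(0,\gamma_\star(1))$; since $\gamma_\star$ has length exactly $L$ by lower semicontinuity of length under weak-$\star$ convergence of controls (so $\int_0^1 |u_\star|\,ds \le \liminf \int_0^1 |u_n|\,ds = 1$, giving $\ell_{\widehat M_x}(\gamma_\star) \le 1$, and in fact one needs $L = \widehat d_x(0,\gamma_\star(1))$ from the reverse inequality $\widehat d_x(0,\gamma_\star(1)) \le \ell_{\widehat M_x}(\gamma_\star)$), $\gamma_\star$ is a length-minimizer. Finally, a length-minimizer in a sub-Riemannian manifold, being optimal, must be parametrized so that $|u_\star| \le 1$; if $|u_\star(t)| < 1$ on a positive-measure set then $\ell_{\widehat M_x}(\gamma_\star) < 1$, yet $\widehat d_x(0,\gamma_\star(1)) = \lim \varepsilon_n^{-1} d_M(\gamma(a_n),\gamma(b_n)) = \lim 1 = 1$ (using arclength parametrization of $\gamma$ and again distance convergence) — wait, this needs $d_M(\gamma(a_n),\gamma(b_n))/\varepsilon_n \to 1$, which holds since $\gamma|_{[a_n,b_n]}$ is a minimizer of length $\varepsilon_n$ — forcing $\ell_{\widehat M_x}(\gamma_\star) = 1$ and hence $|u_\star| = 1$ a.e.

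The main obstacle I expect is the uniform (in the base point $x$ near $x_0$) control of the convergence of the rescaled distances and the origin-convergence $\delta_{1/\varepsilon_n}^x(\gamma(a_n)) \to 0$: one must be careful that $\gamma(a_n)$ approaches $x = \gamma(\bar a)$ at a rate compatible with the dilation weights $w_i$, and that the continuity of the privileged coordinates $z^x$ in $x$ (granted by equiregularity, as recalled before the lemma) is strong enough to make the ball-box-type estimates uniform. This is precisely the point where the equiregularity reduction of Lemma \ref{le:desing} and the continuous choice of privileged coordinates are essential.
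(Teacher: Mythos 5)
Your proposal follows the same blow-up strategy as the paper, but there is a genuine gap at the very first step, and it is exactly the point you flag as ``the main obstacle'' without resolving it. You dilate with a \emph{fixed} base point $x=\gamma(\bar a)$, setting $\gamma_n(s)=\delta^x_{1/\varepsilon_n}(\gamma(a_n+s\varepsilon_n))$ with $\varepsilon_n=b_n-a_n$, and you claim $\gamma_n(0)=\delta^x_{1/\varepsilon_n}(\gamma(a_n))\to 0$ by the ball--box estimate. This is false in general: ball--box gives $|z^x_i(\gamma(a_n))|\lesssim d(\gamma(a_n),x)^{w_i}$, hence $|\delta^x_{1/\varepsilon_n}(\gamma(a_n))_i|\lesssim (d(\gamma(a_n),x)/\varepsilon_n)^{w_i}$, and since $d(\gamma(a_n),x)\le|\bar a-a_n|$ can be of the same order as $\varepsilon_n$ (e.g.\ $\bar a=(a_n+b_n)/2$, or $\bar a$ approached from one side), the ratio $d(\gamma(a_n),x)/\varepsilon_n$ is only bounded, not infinitesimal. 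A concrete counterexample: in the Heisenberg group take $\gamma(t)=(t,0,0)$, $\bar a=0$, $a_n=-\varepsilon_n/2$, $b_n=\varepsilon_n/2$; then $\gamma_n(0)=(-1/2,0,0)$ for all $n$. So your limit curve starts at a wrong (and a priori subsequence-dependent) point, whereas the lemma asserts minimality of the specific solution with $\gamma_\star(0)=0$. One could try to repair this by invoking left-invariance of the Carnot structure to translate the limit back to the origin, but you never say this, and it requires an extra extraction of a convergent subsequence of $(\gamma_n(0))_n$.

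The paper sidesteps the issue entirely by centering the dilations at the \emph{moving} point $\gamma(a_n)$, i.e.\ $\gamma_n(\tau)=\delta^{a_n}_{1/\varepsilon_n}(\gamma(a_n+\tau\varepsilon_n))$, so that $\gamma_n(0)=0$ identically, and then using the two-parameter locally uniform convergence $\varepsilon(\delta^{x}_{1/\varepsilon})_*X_i\to \widehat X_i^{x_0}$ as $\varepsilon\to 0$ and $x\to x_0$ (recalled in the paragraph before the lemma, and requiring the continuously varying privileged coordinates granted by equiregularity) to pass to the limit in the ODE. Your minimality argument is morally the same as the paper's---both rest on Bellaiche's theorem on convergence of rescaled distances and on weak-$\star$ lower semicontinuity of the $L^1$ norm---although the paper's phrasing is more direct: it computes $\widehat d(0,\gamma_\star(1))=\lim_n \frac{1}{\varepsilon_n}d(\gamma(a_n),\gamma(b_n))=1$ and $\ell(\gamma_\star)=\|u_\star\|_{L^1}\le 1$, which together with $\ell(\gamma_\star)\ge\widehat d(0,\gamma_\star(1))$ forces equality; your contradiction-against-a-shorter-competitor phrasing is equivalent but more convoluted. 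The final deduction $|u_\star|=1$ a.e.\ matches the paper's.
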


\begin{proof}
We consider a continuously varying family of privileged coordinates $z^{\gamma(t)}$, $t\in[0,T]$, and the corresponding 1-parameter family of dilations $\delta_\nu^t:=\delta_\nu^{\gamma(t)}$.
It is not restrictive to assume
 that $\delta^{a_n}_{\frac1{b_n-a_n}}\gamma(t)$ is well-defined  for every $n\in\mathbb N$ and $t\in[a_n,b_n]$.

	Let $\gamma_n$ be defined by $\gamma_n(\tau)=\delta^{a_n}_{\frac1{b_n-a_n}}\left(\gamma(a_n+\tau(b_n-a_n))\right)$. Then, $\gamma_n$ is a length-minimizing curve for the sub-Riemannian structure on $\R^n$ with orthonormal frame
	\[(b_n-a_n)\Big(\delta^{a_{n}}_{\frac{1}{b_n-a_n}}\Big)_*X_1, \ldots, (b_n-a_n)\Big(\delta^{a_{n}}_{\frac{1}{b_n-a_n}}\Big)_*X_m.\]
	 The corresponding control is $u_n$.

	Since the sequence $\Big((b_n-a_n)\Big(\delta^{a_{n}}_{\frac{1}{b_n-a_n}}\Big)_*X_i\Big)_{n\in\mathbb N}$ converges locally uniformly to $\widehat X_i^x$, it follows by standard ODE theory that $(\gamma_n)_{n\in\mathbb N}$  converges uniformly to $\gamma_\star$.

We claim that $\widehat d(\gamma_\star(0),\gamma_\star(1))=1$. Indeed,
$\ell(\gamma_\star)\ge \widehat d(\gamma_\star(0),\gamma_\star(1))$ and, by \cite[Theorem 7.32]{Bellaiche1996}, we have
\begin{align*}
		\widehat d(\gamma_\star(0),\gamma_\star(1)) &= \lim_{n\to\infty} \frac{1}{b_n-a_n} d(\gamma(a_n), \gamma (b_n))
\\ &= \lim_{n\to\infty} \int_0^1 |u(a_n+\tau(b_n-a_n))|\,d\tau = 1,
	\end{align*}
where $|\cdot|$ denotes the norm in $\R^{m}$.
	 On the other hand, by weak-$\star$ convergence we have
	\begin{equation*}
		\ell(\gamma_\star)=\|u_\star\|_{L^1([0,1],\mathbb R^{m})}\le \liminf_{n\to\infty}\|u_n\|_{L^1([0,1],\mathbb R^{m})}=1,
	\end{equation*}	
	 proving the claim.
	
	To conclude the proof, it suffices now to observe that the above implies that $\gamma_\star$ is minimizing. In particular, since $|u_\star(t)|\leq 1$ a.e.\ on $[0,1]$ by the properties of weak-$\star$ convergence, this shows that $|u_\star(t)|= 1$ a.e.\ on $[0,1]$.
\end{proof}

\begin{cor}
Let $\gamma$, $u$, $\bar a$, $(a_n)_{n\in\mathbb N}$, $(b_n)_{n\in\mathbb N}$, and $u_\star$ be as in Lemma~\ref{lem:nilpot}. Assume that there exist $u_+,u_-\in \mathbb{S}^{m-1}$ such that $u_\star=u_-$ almost everywhere on $[0,1/2]$ and
$u_\star=u_+$ almost everywhere on $[1/2,1]$.
Then $u_-=u_+$.
\end{cor}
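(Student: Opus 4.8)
The plan is to use that, by Lemma~\ref{lem:nilpot}, the limit curve $\gamma_\star$ is itself a length-minimizer of the Carnot group $\widehat M_x$, parametrized by arclength, whose control $u_\star$ is piecewise constant with a single possible jump at $s=1/2$. Arguing by contradiction, I would assume $u_-\neq u_+$ and derive a contradiction with Theorem~\ref{t:HLK}. First I would record the trivial regularity of $\gamma_\star$: on $[0,1/2]$ it solves $\dot\gamma_\star=\sum_i u_{-,i}\widehat X_i^x(\gamma_\star)$ and on $[1/2,1]$ it solves $\dot\gamma_\star=\sum_i u_{+,i}\widehat X_i^x(\gamma_\star)$, both autonomous ODEs with smooth right-hand side; hence $\gamma_\star$ is $C^\infty$ on each of the two subintervals, and the one-sided derivatives at $s=1/2$ exist and are given by $v_\mp:=\sum_i u_{\mp,i}\widehat X_i^x(q)$, where $q:=\gamma_\star(1/2)$. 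Since restrictions of length-minimizers to subintervals are again length-minimizers, the translated curve $\tilde\gamma(s):=\gamma_\star(1/2+s)$, $s\in[-1/2,1/2]$, is a horizontal, arclength-parametrized length-minimizer of $\widehat M_x$, and in the linear coordinates of the Carnot group $\widehat M_x\cong\R^n$ (which serve as local coordinates) one has $\dot{\tilde\gamma}^{+}(0)=v_+$ and $\dot{\tilde\gamma}^{-}(0)=v_-$.

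The argument then splits into two cases. If $v_+\neq v_-$, then $\tilde\gamma$ has a genuine corner at $0$, so Theorem~\ref{t:HLK} implies that $\tilde\gamma$ is not a length-minimizer, a contradiction. If instead $v_+=v_-=:v$, I would invoke the arclength normalization: $\gamma_\star$ is parametrized by arclength (from Lemma~\ref{lem:nilpot} and its proof, since $\ell(\gamma_\star)=\|u_\star\|_{L^1}=1$ forces $\widehat g_{\gamma_\star}(\dot\gamma_\star,\dot\gamma_\star)=|u_\star|^2=1$ a.e.), and, the horizontal distribution of a Carnot group being a left-invariant constant-rank subbundle, $\widehat g$ is continuous; letting $s\uparrow 1/2$ along $(0,1/2)$, where $s\mapsto\widehat g_{\gamma_\star(s)}(\dot\gamma_\star(s),\dot\gamma_\star(s))$ is continuous and identically $1$, yields $\widehat g_q(v,v)=1$. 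Therefore $|u_-|^2=|u_+|^2=1=\widehat g_q(v,v)=\inf\{\,|u|^2 : \sum_i u_i\widehat X_i^x(q)=v\,\}$, and since this infimum of a strictly convex coercive function over a nonempty affine subspace of $\R^m$ is attained at a unique point, $u_-=u_+$, again a contradiction. Hence $u_-=u_+$, as claimed.

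I expect the only genuinely delicate point to be checking that $\gamma_\star$ really presents a corner in the sense demanded by Theorem~\ref{t:HLK} once $u_-\neq u_+$, i.e., that the one-sided velocity limits exist (handled by piecewise smoothness of $\gamma_\star$) and differ. In the rank $2$ situation that is our main concern this is immediate, because $\widehat X_1^x(q),\widehat X_2^x(q)$ are linearly independent, so only the first case occurs; the second case above is needed only to accommodate a possible rank drop of the $m$-element generating frame in the general setting. The remaining ingredients — piecewise smoothness of $\gamma_\star$, stability of minimality under restriction, and uniqueness of the minimal-norm control representative — are standard and require no serious work.
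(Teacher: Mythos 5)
Your proposal is correct and follows the same route as the paper's (one-line) proof: apply Theorem~\ref{t:HLK} to the limit minimizer $\gamma_\star$ from Lemma~\ref{lem:nilpot} and derive a contradiction from the corner at $s=1/2$. The only difference is that you also explicitly rule out the degenerate case $v_+=v_-$ (possible only if the nilpotentized frame were linearly dependent at $\gamma_\star(1/2)$), a point the paper leaves implicit because in its rank-$2$ equiregular setting the frame $\widehat X_1^x,\widehat X_2^x$ is a basis of the horizontal layer at every point.
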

\begin{proof}
If $u_-\ne u_+$, then $\gamma_\star$ is not length-minimizing by Theorem~\ref{t:HLK}, which contradicts Lemma~\ref{lem:nilpot}.
\end{proof}

\section{Dynamics of abnormal extremals: preliminary results} \label{s:dyn1}

In this section we present the dynamical system associated with the abnormal extremal, whose analysis is the basis for the proof of Theorem~\ref{t:main}, and we derive a first result on its structure.

\subsection{Introduction to the dynamical system} \label{s:intro_dyn}
Let $(M, \distr, g)$ be an equiregular sub-Riemannian manifold of rank $2$. Since the arguments are local, in what follows we fix  a local generating frame $\{X_1,X_2\}$ of $(\distr,g)$.

Consider an abnormal length-minimizer $\gamma:[0,T]\to M$ parametrized by arclength. Then $T=d(\gamma(0),\gamma(T))$ and there exists $u\in L^\infty([0,T],\mathbb{S}^1)$ such that
\begin{equation*}
	\dot\gamma(t) = u_1(t)X_1(\gamma(t)) + u_2(t)X_2(\gamma(t)),\quad \mbox{a.e.~}t\in[0,T].
\end{equation*}
Moreover from Theorem~\ref{t:utile}, $\gamma$ admits a lift $\lambda:[0,T]\to T^*M$ which satisfies
$$
\dot{\lambda}(t) =   u_1 \vec{h}_1(\lambda(t)) + u_2 \vec{h}_2(\lambda(t)) \quad \hbox{and} \quad  h_1(\lambda(t)) \equiv h_2(\lambda(t)) \equiv 0.
$$
 By a slight abuse of notation, set $h_i(t) = \langle\lambda(t), X_i(\gamma(t))\rangle$, $i=1,2$, and
for every $i_1,\dots, i_m\in\{1,2\}$,
\begin{equation*}
	h_{i_1\cdots i_m}(t) =
	\langle \lambda(t), [X_{i_1},\ldots,[X_{i_m-1},X_{i_m}]](\gamma(t)) \rangle.
\end{equation*}
Such a function
$h_{i_1\cdots i_m}$ is  absolutely continuous and satisfies
\begin{equation}\label{eq:poisson}
	\dot h_{i_1\cdots i_m}(t) = u_1(t)h_{1i_1\cdots i_m}(t) + u_2(t)h_{2i_1\cdots i_m}(t) \quad \text{for a.e. } t\in[0,T].
\end{equation}
Differentiating the equalities $h_1 \equiv h_2 \equiv 0$ and using \eqref{eq:poisson} we obtain $h_{12} \equiv 0$. Differentiating again we get
\begin{equation}\label{eq:diff-h}
	0 = \dot h_{12} = u_1 h_{112}+ u_2 h_{212} \quad \text{a.e. on } [0,T].
\end{equation}

\begin{rmk}
The identities $h_1(t)=h_2(t)=h_{12}(t)=0$ imply that $\lambda(t)\in (\distr^{2})^{\perp}$ for every $t$. The latter  is known as \emph{Goh condition} and is in general (i.e., for sub-Riemannian structures of any rank) a necessary condition for the associated curve to be length-minimizing \cite{AS99}.
 It is known that a generic sub-Riemannian structure of rank larger than $2$  does not have non-constant abnormal extremals satisfying the Goh condition \cite{CJT}.
\end{rmk}

Let $h=(-h_{212},h_{112})$ and $(t_0,t_1)\subset(0,T)$ be a maximal (i.e., non-extendable)
open interval on which $h\neq0$.  Equation~\eqref{eq:diff-h} then implies that $u =\pm h/|h|$ almost everywhere on $(t_0,t_1)$.

Moreover, by length-minimality  of $\gamma$  we can assume without loss of generality
that $u = h/|h|$  on $(t_0,t_1)$ (see Lemma~\ref{lem:constantsign} in the appendix).
Thus $\gamma$ may be non-differentiable only at a time $t$ such that $h(t)=0$. In particular, if the step of the sub-Riemannian structure is not greater than $3$, then $\gamma$ is differentiable everywhere. We assume from now on that the step is at least $4$.

Observe that
from \eqref{eq:poisson} and using $u=h/|h|$ one obtains
\begin{equation}\label{eq:Ah}
	\dot h = A\frac{h}{|h|},
	\qquad
	A = \left(
	\begin{array}{cc}
		-h_{2112} & -h_{2212} \\
		h_{1112} & h_{2112} \\
	\end{array}
	\right), \qquad \text{on }(t_0,t_1).
\end{equation}
Here, we used the  relation $h_{1212} = h_{2112}$, which follows from the Jacobi identity
\begin{align*}
[X_{1},[X_{2},[X_{1},X_{2}]]]&=-[[X_{1},X_{2}],[X_{1},X_{2}]]-[X_{2},[[X_{1},X_{2}],X_{1}]]
=[X_{2},[X_{1},[X_{1},X_{2}]]].
\end{align*}
Observe that the matrix $A$ has zero trace and is  absolutely continuous on the whole interval $[0,T]$.

\begin{lemma}\label{l:dis}
Assume that
$\lambda(t) \notin (\distr^{4}_{\gamma(t)})^{\perp}$ for every $t\in[0,T]$. If $h(t_{0})=0$ for some $ t_{0}\in [0,T]$, then
$A(t_{0})\ne 0$.
\end{lemma}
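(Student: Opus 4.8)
The plan is to argue by contradiction: suppose $h(t_0)=0$ and $A(t_0)=0$, and show that this forces $\lambda(t_0)\in(\distr^4_{\gamma(t_0)})^\perp$, contradicting the hypothesis. The key observation is that the entries of $A$ are precisely (up to sign) the four functions $h_{1112},h_{1212}=h_{2112},h_{2212}$, which together with $h_{112}$ and $h_{212}$ (the components of $h$, also vanishing at $t_0$ by assumption) span the pairing of $\lambda(t_0)$ against all brackets of length $4$. Concretely, $\distr^4_{\gamma(t_0)}$ is spanned by $\distr^3_{\gamma(t_0)}$ together with the length-$4$ brackets; and since the Goh condition already gives $\lambda(t_0)\perp\distr^2_{\gamma(t_0)}$, while $h(t_0)=0$ gives $\lambda(t_0)\perp\distr^3_{\gamma(t_0)}$ (as $h_{112},h_{212}$ pair $\lambda(t_0)$ against a frame of $\distr^3/\distr^2$), it remains only to check that $\lambda(t_0)$ annihilates the length-$4$ brackets modulo $\distr^3$. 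The length-$4$ brackets of $X_1,X_2$ are, modulo lower-order brackets and Jacobi identities, generated by $[X_1,[X_1,[X_1,X_2]]]$, $[X_2,[X_1,[X_1,X_2]]]$ ($=[X_1,[X_2,[X_1,X_2]]]$ by the Jacobi identity displayed above), and $[X_2,[X_2,[X_1,X_2]]]$ — i.e.\ by the multi-indices $1112$, $2112=1212$, and $2212$. Pairing with $\lambda(t_0)$ gives exactly $h_{1112}(t_0)$, $h_{2112}(t_0)$, $h_{2212}(t_0)$, which are (up to sign) the entries of $A(t_0)$.

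So the main step is: if $A(t_0)=0$ and $h(t_0)=0$, then $h_{1112}(t_0)=h_{2112}(t_0)=h_{2212}(t_0)=h_{112}(t_0)=h_{212}(t_0)=0$, and together with the Goh identities $h_1(t_0)=h_2(t_0)=h_{12}(t_0)=0$ this says $\langle\lambda(t_0),Y(\gamma(t_0))\rangle=0$ for every iterated bracket $Y$ of $X_1,X_2$ of length $\le 4$. Since the sub-Riemannian manifold is equiregular and of rank $2$, $\distr^4_{\gamma(t_0)}$ is spanned by the evaluations at $\gamma(t_0)$ of brackets of length $\le 4$; hence $\lambda(t_0)\in(\distr^4_{\gamma(t_0)})^\perp$, the desired contradiction. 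I would spell out that $h_{2112}$ covers both multi-indices $2112$ and $1212$ via the Jacobi identity already recorded, so that no length-$4$ bracket is missed.

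The only genuine subtlety, and the step I expect to require the most care, is the bookkeeping of which length-$4$ iterated brackets actually span $\distr^4_x/\distr^3_x$ for a rank-$2$ distribution, and the verification that the five functions $h_{112},h_{212},h_{1112},h_{2112},h_{2212}$ (together with the already-vanishing $h_1,h_2,h_{12}$) suffice to detect membership in $(\distr^4_x)^\perp$. This is a free-Lie-algebra count: in the free Lie algebra on two generators, the degree-$3$ part is $2$-dimensional (spanned by $[X_1,[X_1,X_2]]$ and $[X_2,[X_1,X_2]]$) and the degree-$4$ part is $3$-dimensional (spanned by the three brackets above), and in any rank-$2$ Lie algebra the images of these span $\distr^3_x/\distr^2_x$ and contribute all of $\distr^4_x/\distr^3_x$. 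Once this is in place, the argument is immediate; no asymptotic analysis or ODE estimate is needed for this particular lemma, only the linear-algebraic identification of the flag in terms of the $h$-functions.
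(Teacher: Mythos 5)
Your proposal is correct and follows exactly the same route as the paper's proof: abnormality gives $\lambda(t_0)\perp\distr^1$, Goh gives $\lambda(t_0)\perp\distr^2$, $h(t_0)=0$ gives $\lambda(t_0)\perp\distr^3$, and $A(t_0)=0$ would give $\lambda(t_0)\perp\distr^4$, contradicting the hypothesis. The only difference is that you spell out the free-Lie-algebra bookkeeping (that the three multi-indices $1112$, $2112=1212$, $2212$ exhaust $\distr^4/\distr^3$ in rank $2$), which the paper leaves implicit.
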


\begin{proof}
The fact that $\gamma$ is abnormal implies that the non-zero covector $\lambda(t)$ annihilates $\distr_{\gamma(t)}$ for every $t\in[0,T]$. The Goh condition $h_{12} \equiv 0$ guarantees that it also annihilates $\distr^{2}_{\gamma(t)}$. The fact that $h(t_{0})=0$ says, moreover, that $\lambda(t_{0})$ annihilates  $\distr^{3}_{\gamma(t_0)}$.
If $A(t_{0})$ is equal to zero, then $\lambda(t_{0})$ annihilates  $\distr^{4}_{\gamma(t_0)}$, which contradicts the assumption.
\end{proof}

\subsection{The sign of $\det A$ is non-negative where $h$ vanishes}

A key step in the proof of Theorem~\ref{t:main} is the following result.

\begin{prop}\label{prop:nonpos}
Let $(t_0,t_1)$ be a maximal open interval of $[0,T]$ on which $h\ne 0$ and assume that $t_1<T$. Then
$\det A(t_1)\le 0$.
\end{prop}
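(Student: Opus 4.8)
The plan is to analyze the endpoint $t_1$ of the maximal interval $(t_0,t_1)$ where $h\neq 0$, using the fact that $h(t_1)=0$ (by maximality, since $t_1<T$ and $h$ is continuous) together with the dynamics \eqref{eq:Ah}. First I would observe that, by Lemma~\ref{l:dis}, $A(t_1)\neq 0$. Writing $g(t)=h(t)/|h(t)|\in\mathbb{S}^1$ for $t\in(t_0,t_1)$ and using that $A$ has zero trace, $\det A(t_1)\le 0$ is equivalent to $A(t_1)$ having two real eigenvalues (one nonnegative, one nonpositive), whereas $\det A(t_1)>0$ would mean $A(t_1)$ has a pair of nonzero purely imaginary eigenvalues $\pm i\omega$, i.e.\ it generates a rotation. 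The strategy is to show that if $\det A(t_1)>0$, then $h$ cannot actually reach $0$ at a finite time $t_1$ while solving $\dot h=A h/|h|$, contradicting maximality with $t_1<T$.

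The key computation is to track the behavior of $|h|$ and of the angle of $h$ as $t\uparrow t_1$. Set $\rho(t)=|h(t)|$. From $\dot h=A h/|h|$ one gets $\dot\rho = \langle \dot h, h\rangle/|h| = \langle A h, h\rangle/|h|^2 = \langle A(t) g(t), g(t)\rangle$, which is a bounded quantity since $A$ is continuous (hence bounded) on $[0,T]$ and $|g|=1$. Thus $\rho$ is uniformly Lipschitz near $t_1$, and more importantly $\dot\rho(t) = q_{A(t)}(g(t))$ where $q_A(v)=\langle Av,v\rangle$. Now if $\det A(t_1)>0$, then $A(t_1)$ is a nonzero traceless matrix whose symmetric part $\tfrac12(A(t_1)+A(t_1)^\top)$ is indefinite or — crucially for the rotation case — the quadratic form $q_{A(t_1)}$ is a nonzero traceless quadratic form, hence sign-indefinite and vanishing only on a one-dimensional cone of directions. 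The heart of the argument is that as $\rho(t)\to 0$, the direction $g(t)$ is governed by a fast angular equation: writing $g=(\cos\theta,\sin\theta)$, one finds $\dot\theta = \frac{1}{\rho}\,p_{A(t)}(g(t))$ where $p_A(v)=\langle Av, v^\perp\rangle$ with $v^\perp$ the rotation of $v$ by $\pi/2$; when $\det A(t_1)>0$ this $p_{A(t_1)}$ is bounded away from $0$ (it is the "rotational part"), so $\dot\theta$ blows up like $1/\rho$ with a fixed sign. I would then show that this forces $g(t)$ to rotate through all directions infinitely often as $t\uparrow t_1$ unless $\rho$ stays bounded away from $0$: integrating $\dot\rho = q_{A(t)}(g(t))$ over the fast rotation, the average of $q_{A(t_1)}$ over a full turn of $g$ is $\tfrac12\tr A(t_1)=0$, so $\rho$ cannot monotonically decrease to $0$; a more careful version shows $\rho$ is in fact bounded below near $t_1$, contradicting $\rho(t_1)=0$.

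The main obstacle I anticipate is making the averaging/blow-up argument rigorous when $A$ is merely absolutely continuous (not $C^1$), since the "fast angular variable" heuristic needs the coefficients of $p_A$ and $q_A$ to be essentially frozen at their values at $t_1$ over each fast period — this requires a quantitative estimate comparing $A(t)$ to $A(t_1)$ against the (diverging) rotation speed $1/\rho(t)$, and one must rule out the degenerate possibility that $\rho$ and the time scale conspire. A cleaner route, which I would try first, is to argue by contradiction via a Lyapunov-type functional: on $(t_0,t_1)$ consider the quantity $\det A(t)\,\rho(t)^{?}$ or, better, directly the scalar function $\det A(t)$ along the trajectory and differentiate it using \eqref{eq:poisson}, showing that near $t_1$ its sign is controlled — but since $\det A$ involves the fifth-order brackets $h_{ijkl}$, whose derivatives bring in sixth-order brackets not obviously controlled, the self-contained argument via $(\rho,\theta)$ is likely the intended one. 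I would therefore organize the proof as: (i) reduce to showing $\det A(t_1)>0$ is impossible; (ii) derive the $(\rho,\theta)$ system and the bound $|\dot\rho|\le C$, $|\dot\theta - \rho^{-1}p_{A(t)}(g)|$ controlled; (iii) use $\det A(t_1)>0 \Rightarrow |p_{A(t_1)}(g)|\ge c>0$ for all $g\in\mathbb{S}^1$ to get $\theta$ monotone and $|\dot\theta|\gtrsim 1/\rho$; (iv) average $\dot\rho$ over fast turns using $\int_0^{2\pi} q_{A(t_1)}(e^{i\theta})\,d\theta = \pi\,\tr A(t_1)=0$ plus an error term $o(1)$ from the continuity of $A$, to conclude $\rho$ is bounded below near $t_1$; (v) contradiction with $h(t_1)=0$.
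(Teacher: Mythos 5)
Your strategy is in the right spirit---the paper also argues by contradiction assuming $\det A(t_1)>0$, passes to polar coordinates $h=\rho e^{i\theta}$, and exploits that the angular drift $w := p_A(g)=\langle Ag,g^\perp\rangle$ is bounded away from zero (your step (iii) is correct: $\det A(t_1)>0$ with $\tr A=0$ is exactly equivalent to $p_{A(t_1)}$ having a fixed sign on $\mathbb{S}^1$). But step (iv) has a genuine gap. The quantity $\frac{1}{2\pi}\int_0^{2\pi} q_{A(t_1)}(e^{i\theta})\,d\theta=\frac12\tr A(t_1)=0$ is the \emph{unweighted} angular average of $\dot\rho$, whereas the change in $\rho$ over one fast turn is the \emph{time-weighted} integral $\Delta\rho=\rho\int_0^{2\pi}(q_A/p_A)\,d\theta$, since $dt=\rho\,d\theta/p_A(g)$. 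These are different unless $p_A$ is constant, i.e.\ after diagonalizing $A(t_1)$, which you do not propose. It so happens (again because $\tr A=0$) that $q_A=-\tfrac12\,\partial_\theta p_A$, so $\int_0^{2\pi}(q_A/p_A)\,d\theta=0$ as well, but this identity is the one you need and it is not in your write-up. Moreover you flag, but do not resolve, the accumulation of the $o(1)$ errors across infinitely many fast periods with $A$ only absolutely continuous; ``$o(1)$ per period'' can sum to infinity, and closing this requires the finer observation that the per-period drift in $\log(\rho^2 w)$ is $O(\rho)\sim O(\text{period length})$, so the total drift is finite.

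The paper avoids all of this by finding the exact Lyapunov quantity that your ``cleaner route'' was groping for: not $\det A(t)\,\rho^{?}$, but $\rho^2 w$ with $w=p_A(g)$, which depends on the direction $g$ as well as on $A$. After diagonalizing ($P^{-1}A(t_1)P=\bigl(\begin{smallmatrix}0&-a\\a&0\end{smallmatrix}\bigr)$, $a>0$) and a time reparametrization, the identity $q_A=-\tfrac12\,\partial_\theta p_A$ produces a complete cancellation of all $\rho,\theta$-dependence in $(\rho^2 w)'$, leaving only $\rho^2(\alpha'\sin2\theta+\mu'\cos2\theta+\eta')\ge -M\rho^2 w$, where the primes denote bounded quantities (linear combinations of fifth-order brackets via~\eqref{eq:poisson}). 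The Gronwall bound $e^{Ms}\rho^2 w$ nondecreasing, together with $s(t_1)<\infty$, gives the contradiction with $\rho\to 0$ in one line, with no averaging or error-accumulation estimate needed.
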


\begin{proof}
	Assume by contradiction that $\det A(t_1)>0$. Since $\tr A(t_{1}) = 0$, there exists $P\in {\rm GL}(2,\R)$ such that
	\begin{equation}\label{eq:P}
		P^{-1}A(t_1)P =
		\left(
		\begin{array}{cc}
			0 & -a \\
			a & 0
		\end{array}
		\right), \qquad  a>0.
	\end{equation}
	Define  the scalar functions $\alpha$, $\beta$ and $\zeta$ through the relation
	\begin{equation*}
		 P^{-1}A(t)P = \left(
		\begin{array}{cc}
			-\alpha(t) & \beta(t) \\
			\zeta(t) & \alpha(t)
		\end{array}
		\right),
	\end{equation*}
and notice that  $\alpha,\beta,\zeta$ are
absolutely continuous with bounded derivatives on $(t_0,t_1)$, since they are
linear combinations of $h_{2112}, h_{2212}, h_{1112}$, according to \eqref{eq:poisson}. Clearly, \eqref{eq:P} implies that $\alpha(t)\rightarrow 0$, $\beta(t)\rightarrow- a$, and $\zeta(t)\rightarrow a$ as $t\to t_1$.

	Consider a time rescaling and  a polar coordinates representation so that
	$P^{-1}h(t) = \rho(s(t)) e^{i\theta(s(t))}$, where $$s(t):=\int_{t_{0}}^{t}\frac{|P^{-1}h(\tau)|}{|h(\tau)|}d\tau.$$
	It is useful to introduce $\mu := (\zeta+\beta)/2 $ and $\eta := (\zeta-\beta)/2$.
	Then, denoting by $\rho'$ and $\theta'$ the derivatives of $\rho$ and $\theta$ with respect to the parameter $s$, \eqref{eq:Ah} can be rewritten as
	\begin{equation*}
	\begin{cases}
		\rho'= (-\alpha\cos2\theta + \mu\sin2\theta), \\
		\theta'  =\frac{1}{\rho}(\alpha\sin2\theta+\mu\cos2\theta+\eta).
		 \end{cases}
	\end{equation*}
Let $w=\alpha\sin2\theta+\mu\cos2\theta+\eta$ and  	notice that $2a>w>a/2$ in a left-neighborhood of $s(t_{1})$.
Therefore,
\begin{align*}
(\rho^2w)' &= 2\rho(-\alpha\cos2\theta + \mu\sin2\theta) w+\rho^2(\alpha'\sin2\theta+\mu'\cos2\theta+\eta')\\
&\quad +\rho^2 (\alpha\cos2\theta-\mu\sin2\theta)2\theta'\\
 &=\rho^2w\frac{\alpha'\sin2\theta+\mu'\cos2\theta+\eta'}{w}\ge -M \rho^2w,
\end{align*}
for some constant $M>0$.  This implies at once that $t\mapsto e^{Mt}\rho^2(t)w(t)$ is increasing, and hence
that it is impossible for $\rho^2w$ to tend to zero as $s\rightarrow s(t_1)$. This contradicts the assumption that $\rho(t)\to 0$ as $t\rightarrow t_{1}$, completing the proof of the statement.
\end{proof}

\section{Dynamics of abnormal extremals in a special case: proof of Proposition~\ref{t:main2}} \label{s:r2s4}
In this section we prove Proposition~\ref{t:main2}.
We present it here to illustrate in a simpler context the general procedure
used later to complete the proof of Theorem~\ref{t:main}.

Assume that $\distr$ is generated by two vector fields $X_{1},X_{2}$ such that the Lie algebra $\mathrm{Lie}\{X_{1},X_{2}\}$ is nilpotent  of step at most $4$. This means that all Lie brackets of $X_{1},X_{2}$ of length 5 vanish.
%
In particular $\dim M\leq 8$.

\begin{proof}[Proof of Proposition \ref{t:main2}] Without loss of generality, we assume that the step is equal to 4. Recall that for an abnormal minimizer  on an interval $I$ we have
\begin{equation*}
h_1\equiv h_2\equiv h_{12}\equiv 0, \qquad 0 = \dot h_{12} = u_1 h_{112}+ u_2 h_{212} \quad \text{a.e. on } I.
\end{equation*}
The vector $h=(-h_{212},h_{112})$ satisfies the differential equation
\begin{equation}\label{eq:mastereq}
	\dot h = Au,
	\qquad
	A = \left(
	\begin{array}{cc}
		-h_{2112} & -h_{2212} \\
		h_{1112} & h_{2112} \\
	\end{array}
	\right), \quad \text{a.e. on } I.
\end{equation}
Notice that $A$ is a constant matrix (with zero trace), as follows from \eqref{eq:poisson} and the nilpotency assumption.

As we have already seen in the general case, on every interval
where $h(t)\neq 0$ we have
that $u$ is smooth and equal to  either
$\frac{h(t)}{|h(t)|}$ or $-\frac{h(t)}{|h(t)|}$.

 We are then reduced to the case where $h$ vanishes at some point $\bar t \in I$. In this case the matrix $A$ cannot be zero,
 as it follows from Lemma~\ref{l:dis}.

We consider  the following alternative:
\begin{itemize}
\item[(a)] $h(t)=0$ for all $t\in I$;
\item[(b)] $h$ does not vanishes identically on $I$.
\end{itemize}
Case (a). From \eqref{eq:mastereq} it follows that $u(t)$ is in the kernel of $A$ for a.e. $t\in I$. Since $u$ is nonzero for a.e. $t\in I$, then necessarily $A$ has one-dimensional kernel $\ker A=\mathrm{span}\{\bar u\}$, where $\bar u$ has norm one.
Then $u(t)=\sigma(t)\bar u$  for a.e.\ $t\in I$, with $\sigma(t)\in\{-1,1\}$  and
\begin{gather*}
	\dot\gamma(t) = \sigma(t) X_{\bar u}(\gamma(t)),\quad {\mbox{a.e.~$t\in I$,}}
\end{gather*}
with $X_{\bar u}$ a constant vector field. Since $\gamma$ is a length-minimizer then $\sigma$ is constant, and $u$ is smooth, thanks to Lemma~\ref{lem:constantsign} in the appendix.

\noindent Case (b).
Consider a maximal interval $J=(t_{0},t_{1})$ on which $h$ is never vanishing. Since $J\subsetneq I$, then either $h(t_{0})=0$ or $h(t_{1})=0$.

	The trajectories of \eqref{eq:Ah} are time reparametrizations of those of the linear system $\dot z =Az$. Hence $h$ stays in the stable or in the unstable manifold of $A$.
Recall that $\det A\le 0$ by Proposition~\ref{prop:nonpos} and notice that if $\det A= 0$ then $A$ is conjugate to the nilpotent matrix
$\begin{psmallmatrix} 0& 1\\0& 0
\end{psmallmatrix}
$.
Hence stable and unstable manifolds reduce to zero. We  deduce that  $\det A<0$.

Denote by $\lambda_{\pm}$ the eigenvalues of $A$ and by $v_{\pm}$ the corresponding  unit eigenvectors. Since $ h$ belongs to the stable (respectively, unstable) manifold of $A$ then  $\frac{h(t)}{|h(t)|}$ is constantly equal to $v_{-}$ or $-v_{-}$ on $J$ (respectively, $v_{+}$ or $-v_{+}$).
	Fix $t_{*}\in J$. Then  integrating \eqref{eq:mastereq} we get
	$$h(t)=h(t_*) \pm (t-t_*)\lambda_{-}v_{-},\qquad t\in J,$$
	or
	$$h(t)=h(t_*) \pm (t-t_*)\lambda_{+}v_{+},\qquad t\in J.$$
If $h(t_{1})=0$, then $\lim_{t\downarrow t_{0}}h(t)\neq 0$ and $J=I\cap (-\infty,t_{1})$.
Similarly, if $h(t_{0})=0$ then
 $\lim_{t\uparrow t_{1}}h(t)\neq 0$ and $J=I\cap (t_{0},+\infty)$.

If there exist two distinct maximal intervals of $I$ on which $h$ is never vanishing, then necessarily  there exist $\tau_{1}\leq \tau_{2}$  in $I$ such that these maximal intervals are of the form $J_{1}=I\cap (-\infty,\tau_{1})$ and $J_{2}=I\cap (\tau_{2},+\infty)$.  Notice that $h$ vanishes on $[\tau_{1},\tau_{2}]$.

If $\tau_{1}<\tau_{2}$, we can apply case (a) on the interval $(\tau_{1},\tau_{2})$, which leads to a contradiction since $A$ should have nontrivial kernel.
We are thus left to consider the case where $\tau_1=\tau_2=\bar t$, that is, when $h(t)\neq 0$ for $t\in I\setminus \{\bar t \}$. In this case  $u$ is  piecewise constant on $I\setminus \{\bar t \}$ and satisfies
\[
\lim_{t\downarrow \bar t} u(t)\in\{v_{-},-v_{-},v_{+},-v_{+}\},\quad \lim_{t\uparrow \bar t} u(t)\in\{v_{-},-v_{-},v_{+},-v_{+}\}.
\]
Theorem~\ref{t:HLK} and the length-minimizing assumption on $\gamma$ imply that
the two limits must be equal.
Hence, $u$ is constant on $I$, and in particular it is smooth.
\end{proof}

\begin{rmk}
  The technical ingredients of the above proof open the way to an alternative approach to the Sard conjecture for minimizers  \cite{AAA14}
   which is known in the free case \cite{LDMOPV16}.
   Indeed,
  assume that the hypotheses of Proposition \ref{t:main2}
  hold true and fix a  point $x\in M$.  We have proved that given any initial covector in $(\distr^2_x)^\perp$ there exist at most four length minimizing curves whose extremal lift starts with this covector.
  Hence, such curves are parametrized by at most $n-3$ parameters.
  By taking into account the time parametrization, the set of final points of abnormal minimizers starting from $x$ has codimension at most $2$.

   For recent results on the Sard conjecture for rank 2 structures in 3-dimensional manifolds, see \cite{BelottoRifford}, which extends the analysis in \cite{ZZ95}.
\end{rmk}

\section{Dynamics of abnormal extremals: the general case}\label{s:sis}

The goal of this section is to prove the following result.

\begin{prop}\label{prop:h/h}
Let $(t_0,t_1)$ be a maximal interval on which $h\ne0$. Assume that $t_1<T$ and $A(t_1)\ne0$. Then $u(t)$ has a limit as $t\uparrow t_1$, which is an eigenvector of $A(t_1)$. 
\end{prop}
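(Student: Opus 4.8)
The starting point is equation~\eqref{eq:Ah}, which says that on $(t_0,t_1)$ the curve $h$ follows, up to a (monotone, absolutely continuous) time reparametrization $s(t)$, the trajectories of the linear-in-$h/|h|$ system $\dot h = A(t)\,h/|h|$, with $A$ absolutely continuous up to $t_1$ and $A(t_1)\neq 0$. Since $\tr A\equiv 0$, there are three cases for $A(t_1)$: it has two distinct real eigenvalues $\pm\lambda$ with $\lambda>0$ (i.e.\ $\det A(t_1)<0$), it is conjugate to $\begin{psmallmatrix}0&1\\0&0\end{psmallmatrix}$ (i.e.\ $\det A(t_1)=0$, $A(t_1)\neq 0$), or it has purely imaginary eigenvalues $\pm i a$ (i.e.\ $\det A(t_1)>0$). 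The last case is excluded by Proposition~\ref{prop:nonpos}, so I only need to treat $\det A(t_1)\le 0$. The goal is to show $u(t)=h(t)/|h(t)|$ has a limit as $t\uparrow t_1$ and that this limit is an eigenvector of $A(t_1)$ — and in the two remaining cases $A(t_1)$ has a genuine eigenvector (the stable/unstable directions, or the single eigendirection of the nilpotent case).

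First I would reduce to a model problem by conjugating: pick $P\in\mathrm{GL}(2,\R)$ bringing $A(t_1)$ to its normal form and, exactly as in the proof of Proposition~\ref{prop:nonpos}, write $P^{-1}A(t)P=\begin{psmallmatrix}-\alpha&\beta\\\zeta&\alpha\end{psmallmatrix}$ with $\alpha,\beta,\zeta$ absolutely continuous and with bounded derivatives on $(t_0,t_1)$, converging at $t_1$ to the entries of the normal form. Passing to the rescaled variable and polar coordinates $P^{-1}h=\rho e^{i\theta}$, the system becomes $\rho'=\rho(-\alpha\cos2\theta+\mu\sin2\theta)$, $\theta'=\alpha\sin2\theta+\mu\cos2\theta+\eta$ with $\mu=(\zeta+\beta)/2$, $\eta=(\zeta-\beta)/2$, and since $u=h/|h|$ it suffices to prove that $\theta(s)$ has a limit as $s\uparrow s(t_1)$, equal to (a representative of) the argument of an eigenvector of the normal form. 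Note that $s(t_1)<\infty$ is not guaranteed and is irrelevant; I work intrinsically with $\theta$ as a function of $s$ up to the (finite or infinite) endpoint.

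For the hyperbolic case $\det A(t_1)<0$, the normal form is $\mathrm{diag}(-\lambda,\lambda)$, so $\alpha\to-\lambda$, $\beta,\zeta\to 0$, hence $\mu\to 0$, $\eta\to 0$; the $\theta$-equation reads $\theta'=-\lambda\sin2\theta+o(1)$. The equilibria of $-\lambda\sin2\theta$ are $\theta\in\{0,\pi/2\}\pmod\pi$, i.e.\ the two eigendirections; $\theta=0\pmod{\pi/2}$ grouped as $\{0\}$ is repelling and $\{\pi/2\}$ is attracting for the $s$-flow. I would argue that $\theta(s)$ cannot oscillate: once $\theta$ is in a small neighborhood of the attracting set, $\theta'$ has a definite sign pushing it monotonically toward $\pi/2\pmod\pi$ (standard one-dimensional-flow argument with the $o(1)$ perturbation absorbed); and $\theta$ must eventually enter such a neighborhood, because on the complement $|\theta'|$ is bounded below and $\theta$ can cross the repelling set only finitely often on any finite $s$-interval while staying there forever is impossible since near the repelling set $\theta'$ pushes away. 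Hence $\theta\to\pi/2\pmod\pi$, i.e.\ $u$ converges to $\pm v_+$ where $v_+$ is the unstable eigenvector — the direction picked out depends on which branch; in any case the limit is an eigenvector of $A(t_1)$. (The fact that $h$ stays on one side — consistent with the stable or unstable manifold picture — is where I would, if needed, be a bit careful, but for the limit statement the monotonicity argument suffices.)

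For the degenerate case $\det A(t_1)=0$, $A(t_1)\neq 0$, the normal form is $\begin{psmallmatrix}0&1\\0&0\end{psmallmatrix}$, so $\alpha\to 0$, $\beta\to 1$, $\zeta\to 0$, giving $\mu\to 1/2$, $\eta\to -1/2$ and $\theta'=\tfrac12\cos2\theta-\tfrac12+o(1)=-\sin^2\theta+o(1)\le o(1)$. Thus $\theta'\le 0$ up to a small error, so $\theta$ is "almost monotone non-increasing"; the only way $\theta$ can fail to converge is to keep decreasing past the zeros of $-\sin^2\theta$, but those zeros are at $\theta\in\{0\}\pmod\pi$ — the single eigendirection — and near them $\theta'=-\theta^2+o(1)$ which is too weak to be uniformly negative, so I need the finer fact that the perturbation $o(1)$ is actually $O(|t-t_1|)$ (from $\alpha,\mu+\tfrac12\cdot(-1)\ldots$ being Lipschitz-in-$t$ of order $|t-t_1|$) combined with $\rho\to 0$, to get enough decay; concretely I would look at $w=\alpha\sin2\theta+\mu\cos2\theta+\eta$ as in Proposition~\ref{prop:nonpos} and mimic that $(\rho^2w)$-type estimate, or directly show $\theta$ is trapped in a shrinking neighborhood of $0\pmod\pi$ because otherwise $\theta'$ stays bounded away from $0$ and $\theta\to-\infty$, impossible on a bounded... — here I must handle the possibly-infinite $s$-interval, so the argument should be run in the original time $t$, where the interval $(t_0,t_1)$ is finite. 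This degenerate case is the main obstacle: unlike the hyperbolic case there is no hyperbolic attractor, the relevant equilibrium is only semi-stable, and one has to exploit the quantitative rate at which $A(t)\to A(t_1)$ together with $\rho(t)\to 0$, in the spirit of the monotonicity trick already used for Proposition~\ref{prop:nonpos}, to force $\theta(t)$ to converge to the eigendirection rather than slide through it. Once $\theta(t)$ converges in both cases, $u(t)=h(t)/|h(t)| = P(\cos\theta,\sin\theta)^{\!\top}/|P(\cos\theta,\sin\theta)^{\!\top}|$ converges, and its limit, being the image under $P$ of an eigenvector of $P^{-1}A(t_1)P$, is an eigenvector of $A(t_1)$, which completes the proof.
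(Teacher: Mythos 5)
Your proposal breaks the argument into the two cases $\det A(t_1)<0$ and $\det A(t_1)=0$, which is the right decomposition, but both cases as you run them have genuine gaps.

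\textbf{Hyperbolic case.} Your argument has a sign slip ($\alpha\to\lambda$, not $-\lambda$, with the paper's convention $P^{-1}A(t_1)P=\mathrm{diag}(-a,a)$), but the real issue is conceptual. With the correct signs, $\theta'\approx a\sin 2\theta$ and the attractor of the $\theta$-flow by itself is $\theta=\pi/2\pmod\pi$, i.e.\ the \emph{unstable} eigendirection. But the trajectory at hand has $\rho\to 0$ (because $h(t_1)=0$), and $\rho'/\rho\approx -a\cos 2\theta$ is \emph{positive} near $\theta=\pi/2$, so a trajectory attracted there has $\rho$ growing, not vanishing. In other words, the one-dimensional flow heuristic gives the answer for a \emph{generic} trajectory of $\dot h=Ah/|h|$; the actual trajectory is the exceptional one pinned to the stable direction $\theta=0\pmod\pi$, and one cannot see this without explicitly using the constraint $\rho\to 0$. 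The paper does exactly this: Lemma~\ref{lem:s} shows $\mathfrak h\in L^2((0,\infty))$, and the integral identities leading to \eqref{eq:R} exploit $R(s)=\int_s^\infty|\mathfrak h|^2\to 0$ to pin down $\theta\to 0\pmod\pi$ together with the correct sign of $x_1^2-x_2^2$. Your argument, as written, proves convergence to the wrong eigendirection (and even that only heuristically).

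\textbf{Degenerate case.} You correctly identify this as the main obstacle and correctly observe that the equilibrium $\theta=0\pmod\pi$ of $\theta'=-\sin^2\theta$ is only semi-stable, so $\theta$ can slide past it. But the fixes you sketch cannot close the gap, because there is no purely ODE-level argument that rules out the sliding: the rate at which $|h|\to 0$ (and hence the decay rate of the perturbation $g$ in the $s$-variable) is not controlled a priori, the $(\rho^2 w)$-trick from Proposition~\ref{prop:nonpos} relied on $w$ being bounded away from zero, which fails here ($w=g\to 0$), and working in the original finite $t$-interval doesn't help because $\dot\theta=\theta'/|h|$ can blow up as $t\uparrow t_1$, so $\theta\to-\infty$ is not excluded by compactness of $[t_0,t_1]$. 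The paper's Lemma~\ref{lem:turning} makes precise that the ODE dichotomy is genuinely two-sided: either $\theta\to 0\pmod\pi$ or $\theta$ keeps winding down to $-\infty$. Ruling out the second alternative requires invoking the length-minimality of $\gamma$: the quantitative estimates of Lemma~\ref{lem:estimate0} on each half-turn, the nilpotentization Lemma~\ref{lem:nilpot} to transport minimality to a weak-$\star$ limit control $u_\star$ on the blown-up intervals, Lemma~\ref{lem:constantsign} to show $u_\star$ is a single constant $v_\star$, and finally a contradiction between $\int_0^1 \bar v^T u_\ell\to \bar v^T v_\star\ne 0$ and the cancellation estimates \eqref{esti2}, \eqref{esti3}. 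This reliance on optimality (ultimately Theorem~\ref{t:HLK}) is the key missing idea in your proposal: the proposition is not a theorem about the ODE $\dot h=A h/|h|$, it is a theorem about length-minimizers.
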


We split the analysis in two steps. The first one, which is a rather straightforward adaptation of the  proof of Proposition~\ref{t:main2}, corresponds to the case  where $\det A(t_1)<0$.
We will then turn to the case where $\det A(t_1)=0$ (recall that, according to
Proposition~\ref{prop:nonpos}, $\det A(t_1)$ cannot be positive).

For this purpose, we start by proving a preliminary result.
\subsection{A time-rescaling lemma}
The result below highlights the fact that equation \eqref{eq:Ah} is ``almost invariant'' with respect to similarity of $A$.

\begin{lemma}\label{lem:s}
	For $P\in {\rm GL}(2,\R)$ and $t_*\in (t_0,t_1)$, we
consider the time reparameteri\-za\-tion given by
	\begin{equation*}
		\varphi:  [t_*,t_1)\ni t\mapsto s := \int_{t_*}^t \frac{d\tau}{|h(\tau)|}.
	\end{equation*}
Let $\mathfrak h=P^{-1}h\circ \varphi^{-1}$ and $\mathfrak A=P^{-1 }(A\circ \varphi^{-1})P$.
	Then,
	\begin{enumerate}[label=\rm(\roman*)]
		\item
		$\varphi(t)\rightarrow +\infty$ as $t\rightarrow t_1$;
		\item
		for any $p\in [1,+\infty]$ we have $\mathfrak h\in L^p((0,+\infty), \R^2)$;
		\item
		\label{point:A'h}
		for every $s\in (0,+\infty)$
		we have
		\begin{equation}\label{eq:A'h}
			{\mathfrak h}'(s) = \mathfrak A(s)\mathfrak h(s).
		\end{equation}
	\end{enumerate}
\end{lemma}

\begin{proof}
	We start by proving point \ref{point:A'h}. Observe that $\dot \varphi = 1/|h|$. Then, simple computations yield
	\begin{equation*}
		{\mathfrak h}' = \frac{P^{-1} \dot h\circ \varphi^{-1}}{\dot \varphi \circ \varphi^{-1}} = \mathfrak A\,\mathfrak h.
	\end{equation*}
	
	Assume now that $\lim_{t\rightarrow t_1} \varphi(t) = s_*<+\infty$. Then, since $ \mathfrak h(s_*) = h(t_1) = 0$, we have that $\mathfrak h$ is the solution to the (backward) Cauchy problem
	\begin{equation*}
		\begin{cases}
			{\mathfrak h}' = \mathfrak A\mathfrak h  \quad \text{on }(0,s_*),\\
			\mathfrak h(s_*) = 0.
		\end{cases}
	\end{equation*}
	This implies that $\mathfrak h\equiv 0$ on $(0,s_*)$ and thus $h\equiv 0$ on  $(t_*,t_1)$, which contradicts the definition of the interval $(t_0,t_1)$.
	
	To complete the proof of the statement, observe that $t\mapsto h(t)$ is bounded on $[t_*,t_1]$ and thus belongs to $L^\infty((t_*, t_1),\R^2)$.
	Then, for every  $p\ge 1$,
	\begin{align*}
		\int_0^{+\infty}|\mathfrak h(s)|^p\,ds &= \int_{t_*}^{t_1}|P^{-1}h|^p|h|^{-1}\,dt \leq \|P^{-1}\|^{p}  \int_{t_*}^{t_1}|h|^{p-1}\,dt \\
		& \leq \|P^{-1}\|^{p} \|h\|_{L^{\infty}}^{p-1} (t_{1}-t_{*})<+\infty.
		\qedhere
	\end{align*}
\end{proof}

\subsection{Proof of Proposition~\ref{prop:h/h} in the case $\det A(t_1)<0$}

	Since $\tr(A)=0$ and $\det A(t_1)<0$, there exists $P\in {\rm GL}(2,\R)$	such that
	\begin{equation*}
		P A(t_1) P^{-1} =
		\left(
		\begin{array}{cc}
			-a & 0 \\
			0 & a
		\end{array}
		\right), \qquad a>0.
	\end{equation*}
	Up to applying the change of coordinates associated with $P$ and defining the  time-rescaled curves $\mathfrak h$ and $\mathfrak  A$ as in Lemma~\ref{lem:s}, we have
	\begin{equation}
		\mathfrak A(s) = \left(
		\begin{array}{cc}
			-\alpha(s) & \beta(s) \\
			\zeta(s) & \alpha(s)
		\end{array}
		\right),
	\end{equation}
	where
	\begin{equation}\label{eq:limst}
	\lim_{s\to\infty}\alpha(s)=a, \quad \lim_{s\to\infty}\zeta(s)=\lim_{s\to\infty}\beta(s)=0.
	\end{equation}
	Let $\mathfrak  h=\rho e^{i\theta}$ for $\rho>0$ and $\theta\in[0,2\pi)$.
We will prove that $\theta (s) \rightarrow 0 \mod \pi$ as $s\to\infty$.

	Observe that, letting $\mathfrak h=(x_1,x_2)$ with $x_1,x_2\in\R$, we have
	\begin{equation}\label{eq:tan}
		\frac12\tan 2\theta = \frac{\sin\theta\cos\theta}{\cos^2\theta-\sin^2\theta} = \frac{x_1x_2}{x_1^2-x_2^2}.
	\end{equation}
	By \eqref{eq:A'h}
	and simple computations we obtain
	\begin{gather}
		(x_1x_2)' = \zeta x_1^2 + \beta x_2^2, \qquad
		\frac{(x_1^2-x_2^2)' }{2} = -\alpha (x_1^2+x_2^2) + (\beta-\zeta)x_1x_2.
	\end{gather}
	Upon integration and exploiting \eqref{eq:limst},
	we get
	\begin{equation} \label{eq:R}
		x_1x_2 = o(R), \quad x_1^2-x_2^2 = 2a R(1+o(1)), \quad \text{where}\quad R(s) := \int_s^{+\infty} |\mathfrak h(\sigma)|^2\,d\sigma.
	\end{equation}
	Observe that, by Lemma~\ref{lem:s}, $\mathfrak h\in L^2((0,+\infty),\R^2)$ and, in particular, $R\rightarrow0$ as $s\rightarrow+\infty$.
	Finally, substituting the above in \eqref{eq:tan} shows that $\tan2\theta\rightarrow 0$.
 From the second equation in \eqref{eq:R}, the sign of $x_1^2-x_2^2$ is positive as $t \uparrow t_1$, which implies that $\theta \to 0 \mod \pi$. This completes the proof
of Proposition~\ref{prop:h/h} in the case $\det A(t_1)<0$.

\begin{rmk} \label{r:eigenvectors}
Recall that in the analysis above we  suppose that $u=\frac{h}{|h|}$, and we actually prove that in this case $u(t)$ converges to a unit eigenvector of $A(t_1)$ associated with the negative eigenvalue $-a$.
In the case where $u=-\frac{h}{|h|}$, 
an analogous argument yields
that  $u(t)$ converges to a unit eigenvector of $A(t_1)$ associated with the positive eigenvalue $a$.
\end{rmk}


\subsection{Proof of Proposition~\ref{prop:h/h} in the case $\det A(t_1)=0$}\label{s:det0}

Assume that  $\det A(t_1)=0$ and recall that $\tr A(t_1)=0$. Since, moreover, $A(t_1)\ne0$, there exists $P\in {\rm GL}(2,\R)$ such that
\begin{equation}\label{eq:Pnilp}
	P A(t_1) P^{-1} =
	\left(
	\begin{array}{cc}
		0 & 1 \\
		0 & 0
	\end{array}
	\right).
\end{equation}
As before, using the change of variables of Lemma~\ref{lem:s},
we let
	\begin{equation*}
		\mathfrak  A(s) = \left(
		\begin{array}{cc}
			-\alpha(s) & \beta(s) \\
			\zeta(s) & \alpha(s)
		\end{array}
		\right),
	\end{equation*}
	where $\alpha,\beta,\zeta$ are linear combinations of $h_{2112}\circ \varphi^{-1}, h_{2212}\circ \varphi^{-1}$, and $h_{1112}\circ \varphi^{-1}$, and hence absolutely continuous with bounded derivatives on $(0,+\infty)$, according to \eqref{eq:poisson}. Equality~\eqref{eq:Pnilp} implies that $\alpha\rightarrow 0$, $\beta\rightarrow 1$, and $\zeta\rightarrow 0$ as $s\to+\infty$. 	
	
	We also introduce $\mu := \zeta+\beta$ and we notice that $\mu \rightarrow 1$ as $s\to+\infty$.
	(Beware that the same letters are used for different parameters in the proof of Proposition~\ref{prop:nonpos}.)
	Then, \eqref{eq:A'h} reads
	\begin{equation*}
		\frac{\rho'}{\rho}=\mu\sin\theta\cos\theta-\alpha\cos2\theta, \qquad \theta'=-\mu\sin^2\theta+\alpha\sin2\theta+\zeta,
	\end{equation*}
and can be written as
	\begin{equation}\label{eq:polar2}
		\frac{\rho'}{\rho}=\sin\theta\cos\theta+f, \qquad \theta'=-\sin^2\theta+g,	
	\end{equation}
where the functions
	\begin{equation*}
		f = -\alpha\cos2\theta + (\mu-1)\sin\theta\cos\theta, \qquad g=\alpha\sin2\theta+\zeta +(1-\mu)\sin^2\theta,
	\end{equation*}
 tend to zero as $s\to+\infty$.
	
Establishing Proposition~\ref{prop:h/h}, finally amounts to proving that $\theta\rightarrow 0 \mod \pi$, as $s\rightarrow+\infty$.
\begin{lemma}\label{lem:turning}
  We have the following dichotomy:
  \begin{enumerate}[label=\rm(\roman*)]
    \item \label{p-i}
    $\theta\rightarrow 0 \mod \pi$, as $s\rightarrow+\infty$;
        \item\label{p-ii}
    $\theta\rightarrow-\infty$ as $s\rightarrow+\infty$. Moreover, in this case,
    for any $0<\varepsilon<\pi/2$
    there exists an increasing sequence of positive real numbers $(s_n)_{n\in\mathbb N}$ 
    tending to infinity  such that
    \begin{gather*}
      \theta(s_{2n}) = \pi-\varepsilon  \mod 2\pi, \qquad   \theta(s_{2n+1}) = \varepsilon \mod 2\pi,\\
      \theta'(s)<0 \quad \forall s\in [s_{2n},s_{2n+1}]. 
    \end{gather*}
  \end{enumerate}
\end{lemma}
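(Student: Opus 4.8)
The plan is to analyze the planar non-autonomous ODE system \eqref{eq:polar2}, whose $\theta$-equation is a perturbation of the autonomous equation $\theta'=-\sin^2\theta$, which has $\theta\equiv 0 \mod\pi$ as a (non-hyperbolic, semistable) equilibrium. The perturbation $g$ tends to $0$ as $s\to+\infty$. First I would observe that, since $g\to 0$, for any small $\varepsilon>0$ there is $S_\varepsilon$ such that $\theta'<0$ whenever $\theta\in(\varepsilon,\pi-\varepsilon) \mod \pi$ and $s>S_\varepsilon$, because on that range $-\sin^2\theta\le -\sin^2\varepsilon$ is bounded away from zero and dominates $g$. This means $\theta$ is strictly decreasing through each such ``fast band'' and cannot remain there forever: it must exit through the lower endpoint $\varepsilon \mod\pi$ in finite $s$-time. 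Hence the only way $\theta$ can fail to converge to $0\mod\pi$ is to repeatedly cross the ``slow bands'' near $0 \mod \pi$ in the downward direction, i.e. $\theta\to-\infty$.

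Concretely, I would argue as follows. Fix $0<\varepsilon<\pi/2$ and set $S=S_\varepsilon$ as above. Consider $\theta(S)$. If $\theta(s)$ stays in $(-\varepsilon+k\pi,\varepsilon+k\pi)$ for all large $s$ for some integer $k$, I claim $k=0$ and $\theta\to 0\mod\pi$: indeed, if $\theta$ eventually stays in a single slow band but does not converge, it must oscillate, but any upward excursion requires $\theta'>0$, which near $0\mod\pi$ forces $|g|>\sin^2\theta$, compatible only with $\theta$ very close to $0\mod\pi$; a careful monotonicity/liminf argument (using that $g\to0$, so for $\theta$ bounded away from $0\mod\pi$ on that band $\theta'<0$) shows $\theta$ must actually tend to the endpoint, giving case \ref{p-i}. (The value of $k$ is pinned down to $0$ because the convergence of $u=\mathfrak h/|\mathfrak h|$ to a fixed direction only sees $\theta\mod\pi$ anyway; alternatively one simply normalizes.) Otherwise $\theta$ leaves every slow band, and since it can only leave a slow band at its \emph{lower} endpoint $-\varepsilon\mod\pi$ (entering the fast band below) and then must traverse the fast band monotonically decreasing to its lower endpoint $\varepsilon-\pi\mod\pi$, we get a strictly decreasing sequence of "passage" events, hence $\theta\to-\infty$. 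Recording the times $s_{2n}$ when $\theta$ enters a fast band at $\pi-\varepsilon\mod 2\pi$ and $s_{2n+1}$ when it exits at $\varepsilon\mod 2\pi$, and noting $\theta'<0$ throughout $[s_{2n},s_{2n+1}]$ by the choice of $S_\varepsilon$, yields exactly case \ref{p-ii} with the stated properties; the sequence tends to infinity because $\theta$ does.

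The main technical point — and the only place requiring care rather than bookkeeping — is ruling out, within the slow band near $0\mod\pi$, an infinite oscillation that neither converges nor escapes downward. The obstruction is that $\theta'=-\sin^2\theta+g$ can change sign inside the slow band: $\theta'$ can be positive only where $\sin^2\theta< g(s)$, i.e. in a shrinking neighborhood of $0\mod\pi$ since $g(s)\to0$. I would handle this by a trapping argument: once $\theta$ is in the slow band at a large time, either it subsequently exits through the lower endpoint (case \ref{p-ii} branch), or it is eventually confined to an arbitrarily small neighborhood $|\sin\theta|<\delta$ of $0\mod\pi$ — because whenever $|\sin\theta|\ge\delta$ and $s$ is large enough that $|g(s)|<\delta^2$, we have $\theta'<0$, pushing $\theta$ back toward the endpoint and ruling out escape through the upper endpoint. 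Letting $\delta\to0$ gives $\theta\to0\mod\pi$, which is case \ref{p-i}. Thus the dichotomy is exhaustive, and in the escaping case the quantitative statement about $(s_n)$ follows immediately from the construction.
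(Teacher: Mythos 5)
Your proposal is correct and takes essentially the same approach as the paper: both decompose the circle into "fast bands" (where $|\sin\theta|$ is bounded below, so $\theta'<0$ with a definite rate once $s$ is large) and "slow bands" (near $0\bmod\pi$), observe that the level sets $|\sin\theta|=\sin\varepsilon$ are crossed only downward for $s$ large, and conclude that $\theta$ either eventually stabilizes near $0\bmod\pi$ or winds down to $-\infty$, recording the passage times through the fast bands. Your write-up is in fact a bit more explicit than the paper's on the one delicate point — the trapping argument inside a slow band, showing that $\theta$ cannot oscillate with $\limsup|\sin\theta|>0$ while remaining confined near $0\bmod\pi$ — which the paper passes over rather quickly when it asserts that $q_1$ and $q_2$ can be taken with $\theta(q_2)=\theta(q_1)-\pi$; your one-sided semi-permeability observation (at $\theta=\varepsilon+k\pi$ one has $\theta'<0$ for $s$ large, so this level cannot be crossed upward) is exactly what justifies that step. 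The remark about pinning $k=0$ is a harmless distraction: case (i) is $\sin\theta\to 0$, and the specific multiple of $\pi$ is irrelevant.
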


\begin{figure}[h]
\includegraphics[width=10cm]{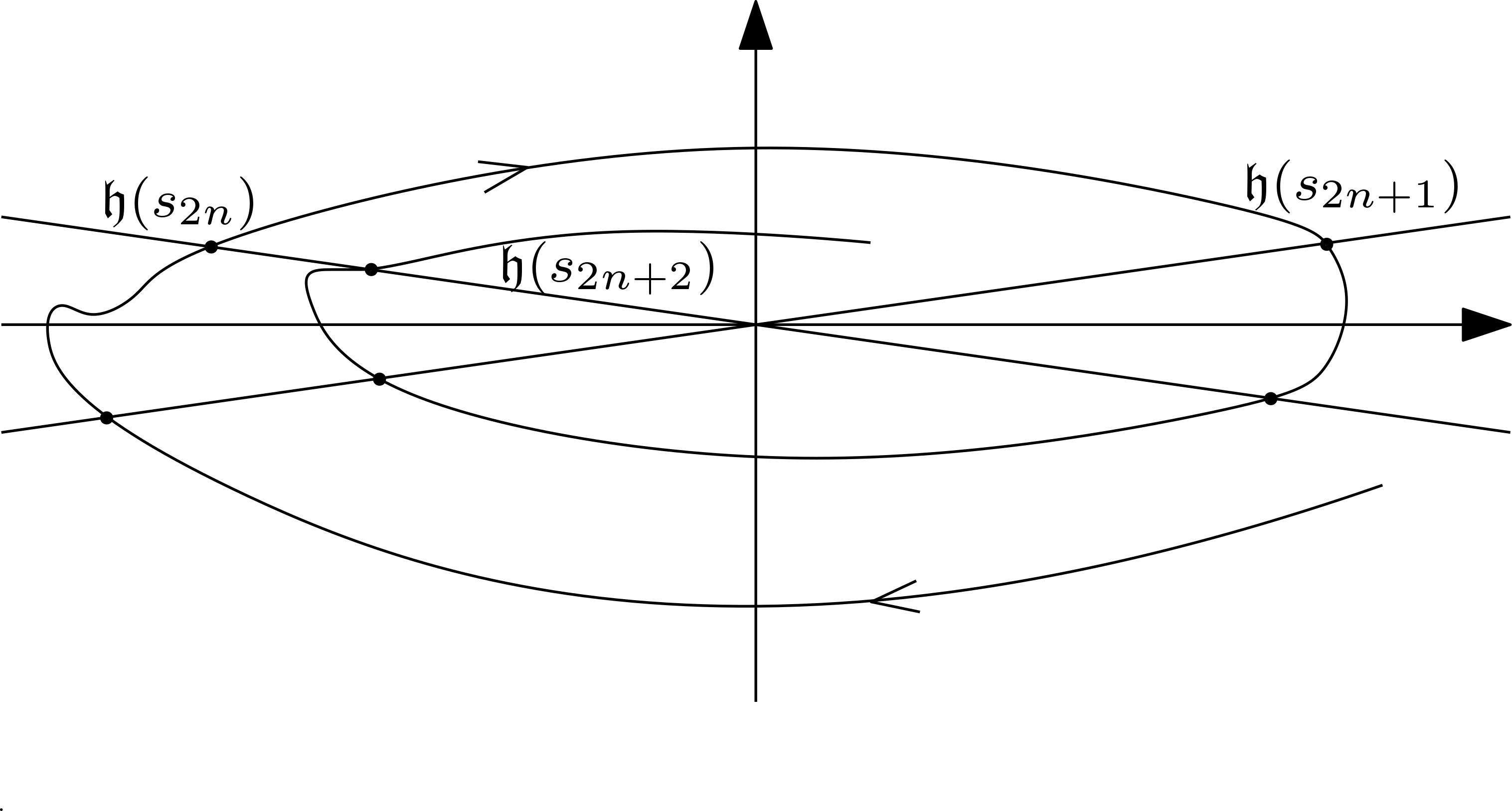}
\caption{The sequence $(s_n)_{n\in\mathbb N}$  in Lemma~\ref{lem:turning}}
\label{f:turn}
\end{figure}

\begin{proof}
   Notice that the dynamics of $\theta$ is a perturbation via $g$ of
  \begin{equation*}
    \theta' = -\sin^2\theta.
  \end{equation*}
The phase portrait of the latter on $\mathbb S^1$ is made of two equilibria in $0$ and $\pi$ joined by two clock-wise oriented heteroclinic trajectories.

  Assume that \ref{p-i} does not hold. Therefore, there exists  $c>0$ such that
  \begin{equation}\label{eq:c}
  \limsup_{s\to+\infty} |\sin\theta(s)|>c.
  \end{equation} Let $\eps>0$ be such that $\sin\eps\in (0,c)$ and  $s^*>0$ be such that, for $s>s^*$, $\vartheta'(s)<-\eps^2/2$ as soon as $|\sin\theta(s)|> \eps$.

 Pick $q_1>s^*$  such that $|\sin\theta(q_1)|>c> \sin\eps$.
 Since $\vartheta'$ is bounded from zero as long as  $|\sin\theta|$ stays larger than $\sin\eps$, there exists $r_1>q_1$ such that $|\sin\theta(r_1)|= \sin\eps$. By definition of $c$, there exists $q_2>r_1$ such that $|\sin\theta(q_2)|> c$. Moreover, $q_1$ and $q_2$ can be taken so that $\theta(q_2)=\theta(q_1)-\pi$  and \eqref{eq:c} holds with $c$ arbitrarily close to $1$. By iterating the procedure leading from $q_1$ to $q_2$, we prove that $\theta\rightarrow-\infty$. The construction also shows how to define  the sequence $(s_n)_{n\in\mathbb{N}}$ as in
 \ref{p-ii} (cf.\ Figure~\ref{f:turn}).
\end{proof}

The rest of the argument consists in showing that case \ref{p-ii} in Lemma~\ref{lem:turning} cannot  hold true. For that purpose, we argue by contradiction.

\begin{lemma}
\label{lem:estimate0}
Assume that property \ref{p-ii} in Lemma~\ref{lem:turning} holds true.
Then there exists $0<\varepsilon_0<\pi/2$ such that for any $0<\varepsilon<\varepsilon_0$ there exists $N_\varepsilon$ for which, given any  $n\geq N_\varepsilon$,
 \begin{equation}\label{eq:basic1}
\frac2{\varepsilon}\left(1-{\varepsilon^2} \right)
\le s_{2n+1}-s_{2n}\le \frac2\varepsilon\left(1+{\varepsilon^2} \right),
\end{equation}
and
\begin{equation}\label{eq:basic2}
(1-\varepsilon)\varepsilon\rho(s_{2n})\leq \rho(s)\sin\theta(s)\leq (1+\varepsilon)\varepsilon\rho(s_{2n}),\hbox{ for }s\in[s_{2n},s_{2n+1}].
\end{equation}
As a consequence, for every $n\ge N_\eps$, one has the following estimates
\begin{align}
 2(1-2\varepsilon)\rho(s_{2n})\leq
\int_{s_{2n}}^{s_{2n+1}} \sin\theta(s)\rho(s)\,ds&\leq 2(1+2\varepsilon)\rho(s_{2n}),\label{esti1}\\
 (1-2\varepsilon)\frac{\rho(s_{2n})}{\varepsilon}\leq \int_{s_{2n}}^{s_{2n+1}}\rho(s)\,ds&\leq(1+2\varepsilon)\frac{\rho(s_{2n})}{\varepsilon},\label{esti2}\\
 \Big\vert\int_{s_{2n}}^{s_{2n+1}} \cos\theta(s)\rho(s)\,ds\Big\vert&\leq
 \rho(s_{2n}).\label{esti3}
 \end{align}
\end{lemma}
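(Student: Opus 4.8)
The plan is to treat the window $[s_{2n},s_{2n+1}]$ as an interval on which, by construction in Lemma~\ref{lem:turning}\ref{p-ii}, the angle $\theta$ decreases monotonically from $\pi-\varepsilon$ to $\varepsilon$ (mod $2\pi$), and the perturbations $f,g$ in \eqref{eq:polar2} are as small as we like once $n$ is large. First I would use the $\theta$-equation $\theta'=-\sin^2\theta+g$: since $g\to0$, for $n$ large enough we have $|g(s)|\le\varepsilon^3$ (say) on $[s_{2n},s_{2n+1}]$, so $\theta'$ is trapped between $-\sin^2\theta-\varepsilon^3$ and $-\sin^2\theta+\varepsilon^3$. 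Changing variables from $s$ to $\theta$ over the arc from $\pi-\varepsilon$ down to $\varepsilon$, the length $s_{2n+1}-s_{2n}$ equals $\int_{\varepsilon}^{\pi-\varepsilon}\frac{d\theta}{\sin^2\theta+O(\varepsilon^3)}$, and $\int_\varepsilon^{\pi-\varepsilon}\frac{d\theta}{\sin^2\theta}=2\cot\varepsilon=\frac2\varepsilon(1+O(\varepsilon^2))$; carefully bounding the effect of the $O(\varepsilon^3)$ term (it contributes something of order $\varepsilon^3\cdot\frac1{\varepsilon^2}=\varepsilon$ relative to the main term $\frac2\varepsilon$, i.e.\ a relative error $O(\varepsilon^2)$) gives \eqref{eq:basic1}. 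The same computation shows that $\theta(s)$ stays in a definite sub-arc, which will be used repeatedly below.

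Next I would establish \eqref{eq:basic2}, which controls the variation of $\rho\sin\theta$ across the window. The natural object is $(\rho\sin\theta)'=\rho'\sin\theta+\rho\cos\theta\,\theta'$; substituting \eqref{eq:polar2} gives $(\rho\sin\theta)'=\rho\sin\theta(\sin\theta\cos\theta+f)+\rho\cos\theta(-\sin^2\theta+g)=\rho(f\sin\theta+g\cos\theta)$, so $\left|\frac{(\rho\sin\theta)'}{\rho\sin\theta}\right|\le \frac{|f|+|g|/|\sin\theta|}{1}$. On the window $|\sin\theta|\ge\sin\varepsilon$, so this is $\le |f|+|g|/\sin\varepsilon$, which is at most (say) $\varepsilon^2/4$ for $n$ large; integrating over an interval of length $\le\frac2\varepsilon(1+\varepsilon^2)$ gives a total multiplicative variation bounded by $\exp(\frac{\varepsilon^2}{4}\cdot\frac2\varepsilon(1+\varepsilon^2))=\exp(O(\varepsilon))$, which one sandwiches between $1\pm\varepsilon$ after shrinking $\varepsilon_0$. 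Since at $s=s_{2n}$ we have $\sin\theta(s_{2n})=\sin\varepsilon$, so $\rho(s_{2n})\sin\theta(s_{2n})=\rho(s_{2n})\sin\varepsilon=\varepsilon\rho(s_{2n})(1+O(\varepsilon^2))$, this yields \eqref{eq:basic2} (possibly after absorbing the $\sin\varepsilon$ vs $\varepsilon$ discrepancy into the constants, which is the kind of routine bookkeeping I would not spell out in full).

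Finally, the three integral estimates follow by combining \eqref{eq:basic1} and \eqref{eq:basic2}. For \eqref{esti2}: from \eqref{eq:basic2}, $\rho(s)$ is pinched between $\frac{(1-\varepsilon)\varepsilon\rho(s_{2n})}{\sin\theta(s)}$ and $\frac{(1+\varepsilon)\varepsilon\rho(s_{2n})}{\sin\theta(s)}$, and $\int_{s_{2n}}^{s_{2n+1}}\frac{ds}{\sin\theta(s)}$ is computed the same way as the length estimate but with one fewer power of $\sin\theta$: $\int_\varepsilon^{\pi-\varepsilon}\frac{d\theta}{\sin^2\theta\cdot\sin\theta}$—wait, more carefully, $ds=\frac{d\theta}{-\theta'}=\frac{d\theta}{\sin^2\theta+O(\varepsilon^3)}$ so $\int\frac{ds}{\sin\theta}=\int_\varepsilon^{\pi-\varepsilon}\frac{d\theta}{\sin^3\theta}(1+O(\varepsilon))$, which is $\frac1{\varepsilon^2}(1+O(\varepsilon^2))$ since $\int\frac{d\theta}{\sin^3\theta}$ has leading singularity $\frac1{2\varepsilon^2}$ at each end, giving $\frac1{\varepsilon^2}$ total; multiplying by $\varepsilon\rho(s_{2n})$ gives $\frac{\rho(s_{2n})}{\varepsilon}(1+O(\varepsilon))$, which is \eqref{esti2} after adjusting constants. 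For \eqref{esti1}: $\int_{s_{2n}}^{s_{2n+1}}\sin\theta(s)\rho(s)\,ds$ is, by \eqref{eq:basic2}, pinched between $(1\pm\varepsilon)\varepsilon\rho(s_{2n})\cdot(s_{2n+1}-s_{2n})$, and by \eqref{eq:basic1} this equals $(1\pm\varepsilon)\varepsilon\rho(s_{2n})\cdot\frac2\varepsilon(1+O(\varepsilon^2))=2\rho(s_{2n})(1+O(\varepsilon))$, giving \eqref{esti1}. For \eqref{esti3}: here we cannot use the crude bound since $\cos\theta$ changes sign in the middle of the arc; instead I would write $\cos\theta\,\rho\,ds=\rho\cos\theta\frac{d\theta}{-\theta'}=\frac{\rho\cos\theta}{\sin^2\theta-g}\,d\theta$, and note $\frac{\cos\theta}{\sin^2\theta}=-\left(\frac1{\sin\theta}\right)'$, so $\int_\varepsilon^{\pi-\varepsilon}\frac{\cos\theta}{\sin^2\theta}d\theta=-\left[\frac1{\sin\theta}\right]_\varepsilon^{\pi-\varepsilon}=0$ by the symmetry $\sin(\pi-\varepsilon)=\sin\varepsilon$; the surviving contributions come only from the variation of $\rho$ (controlled by \eqref{eq:basic2}, of relative size $O(\varepsilon)$) and from the $g$-correction, each of which is $O(\varepsilon)\cdot\frac{\rho(s_{2n})}{\varepsilon}=O(\rho(s_{2n}))$, so $|\int\cos\theta\rho\,ds|\le\rho(s_{2n})$ after fixing $\varepsilon_0$.

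The main obstacle is \eqref{esti3}: unlike the other two integrals it involves a genuine cancellation (the naive bound $\|\cos\theta\|_\infty\int\rho\,ds$ only gives $\approx\rho(s_{2n})/\varepsilon$, which is far too weak), so one must exploit the exact primitive $\cos\theta/\sin^2\theta=-(1/\sin\theta)'$ together with the endpoint symmetry $\theta(s_{2n})\equiv\pi-\varepsilon$, $\theta(s_{2n+1})\equiv\varepsilon$. Getting the error terms from the $\rho$-variation and the $g$-perturbation to come out $O(\rho(s_{2n}))$ rather than $O(\rho(s_{2n})/\varepsilon)$ requires being slightly careful that the $O(\varepsilon)$ relative error in $\rho$ multiplies the \emph{already cancelled} integrand rather than the raw $\int\rho$, which is why one organizes the computation as $\int\cos\theta\,\rho\,d s=\rho(s_{2n})\int(\text{main})+\int(\text{error})$ with the main term vanishing exactly.
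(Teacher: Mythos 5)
Your proofs of \eqref{eq:basic1}, \eqref{eq:basic2}, \eqref{esti1}, and \eqref{esti2} follow the same route as the paper (your use of $\theta'=-\sin^2\theta+g$ with the change of variables $s\mapsto\theta$ is equivalent to the paper's $(\cot\theta)'=1-g/\sin^2\theta$, and your identity $(\rho\sin\theta)'=\rho(f\sin\theta+g\cos\theta)$ is exactly the paper's logarithmic-derivative computation). For \eqref{esti3}, however, your route is genuinely different from the paper's: you want to exploit directly the exact cancellation $\int_\varepsilon^{\pi-\varepsilon}\cos\theta/\sin^k\theta\,d\theta=0$ ($k=2$ or $3$) afforded by the endpoint symmetry $\sin(\pi-\varepsilon)=\sin\varepsilon$ and then control the remainder; the paper instead performs an integration by parts on $\int\rho'/\sin\theta\,ds$ (after writing $\rho\sin\theta\cos\theta=\rho'-f\rho$) and substitutes the equation for $\theta'$, which produces the same cancellation but also returns a second copy of $\int\cos\theta\,\rho\,ds$ to the left-hand side, yielding the identity $2\int\cos\theta\,\rho\,ds=\frac{\rho(s_{2n+1})-\rho(s_{2n})}{\sin\varepsilon}-\int\rho\frac{f-(\cos\theta\,g/\sin\theta)}{\sin\theta}\,ds$. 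That factor $2$ is what makes the final constant $1$ in \eqref{esti3} comfortable rather than borderline; with your direct approach you have to squeeze, and the honest estimate is $C\rho(s_{2n})$ for some $C$ close to (and a priori possibly slightly above) $1$ unless you go back to the sharper bound $\left|\ln\frac{\rho\sin\theta}{\rho(s_{2n})\sin\varepsilon}\right|\le\frac{\varepsilon}{2}(1+\varepsilon^2)$ hidden inside the proof of \eqref{eq:basic2}. This is harmless for the way \eqref{esti3} is used later (any fixed constant times $\rho(s_{2n})$ works), but you should be aware that the stated constant $1$ is not automatic in your scheme.

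One genuine imprecision to flag: you write that the error in \eqref{esti3} comes ``from the variation of $\rho$ (controlled by \eqref{eq:basic2}, of relative size $O(\varepsilon)$)''. What \eqref{eq:basic2} controls up to relative size $O(\varepsilon)$ is $\rho\sin\theta$, not $\rho$ itself; across a window $[s_{2n},s_{2n+1}]$ the function $\rho$ typically varies by a factor of order $1/\varepsilon$ (it is $\approx\rho(s_{2n})$ near the endpoints and $\approx\varepsilon\rho(s_{2n})$ near $\theta=\pi/2$). So the decomposition should be organised around the profile $\tilde\rho(s)=\varepsilon\rho(s_{2n})/\sin\theta(s)$ (or $\rho(s_{2n})\sin\varepsilon/\sin\theta(s)$), for which $|\rho-\tilde\rho|\le\varepsilon\tilde\rho$, not around the constant $\rho(s_{2n})$; otherwise the error integral $\int\cos\theta\,|\rho-\rho(s_{2n})|\,ds$ is of size $\rho(s_{2n})/\varepsilon$, not $\rho(s_{2n})$. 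Your subsequent formula $O(\varepsilon)\cdot\frac{\rho(s_{2n})}{\varepsilon}$ gives the right order of magnitude only if read as (relative error in $\rho\sin\theta$) $\times$ $\int\tilde\rho\,ds$.
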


\begin{proof}
Set
$M_f(s)= \sup_{\tau\ge s}|f(\tau)|$ and $M_g(s)= \sup_{\tau\ge s}|g(\tau)|$. Observe that these two functions tend to zero as $s$ tends to infinity.

By Lemma~\ref{lem:turning}, for $n$ large enough and $s\in[s_{2n},s_{2n+1}]$, equation \eqref{eq:polar2} becomes
  \begin{equation}\label{eq:cotan}
    (\cot\theta)' = 1-\frac{g}{\sin^2\theta}.
  \end{equation}
For $n$ large enough, for every $s\in[s_{2n},s_{2n+1}]$ we have
  \begin{equation*}
    \left|\frac{g(s)}{\sin^2\theta(s)}\right|\le \frac{M_g(s_{2n})}{\sin^2\eps} \le \frac{\varepsilon^2}{2}.
  \end{equation*}
Equation  \eqref{eq:basic1} follows by integrating \eqref{eq:cotan} on the interval $[s_{2n},s_{2n+1}]$.

On the interval $[s_{2n},s_{2n+1}]$, one has
$$
\frac{\rho'(s)}{\rho(s)}+\frac{\theta'(s)\cos\theta(s)}{\sin\theta(s)}=f(s)+\frac{\cos\theta(s) g(s)}{\sin\theta(s)}.
$$
For $n$ large enough, for every $s\in[s_{2n},s_{2n+1}]$ we have
\begin{equation}\label{eq:int00}
\left\vert f(s)\right\vert+ \left\vert\frac{\cos\theta(s) g(s)}{\sin\theta(s)}\right\vert
\leq M_f(s_{2n})+
\frac{M_g(s_{2n})}{\sin\varepsilon}\leq \frac{\varepsilon^2}4.
\end{equation}
By integrating between $s_{2n}$ and any $s\in [s_{2n},s_{2n+1}]$, one gets
\begin{align*}
\left\vert \ln\left(\frac{\rho(s)\sin\theta(s)}{ \rho(s_{2n})\sin\varepsilon}\right)\right\vert
&=\left\vert\int_{s_{2n}}^s
\left(f(s)+\frac{\cos\theta(s) g(s)}{\sin\theta(s)}\right) ds\right\vert\leq \frac{(s_{2n+1}-s_{2n})\varepsilon^2}4\leq \frac{\varepsilon}{2}(1+\eps^{2}),
\end{align*}
yielding \eqref{eq:basic2} for $\varepsilon$ small enough.

We now turn to the proof of the three estimates \eqref{esti1}--\eqref{esti3}.
The first one simply follows by integrating  \eqref{eq:basic2} on $[s_{2n},s_{2n+1}]$ and using \eqref{eq:basic1}. Estimate~\eqref{esti2} is obtained by first dividing
 \eqref{eq:basic2} by $\sin\theta(s)$ and then integrating the resulting inequalities on $[s_{2n},s_{2n+1}]$. One gets that
  $$
  (1-\varepsilon)\varepsilon\rho(s_{2n})\int_{s_{2n}}^{s_{2n+1}} \frac{ds}{\sin\theta(s)}\leq
\int_{s_{2n}}^{s_{2n+1}} \rho(s)\,ds
\leq (1+\varepsilon)\varepsilon\rho(s_{2n})\int_{s_{2n}}^{s_{2n+1}} \frac{ds}{\sin\theta(s)}.
$$
 On the other hand, the following holds true,
 $$
 \int_{s_{2n}}^{s_{2n+1}} \frac{ds}{\sin\theta(s)}=\int_{s_{2n}}^{s_{2n+1}} \frac{\theta'(s)}{-\sin^3\theta(s)\big(1-\frac{g(s)}{\sin^2\theta(s)}\big)}\,ds,
 $$
 which implies that
 $$
 (1-\varepsilon^2)\int_\varepsilon^{\pi-\varepsilon}\frac{d\theta}{\sin^3\theta}\leq \int_{s_{2n}}^{s_{2n+1}} \frac{ds}{\sin\theta(s)}\leq (1+\varepsilon^2)\int_\varepsilon^{\pi-\varepsilon}\frac{d\theta}{\sin^3\theta}.
 $$
 A direct computation shows that $\int_\varepsilon^{\pi-\varepsilon}\frac{d\theta}{\sin^3\theta}=\frac1{\varepsilon^2(1+o(\varepsilon))}$ as $\varepsilon$ tends to zero. One finally deduces estimate~\eqref{esti2}.

 To derive estimate~\eqref{esti3},
 one notices that
 \begin{align*}
 \int_{s_{2n}}^{s_{2n+1}} \cos\theta(s)\rho(s)\,ds&=\int_{s_{2n}}^{s_{2n+1}} \frac{\sin\theta(s)\cos\theta(s)\rho(s)}{\sin\theta(s)}\,ds\\
 &=
 \int_{s_{2n}}^{s_{2n+1}} \frac{\rho'(s)
 -f(s)\rho(s)}{\sin\theta(s)}\,ds\\
 &= -\int_{s_{2n}}^{s_{2n+1}} \frac{f(s)\rho(s)}{\sin\theta(s)}\,ds+
 \frac{\rho(s_{2n+1})-\rho(s_{2n})}{\sin\varepsilon}\\&\qquad +\int_{s_{2n}}^{s_{2n+1}} \frac{\rho(s)\cos\theta(s)\theta'(s)}{\sin^2\theta(s)}\,ds.
 \end{align*}
 By using the expression of $\theta'$ in the last integral, one deduces that
 $$
 2\int_{s_{2n}}^{s_{2n+1}} \cos\theta(s)\rho(s)\,ds= \frac{\rho(s_{2n+1})-\rho(s_{2n})}{\sin\varepsilon}-\int_{s_{2n}}^{s_{2n+1}}\rho(s)\frac{f(s)-\frac{\cos\theta(s) g(s)}{\sin\theta(s)}}{\sin\theta(s)}\,ds.
 $$
 By using \eqref{eq:basic2} for $s=s_{2n}$ and $s=s_{2n+1}$ and then \eqref{eq:int00}, one deduces \eqref{esti3}.
 \end{proof}

Fix a sequence $(\varepsilon_k)_{k\in\mathbb N}$,
strictly decreasing to $0$.
For each $k\in\mathbb N$, we use $(s_{k,n})_{n\in\mathbb N}$
to denote the sequence $(s_n)_{n\in\mathbb N}$
given by Lemma~\ref{lem:turning} and corresponding to $\varepsilon=\varepsilon_k$. For all $k\in\mathbb N$ let $n_k\ge N_{\varepsilon_k}$ be an integer to be fixed later, where $N_{\varepsilon_k}$ is as in Lemma~\ref{lem:estimate0}. We use $(\xi_\ell)_{\ell\in\mathbb N}$
to denote the sequence defined by
\begin{equation*}
  \xi_{2k} = s_{k,2n_k},\quad \xi_{2k+1} = s_{k,2n_k+1},
  \qquad \forall k\in\mathbb N.
\end{equation*}
We choose $k\mapsto n_k$ so that the sequence $(\xi_\ell)_{\ell\in\mathbb N}$ is strictly increasing and tends to  infinity
as $\ell\to+\infty$.

Let $t_\ell= \varphi^{-1}(\xi_\ell)$, where $\varphi$ is the change of variables introduced in Lemma~\ref{lem:s}. For every $\ell\ge 0$ consider the function $u_\ell\in L^\infty([0,1],\mathbb S^1)$ defined by $u_\ell(\tau)=u(t_{2\ell}+\tau(t_{2\ell+1}-t_{2\ell}))$.
By the weak-$\star$ compactness of all bounded subsets of $L^\infty([0,1],\mathbb R^2)$,
we can assume without loss of generality that $u_\ell\rightharpoonup u_\star$ in the weak-$\star$ topology.
Applying Lemma~\ref{lem:nilpot}
with $a_\ell = t_{2\ell}$ and $b_\ell=t_{2\ell+1}$, we deduce that $u_\star$ is minimizing and $|u_\star|\equiv 1$ almost everywhere in $[0,1]$.

For every subinterval $[a,b]$ of $[0,1]$,
by the properties of weak-$\star$ convergence, we have that
\begin{equation*}
  \int_a^b v^{T}
  u_\ell(\tau)\,d\tau \rightarrow   \int_a^b  v^{T} u_\star(\tau) \,d\tau ,\qquad \forall\, v\in \R^{2}.
\end{equation*}
Moreover, one has
\begin{align}\nonumber
  \int_a^b v^{T}
  u_\ell(\tau)\,d\tau
    &=\frac{1}{t_{2\ell+1}-t_{2\ell}}\int_{(1-a)t_{2\ell}+at_{2\ell+1}}^{(1-b)t_{2\ell}+bt_{2\ell+1}} \frac{v^{T} h}{|h|}\,dt \\
    &= \frac{1}{t_{2\ell+1}-t_{2\ell}}\int_{\varphi((1-a)t_{2\ell}+at_{2\ell+1})}^{\varphi((1-b)t_{2\ell}+bt_{2\ell+1})} v^{T} P\mathfrak h(s)\,ds,\label{eq:fon0}
\end{align}
where $P$ has been introduced in \eqref{eq:Pnilp}.

In addition
\begin{equation} \label{eq:tp}
  t_{2\ell+1}-t_{2\ell} = \int_{\xi_{2\ell}}^{\xi_{2\ell+1}} |P\mathfrak  h(s)|\,ds.
\end{equation}

\begin{lemma}\label{lem8}
  Under the above assumptions, there exists a  unit vector $v_{\star}\in \R^{2}$ such that $u_\star (t)= v_{\star}$ for a.e.\  $t\in [0,1]$.  Moreover, $v_\star$ is parallel to $P(1,0)$.
\end{lemma}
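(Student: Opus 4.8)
The plan is to establish the two halves of the statement separately: first that $u_\star(t)$ is parallel to $P(1,0)$ for a.e.\ $t$, and then that length-minimality of the limit curve forces $u_\star$ to be a.e.\ equal to a single unit vector.

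\emph{Parallelism.} Fix a nonzero $v\in\R^2$ with $v^TP(1,0)=0$; since $\{v\}^\perp=\R\,P(1,0)$, it suffices to prove $v^Tu_\star=0$ a.e. By Lemma~\ref{lem:turning}, on $[\xi_{2\ell},\xi_{2\ell+1}]$ the angle $\theta$ ranges in $(\varepsilon_\ell,\pi-\varepsilon_\ell)$ modulo $2\pi$, so $\sin\theta>0$ there; writing $\mathfrak h=\rho e^{i\theta}=\rho\cos\theta\,(1,0)+\rho\sin\theta\,(0,1)$, we get $P\mathfrak h(s)=\rho\cos\theta\,P(1,0)+\rho\sin\theta\,P(0,1)$, hence $v^TP\mathfrak h(s)=(v^TP(0,1))\,\rho(s)\sin\theta(s)$. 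Inserting this in \eqref{eq:fon0}, bounding the integral by its modulus, and using $|P\mathfrak h|\ge\|P^{-1}\|^{-1}\rho$ together with \eqref{eq:tp}, one gets for every $[a,b]\subseteq[0,1]$
\[
\Bigl|\int_a^b v^Tu_\ell(\tau)\,d\tau\Bigr|\ \le\ \|P^{-1}\|\,|v^TP(0,1)|\ \frac{\int_{\xi_{2\ell}}^{\xi_{2\ell+1}}\rho\sin\theta\,ds}{\int_{\xi_{2\ell}}^{\xi_{2\ell+1}}\rho\,ds}\,.
\]
By \eqref{esti1} the numerator is $\le 2(1+2\varepsilon_\ell)\rho(\xi_{2\ell})$, and by \eqref{esti2} the denominator is $\ge (1-2\varepsilon_\ell)\varepsilon_\ell^{-1}\rho(\xi_{2\ell})$, so the right-hand side is $O(\varepsilon_\ell)$, which tends to $0$ as $\ell\to\infty$. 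Since $u_\ell\rightharpoonup u_\star$ in the weak-$\star$ topology, we deduce $\int_a^b v^Tu_\star=0$ for every $[a,b]$, i.e.\ $v^Tu_\star(t)=0$ for a.e.\ $t$; hence $u_\star(t)\in\R\,P(1,0)$ for a.e.\ $t\in[0,1]$.

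\emph{Norm and constancy.} As recalled just before the lemma, Lemma~\ref{lem:nilpot} applied with $a_\ell=t_{2\ell}$, $b_\ell=t_{2\ell+1}$ yields $|u_\star(t)|=1$ for a.e.\ $t$ and the fact that the curve $\gamma_\star:[0,1]\to\widehat M_{x}$ ($x=\gamma(t_1)$) with control $u_\star$ is a length-minimizer parametrized by arclength. Combining this with the parallelism, $u_\star(t)=\sigma(t)v_\star$ a.e., where $v_\star:=P(1,0)/|P(1,0)|$ is a fixed unit vector and $\sigma(t)\in\{-1,1\}$. Thus $\dot\gamma_\star(s)=\sigma(s)X_{v_\star}(\gamma_\star(s))$ with $X_{v_\star}:=(v_\star)_1\widehat X_1^{x}+(v_\star)_2\widehat X_2^{x}$ a single smooth vector field, so $\gamma_\star$ runs along one integral curve of $X_{v_\star}$, forward or backward according to the sign of $\sigma$. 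A length-minimizer cannot retrace part of a curve, so $\sigma$ is a.e.\ constant; this is exactly the argument of Case~(a) in the proof of Proposition~\ref{t:main2}, which relies on Lemma~\ref{lem:constantsign}. Therefore $u_\star\equiv v_\star$ or $u_\star\equiv-v_\star$ on $[0,1]$, in either case a unit vector parallel to $P(1,0)$, as asserted.

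The crux is the parallelism step, and it hinges on the scaling recorded in Lemma~\ref{lem:estimate0}: the component of $h$ transverse to $P(1,0)$ equals $(v^TP(0,1))\,\rho\sin\theta$ and stays of size $O(\varepsilon_\ell\rho(\xi_{2\ell}))$ uniformly along the whole passage, while the reparametrized length $t_{2\ell+1}-t_{2\ell}=\int|P\mathfrak h|\ge\|P^{-1}\|^{-1}\int\rho$ is of order $\varepsilon_\ell^{-1}\rho(\xi_{2\ell})$; this disparity is what makes the transverse part of $u_\ell$ average to zero in the weak-$\star$ limit over every subinterval at once, and is the only place where the fine estimates \eqref{esti1}--\eqref{esti2} are used. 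The remaining steps are then immediate, the constancy being a repetition of a situation already handled in Section~\ref{s:r2s4}.
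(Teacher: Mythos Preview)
Your proof is correct and follows essentially the same approach as the paper: you show $w^Tu_\star\equiv0$ for $w\perp P(1,0)$ by bounding the transverse component via $v^TP\mathfrak h=(v^TP(0,1))\rho\sin\theta$, extending the integral to the full interval $[\xi_{2\ell},\xi_{2\ell+1}]$ by positivity of $\rho\sin\theta$, and comparing \eqref{esti1} against \eqref{esti2} with $|P\mathfrak h|\ge\|P^{-1}\|^{-1}\rho$; then you conclude constancy via Lemma~\ref{lem:nilpot} and Lemma~\ref{lem:constantsign}, exactly as the paper does.
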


\begin{proof}
 Let  $v_\star,w_\star
\in\R^2$ be two
orthogonal unit vectors such that $v_\star$ is
parallel to $P(1,0)$. Notice that  $P^{T}w_\star$ is orthogonal to $(1,0)$, that is, it is parallel to $(0,1)$.
  We start by showing that $ w_\star^{T}u_\star(t)=0$ for a.e.\  $t\in [0,1]$.
  This amounts to showing that for all $0\le a<b\le 1$ it holds
  \begin{equation*}
   \frac{1}{t_{2\ell+1}-t_{2\ell}}\int_{\varphi((1-a)t_{2\ell}+at_{2\ell+1})}^{\varphi((1-b)t_{2\ell}+bt_{2\ell+1})} \mathfrak h_2(s)\,ds  \rightarrow 0 \qquad\text{as }\ell\to+\infty.
  \end{equation*}
  Since $\mathfrak h_2=\rho\sin\theta$ is positive on $[\xi_{2\ell},\xi_{2\ell+1}]$ by construction, using \eqref{eq:tp} it is enough to show that
\begin{equation*}
    \frac{\int_{\xi_{2\ell}}^{\xi_{2\ell+1}} \rho(s)\sin\theta(s)\,ds}{\int_{\xi_{2\ell}}^{\xi_{2\ell+1}} |P\mathfrak  h(s)|\,ds}\longrightarrow 0 \qquad\text{as }\ell\to+\infty.
  \end{equation*}
Since $|P\mathfrak  h(s)|\geq \|P^{-1}\|^{-1}\rho(s)$ for all $s$, the latter limit holds true according to \eqref{esti1} and \eqref{esti2} in Lemma~\ref{lem:estimate0}, applied to $\varepsilon=\varepsilon_\ell$  for $\ell\geq 0$.

Recall that the control $u_\star$  is minimizing  and $|u_\star(t)|= 1$ for a.e.\ $t\in [0,1]$. From what precedes, one deduces that $u_\star$ is almost everywhere perpendicular to  $w_\star$, hence equal to $v_\star$ or $-v_\star$. It then follows from  Lemma~\ref{lem:constantsign} in the appendix that, up to replacing $v_\star$ by $-v_\star$, the equality $u_\star(t)=v_{\star}$ holds for a.e.\ $t\in [0,1]$.
\end{proof}

Let $ \bar v\in \R^{2}$  be such that $P^{T} \bar v=(1,0)$. We have, according to Lemma~\ref{lem8},
\begin{equation*}
 \lim_{\ell\to\infty} \int_0^1   \bar v^{T} u_\ell(\tau)\,d\tau  = \int_0^1   \bar v^{T} u_\star(\tau) \,d\tau=  \bar v^{T}v_{\star} \neq 0.
\end{equation*}
We conclude the proof by contradiction by showing that the limit in the left-hand side is zero. Indeed, according to  \eqref{eq:fon0}, we have
\begin{equation*}
\left|\int_0^1   \bar v^{T} u_\ell(\tau)\,d\tau\right| =  \frac{\left|\int_{\xi_{2\ell}}^{\xi_{2\ell+1}} \rho(s)\cos\theta(s)\,ds \right|}{\int_{\xi_{2\ell}}^{\xi_{2\ell+1}} |P\mathfrak  h(s)|\,ds} \leq \|P^{-1}\|\frac{\left|  \int_{\xi_{2\ell}}^{\xi_{2\ell+1}} \rho(s)\cos\theta(s)\,ds \right|
 }{\int_{\xi_{2\ell}}^{\xi_{2\ell+1}} \rho(s)\,ds}.
 \end{equation*}
The right-hand side of the above equation tends to zero thanks to  \eqref{esti2} and \eqref{esti3} in Lemma~\ref{lem:estimate0} applied to $\varepsilon=\varepsilon_\ell$  for $\ell\geq 0$.

We have therefore proved that \ref{p-ii} in Lemma~\ref{lem:turning} cannot hold true, which
 completes the proof of Proposition~\ref{prop:h/h}.

\section{Proof of Theorem~\ref{t:main}}\label{proof-ex-conj}

Let $M$ be as in the statement of Theorem~\ref{t:main}.
Denote, as in the previous sections, by $\gamma:[0,T]\to M$ a length-minimizing trajectory parametrized by arclength and
by $\lambda:[0,T]\to T^*M$ an abnormal extremal lift  of $\gamma$.

 Proposition~\ref{prop:h/h}, together with Theorem~\ref{t:HLK},
  proves the $C^1$ regularity of $\gamma$ provided that $h$ vanishes only at isolated points.

We consider in this section the case where $t_0\in (0,T)$
is a density point of
$\{ t\in [0,T]\mid h(t)=0\}$.
We want to prove that
$u(t)$ (up to modification on a set of measure zero) has a limit as $t\uparrow t_0$ and as $t\downarrow t_0$.
By symmetry, we restrict our attention to the existence of
 the limit of $u(t)$ as    $t\uparrow t_0$.

We are going to consider separately the situations where $h\equiv 0$ on a left neighborhood of $t_0$
and where
there exists a sequence of maximal open intervals $(t_0^n,t_1^n)$ with
$h|_{(t_0^n,t_1^n)}\ne 0$
and  such that $t_1^n\rightarrow t_0$.

Assume for now on that $h\equiv 0$ on a left neighborhood $(t_0-\eta,t_0]$ of $t_0$. Then, since $\dot h =Au$ almost everywhere on $(t_0-\eta,t_0]$, we have that $u(t)$ belongs to $\ker A(t)$ for almost every $t$ in $(t_0-\eta,t_0]$.
By Lemma~\ref{l:dis}, moreover,
 $\ker A(t)$ is one-dimensional for every $t\in (t_0-\eta,t_0]$.

Fix an open neighborhood $V_{0}$ of $\lambda(t_0)$ in $T^{*}M$ such that there exists a smooth map $V_{0}\ni \lambda\mapsto v(\lambda) \in \mathbb S^{1}$ such that $v(\lambda(t))\in \ker A(t)$ if $\lambda(t)\in V_{0}$
and $t\in (t_0-\eta,t_0]$. Up to reducing $\eta$, we assume that $\lambda(t)\in V_{0}$ for every $t\in (t_0-\eta,t_0]$.
Notice that $\lambda|_{(t_0-\eta,t_0]}$ is a solution of the time-varying system
$$\dot \lambda = \sigma(t) \vec{X}_{v(\lambda)}(\lambda),$$
where $\sigma:(t_0-\eta,t_0]\to  \{-1,1\}$ is measurable. Hence, by length-minimality of $\gamma$ and by Lemma~\ref{lem:constantsign} in the appendix, either $u=v$ almost everywhere on $(t_0-\eta,t_0]$ or $u=-v$ almost everywhere on $(t_0-\eta,t_0]$. We conclude that $u$ is continuous on $(t_0-\eta,t_0]$ and the
proof in this case in concluded.

We are left to consider the case where
every left neighborhood of $t_0$ contains a maximal interval $(\tau_0,\tau_1)$ such that
$h\ne 0$ on $(\tau_0,\tau_1)$.

Notice that, by Proposition~\ref{prop:nonpos} and by continuity of $t\mapsto A(t)$, we have  that $\det A(t_0)\le 0$.

The case
$\det A(t_0)<0$ can be ruled out
thanks to the following lemma.

\begin{lemma}
Let $\det A(t_0)<0$.
There exists $\eta\in (0,t_0)$ such that, for any maximal interval $(\tau_0,\tau_1) \subset (0,t_0)$ on which
$h(t)\ne 0$,
then $\tau_0<t_0-\eta$.
\end{lemma}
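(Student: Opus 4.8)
The plan is to argue by contradiction, showing that if there were arbitrarily short maximal intervals $(\tau_0,\tau_1)\subset(0,t_0)$ on which $h\ne0$ with $\tau_1\to t_0$, then $\gamma$ would fail to be length-minimizing. First I would fix a maximal interval $(\tau_0,\tau_1)$ with $\tau_1$ close to $t_0$; since $\det A(t_0)<0$ and $A$ is continuous, we may assume $\det A(\tau_1)<0$ on all such intervals once $\eta$ is small enough. Now apply Proposition~\ref{prop:h/h} (together with Remark~\ref{r:eigenvectors}) at the right endpoint $\tau_1$: since we may take $u=h/|h|$ on $(\tau_0,\tau_1)$, the control $u(t)$ has a limit as $t\uparrow\tau_1$ equal to the unit eigenvector $v_-(\tau_1)$ of $A(\tau_1)$ associated with the negative eigenvalue. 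The key geometric point is that for $\det A<0$, the curve $h$ on $(\tau_0,\tau_1)$ is (a reparametrization of) a trajectory of the hyperbolic linear system $\dot z=Az$, hence stays in (a neighborhood of) the stable or unstable manifold; integrating $\dot h = A h/|h|$ as in Case (b) of the proof of Proposition~\ref{t:main2}, $h$ is essentially affine along an eigendirection, so the endpoint $\tau_1$ being a zero of $h$ forces $\tau_0$ to be bounded away from $\tau_1$ by a quantity controlled by $\det A(t_0)$ and the size of $h$.

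More precisely, the strategy is: after the similarity change of variables $P$ diagonalizing $A(\tau_1)$ and the time-rescaling $\varphi$ of Lemma~\ref{lem:s}, with $\mathfrak h=\rho e^{i\theta}$, the analysis of Subsection~\ref{s:det0}'s analogue (the $\det A(t_1)<0$ case) gives $\theta\to0\bmod\pi$ as $s\to+\infty$, i.e.\ $\mathfrak h$ aligns with the eigendirection; then the equation $(x_1^2-x_2^2)'/2 = -\alpha(x_1^2+x_2^2)+(\beta-\zeta)x_1x_2$ with $\alpha\to a>0$, $\beta,\zeta\to0$ shows $|\mathfrak h(s)|^2$ is comparable to $|\mathfrak h(s_*)|^2 e^{-2a(s-s_*)}$ up to lower order, whence $\rho$ decays geometrically in $s$. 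Pulling this back through $\varphi$, whose inverse satisfies $dt=|h|\,ds$, the total length of the interval $t_1-t_0^{\mathrm{int}}=\int |h|\,ds$ converges; but this length must be at least the sub-Riemannian distance travelled, which by abnormality and the structure of $h$ near an eigendirection is bounded below by a definite constant (depending on $\|h\|$ at an interior point and on $a$). Alternatively, and more cleanly: if the endpoint $\tau_1$ satisfies $h(\tau_1)=0$ while $h$ lies near an eigendirection of the hyperbolic matrix $A(\tau_1)$, the only way the trajectory of $\dot z = A z/|z|$ can reach $0$ is along the stable direction, and the ``time to reach $0$'' in the original parametrization is bounded below because the affine relation $h(t) = h(t_*) + (t-t_*)\lambda_- v_-$ (valid in the nilpotent case, and asymptotically valid here) shows $|h|$ is roughly linear in $t$, so $\tau_1 - \tau_0 \ge c$ for a uniform $c=c(\det A(t_0))>0$.

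The main obstacle I expect is controlling the \emph{non-constant} lower-order terms $\alpha,\beta,\zeta$ (equivalently $f,g$) uniformly as the interval $(\tau_0,\tau_1)$ shrinks toward $t_0$: one needs that the estimates in the $\det A<0$ analysis of Proposition~\ref{prop:h/h} hold with constants depending only on $\det A(t_0)$ and on $\sup$-norms of the $h_{ijk\ell}$'s on a fixed neighborhood of $t_0$, not on the particular interval. This is where equicontinuity / boundedness of the derivatives of $\alpha,\beta,\zeta$ (which are linear combinations of $h_{2112},h_{2212},h_{1112}$ via \eqref{eq:poisson}, hence Lipschitz on $[0,T]$) is essential, and where one must be careful that the time-rescaling $\varphi$ depends on the interval. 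I would handle this by noting that $|h|$ is bounded on $[0,T]$, so the reparametrization $s$ only dilates time, and by extracting the geometric-decay estimate uniformly from the ODE for $|\mathfrak h|^2$ using the fixed asymptotic values $\alpha\to a(\tau_1)\ge a_0>0$, $\beta,\zeta\to0$, with $a_0$ a uniform lower bound on $|$eigenvalue of $A|$ near $t_0$ coming from continuity of $\det A$ and $\det A(t_0)<0$. Once the uniform geometric decay of $\rho$ in $s$ and the bound $\varphi(\tau_1)-\varphi(\tau_0^{})<+\infty$ are in hand, a contradiction with $\tau_1\to t_0$, $\tau_0\to t_0$ follows, since the length $\tau_1-\tau_0=\int_{\varphi(\tau_0)}^{\varphi(\tau_1)}|P\mathfrak h|\,ds$ would then be bounded below by a positive constant independent of the interval, forcing $\tau_0<t_0-\eta$.
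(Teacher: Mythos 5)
Your proposal analyses only the right endpoint $\tau_1$ of the maximal interval $(\tau_0,\tau_1)$, but the contradiction the paper reaches uses the constraints at \emph{both} zeros $\tau_0$ and $\tau_1$ simultaneously. There are two difficulties. First, the uniform lower bound on $\tau_1-\tau_0$ you try to extract from the exponential decay of $|\mathfrak h|$ is circular: since $h(\tau_0)=h(\tau_1)=0$ and $\dot h=A\,h/|h|$ with $A$ uniformly bounded near $t_0$, one has $\sup_{[\tau_0,\tau_1]}|h|\le\|A\|_\infty\,(\tau_1-\tau_0)$, so the ``size of $h$ at an interior point'' on which your constant is supposed to depend shrinks proportionally with the interval. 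The affine-approximation variant suffers the same problem (as $t_*\to\tau_0$, $|h(t_*)|\to 0$), so no interval-independent bound emerges from these considerations.

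Second, and more fundamentally, you never use the left endpoint $\tau_0$. The paper applies Proposition~\ref{prop:h/h} together with Remark~\ref{r:eigenvectors} at both endpoints (at $\tau_0$ after a time reversal), yielding $h/|h|\to \pm v_+(\tau_0)$ as $\tau\downarrow\tau_0$ and $h/|h|\to\pm v_-(\tau_1)$ as $\tau\uparrow\tau_1$. The contradiction then follows from a cone-invariance argument: choosing $\eta,\theta_0$ so that for every $t\in[t_0-\eta,t_0]$ the cone $C_{\theta_0}(v_+(t_0))$ around the unstable eigendirection is positively invariant for $\dot x=A(t)\,x/|x|$ and disjoint from $C_{\theta_0}(v_-(t_0))$, a trajectory that approaches $\pm v_+(\tau_0)$ near $\tau_0$ is trapped in $C_{\theta_0}(v_+(t_0))$ for all later times and therefore can never align with $\pm v_-(\tau_1)$ near $\tau_1$. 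This qualitative two-endpoint incompatibility is the heart of the lemma and is entirely absent from your argument, so the proposal has a genuine gap.
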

\begin{proof}
As we have already seen in Section~\ref{s:intro_dyn}, on every interval
where $h(t)\neq 0$ we have that $u$ is smooth and equal to  either
$\frac{h(t)}{|h(t)|}$ or $-\frac{h(t)}{|h(t)|}$. Thus the function $h$ on $(\tau_0,\tau_1)$ is either a maximal solution to $\dot x=A(t)\frac{x}{|x|}$ or a maximal solution to $\dot x=-A(t)\frac{x}{|x|}$. Let us assume that it is a maximal solution of $\dot x=A(t)\frac{x}{|x|}$, the proof being identical in the second case.

For every $v\in \mathbb{R}^2\setminus\{0\}$ and every $\theta>0$ denote by $C_{\theta}(v)$ the cone of all vectors in
$\mathbb{R}^2\setminus\{0\}$ making an (unoriented) angle smaller than $\theta$ with $v$ or $-v$.

Let $\eta_0\in (0,t_0)$ be such that $\det(A(t))<0$  for every $t\in[t_0-\eta_0,t_0]$.
For $t\in[t_0-\eta_0,t_0]$, denote by $v_{-}(t)$ and
$v_{+}(t)$ two unit eigenvectors of $A(t)$, the first corresponding to a negative and the second to  a positive eigenvalue.

Let $\eta\in (0,\eta_0)$ and $\theta_0>0$ be such that $C_{\theta_0}(v_{+}(t_0))\cap C_{\theta_0}(v_{-}(t_0))= \emptyset$ and
$v_\pm(t)\in C_{\theta_0}(v_{\pm}(t_0))$
for every $t\in [t_0-\eta,t_0]$.
Notice that, for every fixed $\bar t\in [t_0-\eta,t_0]$, the vector field $x\mapsto  A(\bar t)x$  points inward $C_{\theta_0}(v_{+}(t_0))$ at every nonzero point of its boundary (see Figure~\ref{fig:hyperbolics}). Hence $C_{\theta_0}(v_{+}(t_0))$
is positively invariant for the dynamics of $\dot x=A( t)\frac{x}{|x|}$ on $[t_0-\eta,t_0]$.

\begin{figure}
\includegraphics[scale=.07]{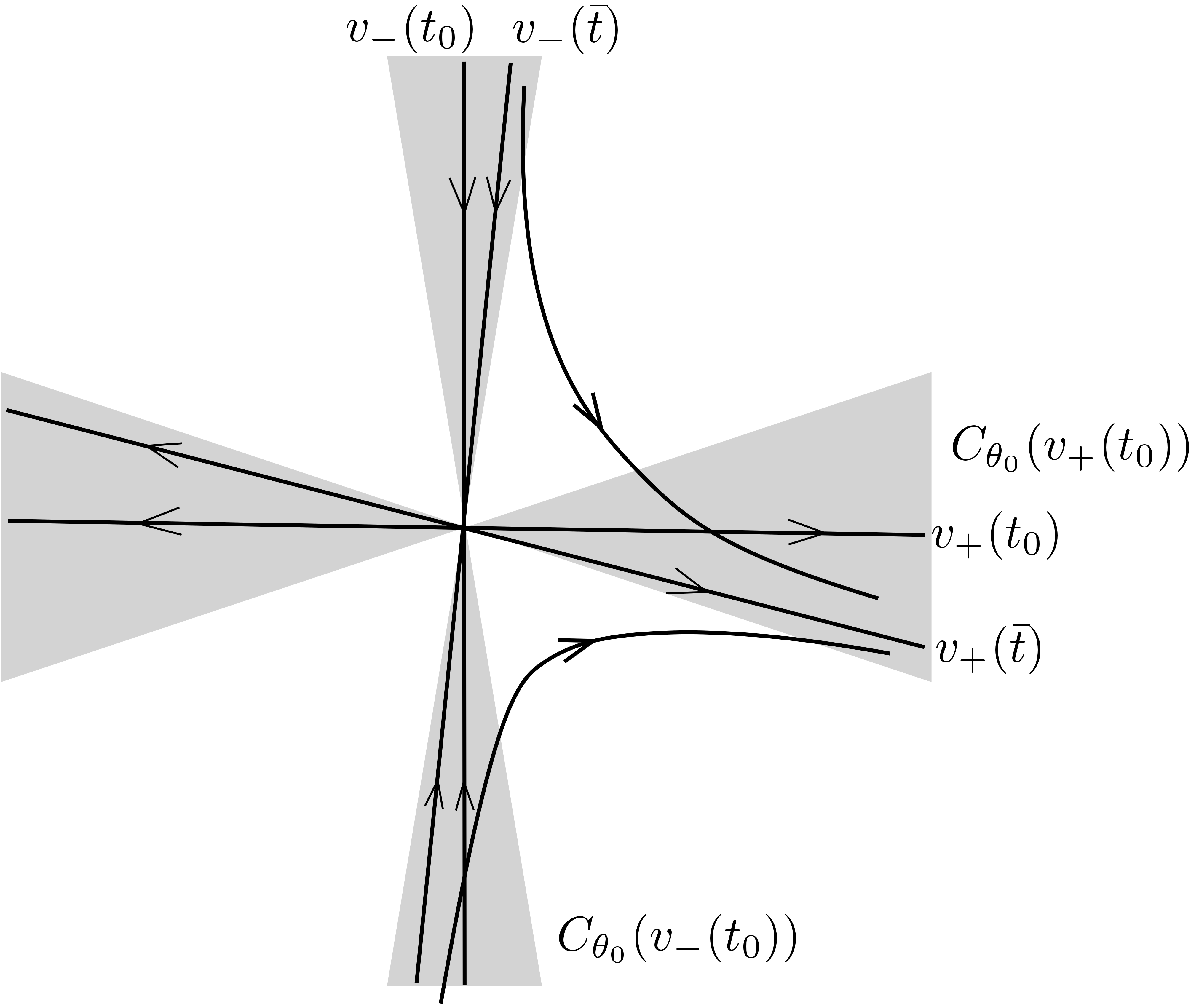}
\caption{Phase portrait of $\dot x=A(\bar t)x$ for $\bar{t}\in [t_0-\eta,t_0]$}
\label{fig:hyperbolics}
\end{figure}

In order to prove the statement, we argue by contradiction. Assume that
$h:(\tau_{0},\tau_{1})\to \R^{2}\setminus \{0\}$ is a maximal solution
 of $\dot x=A(t)\frac{x}{|x|}$ with $(\tau_{0},\tau_{1})\subset (t_0-\eta,t_0)$. Then $h(\tau)$ tends to $0$ as $\tau$ tends to $\tau_0$ or $\tau_1$ and it follows from Proposition~\ref{prop:h/h} that $\frac{h(\tau)}{|h(\tau)|}$ converges to an eigenvector of $A(\tau_0)$ as $\tau\downarrow\tau_0$ and to an eigenvector of $A(\tau_1)$ as $\tau\uparrow\tau_1$. More precisely, from Remark~\ref{r:eigenvectors} there holds
$$\lim_{\tau\downarrow\tau_0}\frac{h(\tau)}{|h(\tau)|}\to \pm v_{+}(\tau_0),\quad \lim_{\tau\uparrow\tau_1}\frac{h(\tau)}{|h(\tau)|}\to \pm v_{-}(\tau_1).$$
This contradicts the positive invariance of
$C_{\theta_0}(v_{+}(t_0))$ for the equation $\dot x=A(t)\frac{x}{|x|}$ on $(\tau_{0},\tau_{1})$.
\end{proof}

In the case
$\det A(t_0)=0$ the proof follows the steps of the construction of Section~\ref{s:det0}. In particular, let $P\in {\rm GL}(2,\R)$ be such that
\begin{equation*}
		  P^{-1}A(t)P = \left(
		\begin{array}{cc}
			-a(t) & b(t) \\
			c(t) & a(t)
		\end{array}
		\right),
	\end{equation*}
	where $a,b,c$ are affine combinations of $h_{2112}, h_{2212}$, and $h_{1112}$
	with $a\rightarrow 0$, $b\rightarrow 1$, and $c\rightarrow 0$ as $t\to t_0$. 	
Let
\begin{equation}\label{eq:hrw}
P^{-1}h(t)=r(t)e^{i\omega(t)},
\end{equation}
 with $\omega(t)$ uniquely defined modulus $2\pi$ only when $h(t)\ne 0$.
	
The crucial point is the following counterpart to
Lemma~\ref{lem:turning}, whose proof can be obtained using exactly the same arguments.

\begin{lemma}\label{lem:turning-second}
  We have the following dichotomy:
  \begin{enumerate}[label=\rm(\roman*)]
   \item\label{p-iii2}     for any $0<\varepsilon<\pi/2$, for $\eta$ small enough,
 $|\sin(\omega(t))|<\varepsilon$ for all $t\in (t_0-\eta,t_0)$ such that $h(t)\ne0$;
    \item\label{p-ii2}     for any $0<\varepsilon<\pi/2$
    there exists an increasing sequence $(t_n)_{n\in\mathbb N}$ in $(0,t_0)$
    tending to $t_0$ and such that
    \begin{gather*}
      \omega(t_{2n}) = \pi-\varepsilon \mod 2\pi, \qquad \omega(t_{2n+1}) = \varepsilon \mod 2\pi,\\
     h(t)\ne 0,\ \sin(\omega(t))>0,\ \dot\omega(t)<0 \quad \forall t\in [t_{2n},t_{2n+1}]
    \end{gather*}
 or
   \begin{gather*}
      \omega(t_{2n}) = -\varepsilon \mod 2\pi, \qquad \omega(t_{2n+1}) = \varepsilon-\pi  \mod 2\pi,\\
      h(t)\ne 0,\ \sin(\omega(t))<0,\  \dot\omega(t)<0 \quad \forall t\in [t_{2n},t_{2n+1}]
    \end{gather*}
    holds true.
 \end{enumerate}
\end{lemma}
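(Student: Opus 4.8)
The plan is to mimic the proof of Lemma~\ref{lem:turning}, transferring the analysis from the rescaled $s$-variable back to the original $t$-variable, and taking care of the new feature that $h$ may vanish on parts of $(t_0-\eta,t_0)$. First I would write the polar dynamics for $P^{-1}h(t)=r(t)e^{i\omega(t)}$ coming from $\dot h = A(t)\frac{h}{|h|}$ (or its negative, treated identically): on every maximal subinterval of $(t_0-\eta,t_0)$ where $h\ne0$, using $a\to0$, $b\to1$, $c\to0$ and setting $\mu:=c+b\to1$, one gets
\begin{equation*}
|h(t)|\,\dot\omega(t) = -\sin^2\omega(t) + g(t),\qquad g(t):= a\sin2\omega + c + (1-\mu)\sin^2\omega \to 0 \text{ as } t\to t_0 .
\end{equation*}
The key qualitative fact is the same as before: the unperturbed equation $\dot\omega = -\sin^2\omega$ (after the positive time change by $|h|$) has phase portrait on $\mathbb S^1$ consisting of the equilibria $0$ and $\pi$ joined by two clockwise heteroclinics, so on any interval where $|\sin\omega|$ stays bounded away from $0$ the quantity $\dot\omega$ is bounded away from $0$ with a definite sign, as long as $|h|$ is bounded (which it is).

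Next I would run the dichotomy argument. Suppose \ref{p-iii2} fails, i.e.\ there is $c>0$ and a sequence $t\uparrow t_0$ with $h(t)\ne0$ and $|\sin\omega(t)|>c$. Fix $\varepsilon$ with $\sin\varepsilon\in(0,c)$ and $\eta$ small enough that $g$ is small on $(t_0-\eta,t_0)$, say $|g|<\varepsilon^2/2$ there, so that $\dot\omega<0$ whenever $h\ne0$ and $|\sin\omega|>\sin\varepsilon$. Starting from a time $q_1$ in this neighborhood with $|\sin\omega(q_1)|>c$, follow the flow: $\omega$ decreases monotonically; it cannot stop before reaching $\{|\sin\omega|=\sin\varepsilon\}$ (since $h$ stays $\ne 0$ as long as $|\sin\omega|$ is bounded away from $0$, because $h$ can only vanish at the endpoints of a maximal interval and at such an endpoint $\omega$ would have to have reached $0\bmod\pi$ — here I would invoke Remark~\ref{r:eigenvectors}/Proposition~\ref{prop:h/h} to control the limit of $h/|h|$, or more elementarily the fact that on a maximal interval $h$ is a maximal solution of $\dot x = A(t)x/|x|$ and cannot blow up in finite time so it can only leave $\R^2\setminus\{0\}$ through $0$). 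By the defining sequence there is a later time $q_2$ with $|\sin\omega(q_2)|>c$ again, and the total decrease of $\omega$ between two consecutive such passages is exactly $\pi$; iterating gives $\omega\to-\infty$, and extracting the prescribed times $\omega=\varepsilon$ or $\omega=\pi-\varepsilon$ (resp.\ $\omega=-\varepsilon$, $\omega=\varepsilon-\pi$) $\bmod 2\pi$ along the monotone decrease produces the sequence $(t_n)$ in \ref{p-ii2}, with $\dot\omega<0$ and the stated sign of $\sin\omega$ on each $[t_{2n},t_{2n+1}]$. The two sub-cases in \ref{p-ii2} simply record whether $\sin\omega$ is positive or negative on the stretch where $\omega$ passes through the interval $(\varepsilon-\pi,\varepsilon)$ modulo $2\pi$; passing to a subsequence fixes one of the two.

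The main obstacle, compared to Lemma~\ref{lem:turning}, is precisely the bookkeeping around the zeros of $h$: one must be sure that once $|\sin\omega(q_1)|$ is large, the trajectory genuinely sweeps an angle $\pi$ in $\omega$ inside $(t_0-\eta,t_0)$ without $h$ vanishing prematurely, so that $\omega$ is continuously defined throughout the sweep and the monotonicity can be integrated; this is where one uses that a zero of $h$ forces $h/|h|$ to approach an eigenvector of $A$ at that time (Proposition~\ref{prop:h/h}), which for the nilpotent matrix $P^{-1}A(t_0)P=\begin{psmallmatrix}0&1\\0&0\end{psmallmatrix}$ means $\omega\to 0\bmod\pi$, incompatible with $|\sin\omega|$ being bounded away from $0$. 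Everything else — the perturbation estimates, the definite sign and lower bound on $|\dot\omega|$ away from the equilibria, the extraction of $(t_n)$ — is verbatim the argument of Lemma~\ref{lem:turning} with $s$ replaced by $t$ and an extra harmless factor $|h(t)|>0$ in the speed of $\omega$.
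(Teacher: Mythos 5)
Your overall plan is sound: write the unrescaled polar dynamics $|h|\,\dot\omega=-\sin^2\omega+g$ with $g\to0$, observe that $\dot\omega<0$ is uniform where $|\sin\omega|\ge\sin\varepsilon$, and show that each ``excursion'' of $\omega$ out of a small neighbourhood of $0\bmod\pi$ must consist of a full monotone sweep across the band $\{|\sin\omega|\ge\sin\varepsilon\}$ contained in a single maximal interval of $\{h\ne0\}$. You also correctly pinpoint the new difficulty compared with Lemma~\ref{lem:turning}: the zeros of $h$ break the domain of $\omega$ into pieces, so one must rule out premature vanishing of $h$ during a sweep.

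There is, however, a genuine gap in the extraction of the sequence $(t_n)$. Starting from a time $q_1$ where $|\sin\omega(q_1)|>c$, your argument follows the \emph{forward} flow and uses Proposition~\ref{prop:h/h} at the \emph{right} endpoint $\tau_1$ of the maximal interval $J$ containing $q_1$ to conclude that $\omega$ must first cross $\{\omega=\varepsilon \bmod 2\pi\}$ (or $\{\omega=\varepsilon-\pi\bmod 2\pi\}$). That produces the times $t_{2n+1}$. But the statement also requires a time $t_{2n}<q_1$ in the same interval $J$ with $\omega(t_{2n})=\pi-\varepsilon\bmod 2\pi$ (or $-\varepsilon\bmod 2\pi$), and this requires controlling the \emph{backward} flow toward the \emph{left} endpoint $\tau_0$ of $J$. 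Proposition~\ref{prop:h/h}, as stated, applies only to the right endpoint, so it does not by itself justify the step ``at such an endpoint $\omega$ would have to have reached $0\bmod\pi$'' on the left side. Relatedly, the assertion ``iterating gives $\omega\to-\infty$'' is ill-posed here, since $\omega$ is not globally defined across the zeros of $h$; only the excursions within each maximal interval are meaningful (and, indeed, the paper's statement of \ref{p-ii2} carefully avoids any claim about winding).

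The missing piece is short. Either (a) state and use the time-reversed analogue of Proposition~\ref{prop:h/h} at left endpoints (the proof is identical after reversing time, which replaces $u$ by $-u$ and $A$ by $-A$, cf.~Remark~\ref{r:eigenvectors}); or (b) use the following elementary variation argument, which avoids Proposition~\ref{prop:h/h} altogether at that end. By the same reasoning as in Lemma~\ref{lem:s}\,(i), $\int_{\tau_0}^{q_1}\frac{dt}{|h(t)|}=+\infty$. If $\omega$ did not cross $\pi-\varepsilon$ backward in time before $\tau_0$, then $|\sin\omega(t)|\ge\sin\varepsilon$ for all $t\in(\tau_0,q_1]$, so $|\dot\omega|\ge\frac{\sin^2\varepsilon-|g|}{|h|}\ge\frac{\sin^2\varepsilon}{2|h|}$ there (for $\eta$ small), giving $\int_{\tau_0}^{q_1}|\dot\omega|=+\infty$ while $\omega$ is confined to a bounded interval, a contradiction. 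With either fix, each maximal interval in which $|\sin\omega|$ exceeds $c$ somewhere contains a full monotone crossing of the band, and a subsequence extraction (to fix the sign of $\sin\omega$) yields $(t_n)$ as required. The remaining points you list (accumulation of the sweeps at $t_0$, the two sign alternatives) are correct.
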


Case \ref{p-ii2} can be excluded by similar computations as in Section~\ref{s:det0}, since it contradicts the optimality of $\gamma$.

Consider now case \ref{p-iii2}.
Let  $v_\star,w_\star
\in\R^2$ be two
orthogonal unit vectors such that $v_\star$ is
parallel to $P(1,0)$.
 According to \eqref{eq:hrw},  if $\sin(\omega(t))=0$ and $r(t)\ne 0$, then $u(t)=h(t)/|h(t)|$ is equal to $v_\star$ or $-v_\star$.
For every $\eta\in (0,t_0)$ we set
   \[ I^+_\eta=\{t\in (t_0-\eta,t_0) \mid v_\star^Tu(t)>0\}, \quad I^-_\eta=\{t\in(t_0-\eta,t_0)  \mid v_\star^Tu(t)<0\}.\]
Property \ref{p-iii2} implies that, for $\eta$ small, $I^+_\eta\cup I^-_\eta$ contains
$\{t\in (t_0-\eta,t_0)\mid h(t)\ne 0\}$.
Moreover, if $t_0$ is a density point for $I=I^+_\eta\cap \{t\in (t_0-\eta,t_0)\mid h(t)\ne 0\}$ (respectively, $I=I^-_\eta\cap \{t\in (t_0-\eta,t_0)\mid h(t)\ne 0\}$), then,
\begin{equation*}
    \lim_{t\in I,t\rightarrow t_0} u(t) = v_\star
    \qquad
    (\text{respectively, }   \lim_{t\in I,t\rightarrow t_0} u(t) = -v_\star).
\end{equation*}

Let $\Phi_\eta=\{t \in (t_0-\eta,t_0)\mid h(t)=0\}$. For almost every $t\in \Phi_\eta$,  $u(t)$ is in the kernel of $A(t)$ and $|u(t)|=1$.
Notice that, if $t_0$ is a density point for $J=\{t\in (0,t_0)\mid \ker A(t)\ne (0)\}$, then the kernel of $A(t)$ converges to the kernel of $A(t_0)$
as $t\in J,\;t\rightarrow t_0$.  By construction of $P$, moreover, $\ker(A(t_0))=\mathrm{span}(P(1,0))=\mathrm{span}(v_\star)$.
Hence, for $\eta$ small enough,
almost every  $t\in \Phi_\eta$
is in $I^+_\eta\cup I^-_\eta$.

To summarize, for $\eta$ small enough, $I^+_\eta\cup I^-_\eta$ has full measure in $(t_0-\eta,t_0)$.
Moreover,
\begin{equation}\label{eq:2lims}
\lim_{t\in I^+_\eta,\;t\rightarrow t_0}u(t)=v_\star,\qquad \lim_{t\in I^-_\eta,\;t\rightarrow t_0}u(t)=-v_\star.
\end{equation}
We next prove
that $u$ converges either to $v_\star$ or to $-v_\star$ as $t\rightarrow t_0$ by
showing that, for $\eta$ small enough, either $I^{+}_\eta$ or $I^{-}_\eta$ has  measure zero.

Suppose by contradiction that there exists a sequence of intervals $(\tau_0^n,\tau_1^n)$ in $(0,t_0)$ such that $\tau_0^n,\tau_1^n\rightarrow t_0$ as $n\to\infty$ and
both $|(\tau_0^n,\tau_1^n)\cap I^+|$  and $|(\tau_0^n,\tau_1^n)\cap I^-|$ are positive, where $|\cdot|$ denotes the Lebesgue measure and $I^\pm=\{t\in (0,t_0)\mid \pm v_\star^Tu(t)>0\}$. Moreover, up to restricting $(\tau_0^n,\tau_1^n)$, we can assume that
\begin{equation}\label{moite-moite}
|(\tau_0^n,\tau_1^n)\cap I^+|=|(\tau_0^n,\tau_1^n)\cap I^-|>0.
\end{equation}
This can be seen, for instance, by considering a continuous deformation of an interval around a
Lebesgue point of $(\tau_0^n,\tau_1^n)\cap I^+$ towards an interval around a
Lebesgue point of $(\tau_0^n,\tau_1^n)\cap I^-$.

For every $n\in\mathbb{N}$, let  $u_n\in L^\infty([0,1],\mathbb{R}^2)$ be defined by $u_n(\tau)=u(\tau_0^n+\tau(\tau_1^n-\tau_0^n))$.
Up to extracting a subsequence, $u_n $  weakly-$\star$  converges to some $u_\star$.
Condition \eqref{moite-moite} and the limits in \eqref{eq:2lims} imply that
\begin{equation}\label{eq:intzero}
\int_0^1 u_\star(t)dt=0.
\end{equation}

Thanks to \eqref{eq:2lims} we also have that
$w_\star^T u_n$
$L^\infty$-converges   to zero  as $n\to \infty$. In particular,
$w_\star^T u_\star\equiv 0$.
By Lemma~\ref{lem:nilpot}, $u_\star$ is optimal and $v_\star^T u_\star$ has values in $\{-1,1\}$. Hence,  by Lemma~\ref{lem:constantsign} in the appendix, $v_\star^T u_\star$ is constantly equal to $+1$ or $-1$.
This contradicts \eqref{eq:intzero} and the proof is concluded.
\qed

\appendix

\section{An elementary lemma}
\label{s:app}

\begin{lemma}\label{lem:constantsign}
Let $(M,\distr,g)$ be a sub-Riemannian manifold.
Let $V$ be a Lipschitz continuous vector field on $T^*M$ such that $\pi_* V(\lambda)\in \distr_{\pi(\lambda)}\setminus\{0\}$ for every $\lambda\in T^*M$.
 Let $\lambda:[0,T]\to T^*M$ satisfy $\dot\lambda(t)=\sigma(t)V(\lambda(t))$
with $\sigma\in L^\infty([0,T],[-1,1])$. Assume that $\gamma=\pi\circ \lambda :[0,T]\to M$ is a length-minimizer. Then $\sigma$ has constant sign, i.e., either $\sigma\ge 0 $ a.e.\ on $[0,T]$ or  $\sigma\le 0$ a.e.\ on $[0,T]$.
\end{lemma}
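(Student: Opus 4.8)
The plan is to show that, up to reparametrization and orientation reversals, $\gamma$ traces a single immersed integral curve of $V$, and that whenever $\sigma$ changes sign this forces $\gamma$ to retrace part of itself, contradicting length-minimality. Throughout, set $\tau(t)=\int_0^t\sigma(r)\,dr$, a Lipschitz function with $\dot\tau=\sigma$ a.e., and let $[s_-,s_+]:=\tau([0,T])\ni 0$.

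First I would establish the factorization $\lambda(t)=\Phi_{\tau(t)}(\lambda(0))$, where $\Phi_s$ denotes the local flow of $V$. Indeed, both $t\mapsto\lambda(t)$ and $t\mapsto\Phi_{\tau(t)}(\lambda(0))$ solve the Carath\'eodory Cauchy problem $\dot y(t)=\sigma(t)V(y(t))$, $y(0)=\lambda(0)$; since $V$ is Lipschitz in the state variable, solutions are unique, so the two curves coincide on the intersection of their domains, and a standard continuation argument — using that $\lambda([0,T])$ is compact and that $[s_-,s_+]$ is a compact interval — shows that $\Phi_s(\lambda(0))$ is in fact defined for all $s\in[s_-,s_+]$ and that the identity holds on all of $[0,T]$. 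Consequently $c(s):=\pi(\Phi_s(\lambda(0)))$ is a $C^1$ curve on $[s_-,s_+]$ with $c'(s)=\pi_*V(\Phi_s(\lambda(0)))\in\distr_{c(s)}\setminus\{0\}$, and $\gamma(t)=c(\tau(t))$ for every $t\in[0,T]$.

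Next I would rewrite the length in terms of $\tau$. Since $g$ is positive definite on each $\distr_x$ (the infimum defining $g_x$ is taken over a closed affine subspace of $E_x$ not containing the origin), the function $\phi(s):=\sqrt{g_{c(s)}(c'(s),c'(s))}$ is continuous and strictly positive on $[s_-,s_+]$. From $\dot\gamma(t)=\sigma(t)\,c'(\tau(t))$ and the chain rule, writing $L(s):=\int_0^s\phi(r)\,dr$ (a strictly increasing $C^1$ function), one gets
\[
\ell(\gamma)=\int_0^T|\sigma(t)|\,\phi(\tau(t))\,dt=\int_0^T|(L\circ\tau)'(t)|\,dt=\mathrm{TV}_{[0,T]}(L\circ\tau)\ge |L(\tau(T))|,
\]
with equality if and only if $L\circ\tau$ is monotone, equivalently $\tau$ is monotone, equivalently $\sigma$ has constant sign.

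Finally, let $\tilde\gamma$ be the curve that runs monotonically along $c$ from $c(0)=\gamma(0)$ to $c(\tau(T))=\gamma(T)$, parametrized, say, by $g$-arclength: its velocity is a unit multiple of $c'\in\distr$, hence $\tilde\gamma$ is horizontal, and $\ell(\tilde\gamma)=|L(\tau(T))|$. Since $\gamma$ is a length-minimizer between the same endpoints, $\ell(\gamma)\le\ell(\tilde\gamma)=|L(\tau(T))|\le\ell(\gamma)$, so equality holds in the displayed inequality, whence $\tau$ is monotone and $\sigma$ has constant sign, as claimed. The only point requiring genuine care is the continuation argument behind the factorization $\lambda(t)=\Phi_{\tau(t)}(\lambda(0))$, since $\tau$ may oscillate and the range $[s_-,s_+]$ must be covered by the flow; everything else is routine. (One can also bypass the flow and argue by excision: whenever $\tau(t_1)=\tau(t_2)$ with $t_1<t_2$, uniqueness on $[t_1,t_2]$ gives $\lambda(t_1)=\lambda(t_2)$, hence $\gamma(t_1)=\gamma(t_2)$, and excising the intervening loop strictly shortens $\gamma$ unless it was constant there.)
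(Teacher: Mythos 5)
Your proof is correct and follows essentially the same strategy as the paper's (much terser) argument: use the flow $\Phi_s$ of $V$ to write $\lambda(t)=\Phi_{\tau(t)}(\lambda(0))$, note that the endpoint depends only on $\kappa=\tau(T)$, and compare $\gamma$ against the monotone reparametrization $s\mapsto\pi\circ\Phi_{s\kappa}(\lambda(0))$. The paper simply asserts the resulting length inequality; your total-variation computation $\ell(\gamma)=\mathrm{TV}_{[0,T]}(L\circ\tau)\ge|L(\tau(T))|$ supplies the quantitative justification that the paper leaves implicit.
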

\begin{proof}
Set $\kappa=\int_0^T\sigma(t)dt$ and notice that $\lambda(T)=e^{\kappa V}(\lambda(0))$.
If $\sigma$ does not have constant sign,
then $[0,1]\ni t\mapsto \pi \circ e^{t \kappa V}(\lambda(0))$ is a curve connecting $\gamma(0)$ to $\gamma(T)$ and having length smaller than $\gamma$.
\end{proof}

A particular case of the lemma occurs when $V=\vec{H}$ is the Hamiltonian vector field on $T^*M$ associated with the Hamiltonian $\lambda\mapsto \langle \lambda, X(\pi (\lambda))\rangle$, where $X$ is  a smooth horizontal never-vanishing vector field on $M$.
This means that if a solution of $\dot\gamma(t)=\sigma(t)X(\gamma(t))$ is a length-minimizer then $\sigma$ has constant sign.

\bibliographystyle{alphaabbr}
\bibliography{biblio}%

\end{document}